\documentclass[12pt]{article}
\usepackage{amssymb}
\usepackage{amsmath}
\usepackage{amsfonts}
\usepackage{amsthm}
\usepackage{xcolor}
\usepackage{float}
\usepackage{hyperref}

\date{}

\newtheorem{Theorem}{Theorem}[section]
\newtheorem{Lemma}{Lemma}[section]
\newtheorem{Remark}{Remark}[section]
\newtheorem{Corollary}{Corollary}[section]
\newtheorem{Definition}{Definition}[section]

\numberwithin{equation}{section} \theoremstyle{plain}

\def\R{{\textbf{R}}}

\def\p{\partial}

\def\R{\mathbb R}
\def\N{\mathbb N}

\def\d{\mathrm{d}}

\def\va{\varepsilon}

\title { Threshold phenomena for least energy solutions of a doubly critical Neumann problem with a critical absorption term
	 \thanks {The research was supported  by   National
Key R\&D Program of China (No. 2023YFA1010002) and the Natural Science Foundation of China  (No.12271196, No. 12101530), the Natural Science Foundation of Henan Province (No. 262300421235), Sponsored by  the Program for Science \& Technology Innovation Talents in Universities of Henan Province (No. 26HASTIT040).}
	 }

\author{Yinbin Deng$^{1,}$ $^2$, Yulin Shi $^{1,}$\thanks{Corresponding author:
 shiyulin@mails.ccnu.edu.cn} \ and Pengyan Wang$^2$    \\
 {\small
    $^1$Key Laboratory of Nonlinear Analysis and Applications,
  }  \\
  {\small School of Mathematics and Statistics,   Central China Normal University,     Wuhan, China} \\
    {\small
      $^2$ School of Mathematics and Statistics, Xinyang Normal University,  Xinyang, China    } \\
    }

\begin{document}
	\baselineskip= 15pt
	\maketitle
	
	\label{firstpage}
	\date{}
	\maketitle

		\begin{abstract}
					
		We investigate the existence and nonexistence of least energy solutions for a doubly critical Neumann problem with a critical absorption term:
		\begin{equation}\label{eq1.0.100}
			\begin{cases}
				-\Delta u + \lambda u = u^{2^* - 1}  -\alpha u^{2^\sharp-1}   &\text{in }\Omega,\\
				u>0   &\text{in }\Omega, \\
				\nabla u \cdot \nu =  u^{2^\sharp -1}  &\text{on }\p\Omega,
				\end{cases}
			\end{equation}
		where $\Omega\subset\mathbb{R}^n$ is a smooth bounded domain with $n\ge5$,
		$\lambda>0$, $\alpha\ge0$, and 
		$2^*=\frac{2n}{n-2}$, $2^\sharp=\frac{2(n-1)}{n-2}$ denote the critical Sobolev exponent and the critical trace exponent, respectively. The simultaneous presence of the interior and boundary critical exponents together with this critical absorption term creates a new interaction between competing concentration mechanisms. In particular, the absorption term has the same critical trace exponent as the boundary nonlinearity, but with the opposite sign and acting in the interior of $\Omega$. Consequently, it competes with the boundary mean curvature correction at the same asymptotic order in the energy expansion, leading to a sharp threshold phenomenon.
		
		Combining several analytic  techniques and geometric tools, we  prove the existence
			of a threshold value $\alpha_{0}=\alpha_{0}(\lambda,\Omega)\in(0,+\infty)$
			such that \eqref{eq1.0.100} admits a least energy solution if
			$\alpha<\alpha_{0}$,  and no least energy solution if
			$\alpha>\alpha_{0}$.
		  Moreover,   \eqref{eq1.0.100}  also has a least energy solution at $\alpha =\alpha_0$ provided $\alpha_0> C(n)\underset{\partial \Omega}{\max} H$, where  $C(n)$  is a positive constant depending only on $n$ and $H$ is the mean
		curvature on $\partial\Omega$.
		
		\

			{\bf Keyword:}\ \  Neumann problem; Critical Sobolev exponents; Critical trace exponent; Least energy solutions.
		
		{\bf 2000 MR Subject Classification:} \ 35Q35; 35B33; 76D05.
		\end{abstract}
	
	\bigskip
	
	\section{Introduction and main results}
	In this paper,	 we are concerned with the doubly critical Neumann problem
	\begin{equation}\label{eq:main}
		\begin{cases}
			-\Delta u+\lambda u=u^{2^{*}-1}-\alpha u^{2^{\sharp}-1}
			& \text{in }\Omega,\\
             u>0                                                  & \text{in }\Omega,\\
			\frac{\partial u}{\partial\nu}=u^{2^{\sharp}-1}                   & \text{on }\partial\Omega,
		\end{cases}
	\end{equation}
	where $\lambda>0$ is fixed, $\alpha\ge 0$ is a perturbation parameter, and
	\[
	2^{*}=\frac{2n}{n-2},\qquad
	2^{\sharp}=\frac{2(n-1)}{n-2}
	\]
	are  the critical Sobolev exponents for the embedding
	$H^{1}(\Omega)\hookrightarrow L^{2^{*}}(\Omega)$ and the critical trace
	exponent for $H^{1}(\Omega)\hookrightarrow L^{2^{\sharp}}(\partial\Omega)$  respectively. Our main result establishes a sharp threshold
	$\alpha_{0}=\alpha_{0}(\lambda,\Omega)\in(0,+\infty)$ governing existence and
	nonexistence of least energy solutions. Throughout this paper, $\Omega\subset\R^n$ denotes a smooth bounded  domain
		with  $n\ge 5$, and $\nu$ stands for the
		unit outward normal field on $\partial\Omega$.

	Problem~\eqref{eq:main} arises naturally from conformal geometry.
	A central question in conformal geometry is whether a compact
	Riemannian manifold $(M,g)$ of dimension $n\ge 3$ with boundary
	admits a conformal metric $\widetilde g=u^{4/(n-2)}g$ realizing
	constant scalar curvature in the interior and constant mean
	curvature on the boundary simultaneously --
	the Escobar-Yamabe problem~\cite{Escobar1990,Escobar1996}.
The conformal transformation rules
\begin{equation}\label{eq:conf_transform}
	R_{\widetilde g}
	=u^{-\frac{n+2}{n-2}}
	\!\left(-\frac{4(n-1)}{n-2}\Delta_{g}u+R_{g}u\right),
	\qquad
	h_{\widetilde g}
	=u^{-\frac{n}{n-2}}
	\!\left(\frac{2}{n-2}\frac{\partial u}{\partial\nu}+h_{g}u\right)
\end{equation}
show that prescribing $R_{\widetilde g}=K$ and
	$h_{\widetilde g}=H$ reduces to finding a positive solution of
\begin{equation}
	\label{eq:yamabe}
	\begin{cases}
		\displaystyle
		-\frac{4(n-1)}{n-2}\Delta_{g}u
		+R_{g}u=K\,u^{2^{*}-1}
		& \text{in }M,\\[6pt]
		\displaystyle
		\frac{2}{n-2}\frac{\partial u}{\partial\nu}+h_{g}u
		=H\,u^{2^{\sharp}-1}
		& \text{on }\partial M.
		\end{cases}
	\end{equation}
The two critical exponents arise from geometrically distinct sources:
the Sobolev exponent $2^{*}=2n/(n-2)$ is dictated by the interior
scalar-curvature equation, while the trace-Sobolev exponent
$2^{\sharp}=2(n-1)/(n-2)$ is dictated by the boundary
mean-curvature equation.

			Han--Li~\cite{HanLi2000} established existence
			for~\eqref{eq:yamabe} on locally conformally flat manifolds with
			umbilic boundary and on manifolds of positive type with at least one
			non-umbilic boundary point. In their proof, the non-umbilic condition
			is the geometric mechanism that forces the mountain-pass level
			strictly below the threshold~$S_c$.

Neumann problems are of intrinsic interest in mathematical biology,
arising as steady states of the Keller--Segel chemotactic aggregation
model~\cite{KellerSegal,LNT} and as the shadow system of various
activator--inhibitor reaction--diffusion
models~\cite{GiererMeinhardt,lN}.
The corresponding Neumann problem is
\begin{equation}
	\label{eq:neumann_proto}
	\begin{cases}
		-\Delta u+\lambda u=u^{p}  & \text{in }\Omega,\\
		u>0                         & \text{in }\Omega,\\
		\frac{\partial u}{\partial\nu}=0          & \text{on }\partial\Omega,
	\end{cases}
\end{equation}
with $1<p\le2^{*}-1$ and $\lambda>0$.

In the subcritical case ($p<2^{*}-1$), Lin--Ni~\cite{lN} and
Lin--Ni--Takagi~\cite{LNT} established existence of a nonconstant
solution for $\lambda$ large and nonexistence for $\lambda$ small.
Ni--Takagi~\cite{NT1989,NT1991} then proved that, for $\lambda$ large,
the unique maximum of every least-energy solution lies on $\partial\Omega$
and concentrates toward a boundary point as $\lambda\to+\infty$.

In the critical case ($p=2^{*}-1$), Wang~\cite{Wang1991} and
Adimurthi--Mancini~\cite{AM} proved existence for general smooth domains
and large $\lambda$.
The decisive geometric insight was provided by Adimurthi--Pacella--Yadava~\cite{APY}: for $n\ge 7$, every least energy solution $u_{\lambda}$
of~\eqref{eq:neumann_proto} with $p=2^{*}-1$ concentrates as
$\lambda\to+\infty$ at a boundary point where the mean curvature $H$
attains its maximum $H_{M}:=\max_{\partial\Omega}H$;
as a consequence, whenever $\partial\Omega$ possesses $k$ strict local maxima
of $H$ at which $H$ is positive, problem~\eqref{eq:neumann_proto} admits
at least $k$ distinct solutions for $\lambda$ large~\cite{APY}.

In short, for~\eqref{eq:neumann_proto} in the critical case, the mean
curvature of $\partial\Omega$ governs both the location of concentration
and the solvability threshold.

	The step most directly relevant to the present work was taken by
Deng--Wang--Wu~\cite{DWW}, who introduced the doubly critical
Neumann problem
\begin{equation}\label{eq:DWW}
	\begin{cases}
		-\Delta u = u^{2^{*}-1} + f(x,u) & \text{in } \Omega, \\[4pt]
		\dfrac{\partial u}{\partial \nu} = u^{2^{\sharp}-1}
		& \text{on } \partial\Omega,
	\end{cases}
\end{equation}
resolving an open problem of Br\'{e}zis on the Neumann analogue
of the Br\'{e}zis--Nirenberg problem~\cite{BN}.
Their key analytic tool is Escobar's sharp doubly critical
Sobolev inequality~\cite{Escobar1988}: for every
$u\in D^{1,2}(\mathbb{R}^{n}_{+})$ and nonnegative
	constants $a,b$ with $a+b>0$,
\begin{equation}\label{eq:escobar_ineq}
		a\,\|u\|_{L^{2^{*}}(\mathbb{R}^{n}_{+})}
		+b\,\|u\|_{L^{2^{\sharp}}(\partial\mathbb{R}^{n}_{+})}
		\le S_{a,b}\,\|\nabla u\|_{L^{2}(\mathbb{R}^{n}_{+})},
\end{equation}
where the sharp constant $S_{a,b}$ depends on
	$n$, $a$, and $b$, and is achieved by
	\[
	\psi(x)
	=\bigl(1+|x'|^{2}+|x_{n}+x_{n}^{0}|^{2}\bigr)^{-(n-2)/2}
	\]
	for some constant $x_{n}^{0}=x_{n}^{0}(a,b,n)>0$.
Using~\eqref{eq:escobar_ineq} and a mountain-pass
argument below level $A$, Deng--Wang--Wu established the existence of
a positive least-energy solution of~\eqref{eq:DWW} whenever
$f(x,u)\ge-C(u+u^{s})$ for some $C\ge 0$ and $s\in(1,2^{\sharp}-1)$; the
case $f(x,u)=-\lambda u$ with any $\lambda>0$ is also covered.
Pierotti--Terracini~\cite{PT1995,PT} subsequently revealed how the
topology and geometry of $\Omega$ generate additional multiplicity, and
a complete half-space analysis was carried out by Deng--Shi~\cite{DS}.

The closest analogue of the present work is the result of Costa--Gir\~{a}o~\cite{CG}, who studied, for $n\ge 5$, the
homogeneous Neumann problem
\begin{equation}
	\label{eq:costa}
	\begin{cases}
		-\Delta u+\lambda u=u^{2^{*}-1}-\alpha u^{2^{\sharp}-1}
		& \text{in } \Omega,\\[4pt]
		\dfrac{\partial u}{\partial\nu}=0
		& \text{on } \partial\Omega.
	\end{cases}
\end{equation}
Costa--Gir\~{a}o established that there exists a threshold
$\alpha_{0}^{\mathrm{CG}}=\alpha_{0}^{\mathrm{CG}}(\lambda,\Omega)>0$
such that problem~\eqref{eq:costa} admits a least-energy solution
when $\alpha<\alpha_{0}^{\mathrm{CG}}$, and admits no least-energy
solution when $\alpha>\alpha_{0}^{\mathrm{CG}}$.

The present paper addresses the  problem at the intersection
of Deng--Wang--Wu~\cite{DWW} and Costa--Gir\~{a}o~\cite{CG}:
the case in which both a critical Neumann boundary
$\partial_{\nu}u=u^{2^{\sharp}-1}$ and a critical interior
absorption $-\alpha u^{2^{\sharp}-1}$ are simultaneously present.

		The energy functional associated with~\eqref{eq:main} is
	\begin{equation}\label{eq1.3}
		I_{\alpha}(u)
		= \int_{\Omega}\!\Bigl(\frac{1}{2}|\nabla u|^{2}
		+\frac{\lambda}{2}|u|^{2}
		+\frac{\alpha}{2^{\sharp}}|u|^{2^{\sharp}}
		-\frac{1}{2^{*}}|u|^{2^{*}}\Bigr)\,dx
		-\frac{1}{2^{\sharp}}\int_{\partial\Omega}|u|^{2^{\sharp}}\,d\sigma,
	\end{equation}
	where $d\sigma$ is the $(n-1)$-dimensional Hausdorff measure on
	$\partial\Omega$.  It is standard that
	\(I_\alpha\in C^2(H^1(\Omega),\mathbb R)\), and that its critical points
	are precisely the weak solutions of \eqref{eq:main}, namely, $u\in H^{1}(\Omega)$ satisfying
	\begin{equation}\label{eq1.2}
		\int_{\Omega}\!\bigl(\nabla u\cdot\nabla v
		+\lambda uv
		+\alpha|u|^{2^{\sharp}-2}uv
		-|u|^{2^{*}-2}uv\bigr)\,dx
		-\int_{\partial\Omega}|u|^{2^{\sharp}-2}uv\,d\sigma=0
	\end{equation}
for every \(v\in H^1(\Omega)\).
	
	The Nehari manifold associated with $I_{\alpha}$ is defined by
	\[
	\mathcal N_{\alpha}
	:=\{u\in H^1(\Omega)\setminus\{0\}:
	\langle I'_{\alpha}(u),u\rangle=0\}.
	\]
	To characterize least-energy solutions variationally, we use the
	fibering map associated with $I_{\alpha}$. A key feature of the present
	problem is that the absorption term has exactly the critical trace exponent
	$2^\sharp$. Consequently, after a suitable change of variables, the
	corresponding fibering equation reduces to a quadratic equation.
	
	More precisely, for every $u\neq0$, there exists a unique
	$t_{\alpha}(u)>0$ such that
	$t_{\alpha}(u)u\in\mathcal N_{\alpha}$, where
	\[
	t_{\alpha}(u)
	=
	\left(
	\frac{-b(u)+\sqrt{b(u)^2+4a(u)c(u)}}{2c(u)}
	\right)^{\frac{n-2}{2}},
	\]
	with
	\[
	a(u):=\|u\|^2,\qquad
	b(u):=\|u\|_{2^\sharp,\partial\Omega}^{2^\sharp}
	-\alpha\|u\|_{2^\sharp,\Omega}^{2^\sharp},
	\qquad
	c(u):=\|u\|_{2^*,\Omega}^{2^*}.
	\]
	
	Therefore, the least-energy level can be characterized as
	\[
	S_{\alpha}
	:=
	\inf_{u\in\mathcal N_{\alpha}}I_{\alpha}(u)
	=
	\inf_{u\neq0}\sup_{t>0}I_{\alpha}(tu).
	\]
	A function $u\in\mathcal N_{\alpha}$ satisfying
	$I_{\alpha}(u)=S_{\alpha}$ is called a \emph{least energy solution}  of
	\eqref{eq:main}.
	  Since $S_{0}<A$ (see~\cite{DWW}) and
		$S_{\alpha}$ is strictly increasing and continuous in $\alpha$
		by Lemma~\ref{lem:monotone_continuous} and
		Corollary~\ref{cor:strict_increase}, we introduce the threshold
		\begin{equation}\label{eq:alpha0_def}
			\alpha_{0}:=\inf\{\alpha\ge0:S_{\alpha}=A\},
		\end{equation}
		where $A$ is given by~\eqref{eq:A_value}. By construction,
		$\alpha_{0}>0$.
		Our main result is the following.
			
		\begin{Theorem}\label{thm:main}
			Let $\Omega$ be a smooth bounded domain in $\mathbb{R}^n$  with $n \ge 5$ and  $H$ be the mean
curvature of $\partial\Omega$. Then there exists a positive real number $\alpha_0 = \alpha_0(\lambda, \Omega)\ge C(n) \underset{\partial \Omega}{\max} H$ such that
			\begin{enumerate}
				\item[\textup{(i)}]
				problem~\eqref{eq:main} admits a least energy solution  if~ $0\le\alpha<\alpha_{0}$;
				\item[\textup{(ii)}]
				problem~\eqref{eq:main} admits no least energy solution if $\alpha>\alpha_{0}$;
                \item[\textup{(iii)}]  for $\alpha=\alpha_{0}$,
				problem~\eqref{eq:main} also admits a least energy solution if $ \alpha_{0}>   C(n)  \underset{\partial \Omega}{\max} H, $	
          where
          \[
C(n)=
\frac{2^{(5-n)/2} \,(n-2)^{(n-2)/2}  }
{\sqrt{n}\,(n-3)~(n-1)^{(n-5)/2}\left[
B\left(\frac12,\frac{n-2}{2}\right)
-
B_{\frac{n}{2(n-1)}}\left(\frac12,\frac{n-2}{2}\right)
\right]},
\]
depending only  on $n$, and $B$ is the standard Beta function, while $B_{\frac{n}{2(n-1)}}$ is an incomplete Beta function.  
\end{enumerate}

		\end{Theorem}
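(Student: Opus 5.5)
The plan follows the Costa--Gir\~{a}o scheme, with Escobar's doubly critical inequality \eqref{eq:escobar_ineq} replacing the usual Sobolev inequality. The energy threshold is $A$ from \eqref{eq:A_value}, the ground state level of the half-space problem $-\Delta U=U^{2^*-1}$ in $\mathbb R^n_+$, $\partial_\nu U=U^{2^\sharp-1}$ on $\partial\mathbb R^n_+$. Its extremals are the shifted bubbles $\psi$ with $x_n^0>0$.

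\textbf{Compactness below $A$.} I would first prove that if $S_\alpha<A$, then $S_\alpha$ is achieved. Take a minimizing sequence on $\mathcal N_\alpha$; by Ekeland's principle it can be chosen to be a (PS) sequence at level $S_\alpha$. It is bounded in $H^1(\Omega)$ because the Nehari identity gives $I_\alpha(u)=\frac1n\int u^{2^*}+\frac{1}{2(n-1)}\bigl(\int_{\partial\Omega}u^{2^\sharp}-\alpha\int_\Omega u^{2^\sharp}\bigr)$, and the quadratic fibering structure controls this. Let $u_k\rightharpoonup u$ and $w_k=u_k-u$. By Brezis--Lieb, the interior term $\alpha\int|w_k|^{2^\sharp}\to0$, since $H^1(\Omega)\hookrightarrow L^{2^\sharp}(\Omega)$ is compact. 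Hence the remainder sees only the two critical terms. A localized form of \eqref{eq:escobar_ineq}, after flattening the boundary, shows that either $w_k\to0$ or the remainder carries energy at least $A$ (interior bubbles cost $\frac1nS^{n/2}>A$). So $u\neq0$ is a minimizer and $S_\alpha=I_\alpha(u)$. Positivity follows by replacing $u$ with $|u|$ and applying the strong maximum principle.

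\textbf{Monotonicity, $S_\alpha\le A$, and the threshold.} Testing with concentrating half-bubbles at a boundary point shows $S_\alpha\le A$ for every $\alpha$. By Lemma~\ref{lem:monotone_continuous} and Corollary~\ref{cor:strict_increase}, $S_\alpha$ is continuous and strictly increasing while it is attained. The two cases then go as follows.

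For (i): if $\alpha<\alpha_0$, then $S_\alpha<A$ by the definition \eqref{eq:alpha0_def}, and the compactness step gives a least energy solution.

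For (ii): suppose $\alpha>\alpha_0$ and $S_\alpha$ is attained by some $u$. Then strict monotonicity, applied by testing $S_{\alpha_0}$ with $t_{\alpha_0}(u)u$, gives $S_{\alpha_0}<S_\alpha\le A$. This contradicts $S_{\alpha_0}=A$, which holds by continuity. Finiteness of $\alpha_0$ should follow from a Poincaré/trace-type argument: for $\alpha$ large, the absorption dominates the boundary term near $\partial\Omega$, and one shows $S_\alpha\ge A$.

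\textbf{The expansion and part (iii), the main obstacle.} The central computation is the energy of the test functions $u_\varepsilon=\varphi\,\varepsilon^{-(n-2)/2}\psi((x-x_0)/\varepsilon)$ in boundary-adapted coordinates at a point $x_0$ where $H$ is maximal. Here $\psi$ is the Escobar extremal with $x_n^0$ chosen for the relevant $a,b$. For $n\ge5$ I expect
\[
\sup_{t>0}I_\alpha(tu_\varepsilon)=A-\varepsilon\bigl(c_1H(x_0)-c_2\alpha\bigr)+o(\varepsilon),
\]
with the following sources:
\begin{itemize}
\item the curvature correction comes from the Dirichlet integral and the boundary trace integral;
\item the $\alpha$ term is $\alpha\varepsilon\int_{\mathbb R^n_+}\psi^{2^\sharp}$;
\item the $\lambda$ term is $O(\varepsilon^2)$, which is why $n\ge5$ is needed.
\end{itemize}
The constants $c_1,c_2$ come from integrating $\psi^{2^\sharp}$ over the shifted half-space. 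This produces the incomplete Beta function $B_{n/(2(n-1))}$, and I expect $C(n)=c_1/c_2$. It follows that $S_\alpha<A$ whenever $\alpha<C(n)H(x_0)$, which yields the lower bound $\alpha_0\ge C(n)\max H$.

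For (iii), take $\alpha_n\uparrow\alpha_0$ and least energy solutions $u_n$ with $S_{\alpha_n}\uparrow A$. If $u_n\rightharpoonup u\neq0$, then $u$ is a solution at level $\le A$, and lower semicontinuity together with continuity of $S$ makes it least energy. If instead $u_n$ blows up, a blow-up analysis shows concentration at a boundary point $x_0$, and a refined expansion of $I_{\alpha_n}(u_n)$ gives $A-\varepsilon_n(c_1H(x_0)-c_2\alpha_n)+o(\varepsilon_n)\le A$. Since $S_{\alpha_n}<A$ for $\alpha_n<\alpha_0$, this forces $\alpha_0\le C(n)H(x_0)\le C(n)\max H$, contradicting the hypothesis.

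The hardest part will be the matching lower bound for blowing-up minimizers, which shows that they are asymptotically the test bubbles. I would try a Pohozaev identity on small half-balls, combined with pointwise bubble estimates \`a la Adimurthi--Pacella--Yadava, to extract the sign of $c_1H(x_0)-c_2\alpha$.
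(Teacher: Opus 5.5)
\textbf{The gap is the finiteness of $\alpha_0$.} Part (ii), part (iii), and the claim that $\alpha_0$ is a real number all rest on $\alpha_0<+\infty$, and the mechanism you propose for it cannot work. There are two problems.

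\emph{The absorption never dominates the boundary term near $\partial\Omega$.} For a boundary bubble, $\alpha\int_\Omega V_{\varepsilon,\xi}^{2^\sharp}\,dx=\alpha\varepsilon\int_{\R^n_+}V_1^{2^\sharp}\,dy+o(\alpha\varepsilon)$, while $\int_{\partial\Omega}V_{\varepsilon,\xi}^{2^\sharp}\,d\sigma\to\|V_1\|_{2^\sharp,\partial\R^n_+}^{2^\sharp}$. So for each fixed $\alpha$ the absorption vanishes at small scales, which is exactly why $S_\alpha\le A$ for every $\alpha$.

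\emph{$S_\alpha\ge A$ is a sharp statement.} Trace or Poincar\'e inequalities with non-sharp constants give at best $S_\alpha\ge c>0$. Escobar's inequality localized after flattening costs a factor $1+\eta$ and gives $S_\alpha\ge A-C\eta$. Neither gives $S_\alpha\ge A$. Whether $S_\alpha=A$ is decided at order $\varepsilon$, where $B(n)\alpha\varepsilon$ competes with $-L(n)H\varepsilon$. You must show that no function at all, not only the test bubbles, gets below $A$ by exploiting curvature, and that requires analysing actual minimizers.

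The paper argues by contradiction. Assume $S_{\alpha_k}<A$ with $\alpha_k\to+\infty$ and take minimizers $u_k$.
\begin{enumerate}
\item The Nehari identity forces $u_k\rightharpoonup0$.
\item Lemma~\ref{lem:local_compactness} then gives $S_{\alpha_k}\to A$ and, crucially, $\alpha_k\int_\Omega u_k^{2^\sharp}\,dx\to0$. After rescaling at $\delta_k\sim M_k^{-2/(n-2)}$ this yields $\alpha_k\delta_k\to0$.
\item Only then is the blow-up limit the absorption-free half-space profile $V_1$ (Li--Zhu), concentrating at a boundary point.
\item Write $u_k=b_kV_{\varepsilon_k,\xi_k}+\phi_k$ with $\phi_k\perp T_{1,V_{\varepsilon_k,\xi_k}}(\mathcal M)$. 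Coercivity of the linearized form (nondegeneracy of $V_1$ via Han--Li's spectral gap) gives $\|\phi_k\|=O(\alpha_k\varepsilon_k)$.
\item Hence $I_{\alpha_k}(u_k)\ge A+[B(n)\alpha_k-L(n)H(\xi_k)]\varepsilon_k+o(\alpha_k\varepsilon_k)>A$, a contradiction.
\end{enumerate}

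This is the same refined lower bound you identify as the hard step of (iii). Your Pohozaev/pointwise-estimate route is a genuinely different tool from the paper's orthogonal decomposition plus spectral gap, and could in principle serve both purposes. But it must be run with $\alpha_k\to+\infty$, and the step $\alpha_k\delta_k\to0$ is missing from your plan. That step is automatic for bounded $\alpha_k$ but not here.

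The rest of your outline agrees with the paper: compactness below $A$, monotonicity and continuity of $S_\alpha$, (i), (ii) once finiteness is known, and the test-bubble bound $\alpha_0\ge C(n)\max_{\partial\Omega}H$. In the nonvanishing case of (iii), weak lower semicontinuity of $\frac{1}{2(n-1)}\|u\|^2+\frac{n-2}{2n(n-1)}\|u\|_{2^*,\Omega}^{2^*}$ is a tidy alternative to the paper's Brezis--Lieb argument. For boundedness of minimizing sequences, subtract $\frac{1}{2^\sharp}J_\alpha$ rather than $\frac12 J_\alpha$, so that $\|u\|^2$ appears with a positive coefficient.
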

\begin{Remark}
When $\alpha_{0}=C(n)\underset{\partial \Omega}{\max} H $, either
		there exists a least energy solution of~\eqref{eq:main}
		with $\alpha=\alpha_0$,
	or	the sequence of the least energy solutions $\{u_{\alpha_k}\}$ for $\alpha _k \rightarrow \alpha^- _0 $ as $ k \rightarrow \infty$ has a subsequence
			$\{u_{k_j}\}$ with
		$u_{k_j}\rightharpoonup 0$,\,
		$\|u_{k_j}\|_{L^{\infty}(\Omega)}\to\infty,$
		and every limit point of the maximum points
		of $u_{k_j}$
		lies in
		$\{\xi\in\partial\Omega:
		H(\xi)=\max\limits_{\partial\Omega}H\}.$
In the special case where
$\Omega=B_R(0)$, the boundary mean curvature $H=\frac{1}{R}$, and the lower bound of $\alpha _0$ becomes $\frac{C(n)}{R}$. Observe that  the lower bound of $\alpha_0$ is inversely proportional to the radius of the sphere $B_R(0)$ .
\end{Remark}
		
Compared with previous works, the simultaneous presence of the
critical Sobolev exponent $2^\ast$ and the critical trace exponent $2^\sharp$,
together with the absorption term $-\alpha u^{2^\sharp-1}$ that carries the
same exponent as the boundary nonlinearity, gives rise to several new
difficulties. One of the difficulties is the presence of two distinct critical terms: the interior Sobolev exponent $2^\ast$ and the boundary trace exponent $2^\sharp$.  Both embeddings $H^1(\Omega)\hookrightarrow L^{2^\ast}(\Omega)$ and $H^1(\Omega)\hookrightarrow L^{2^\sharp}(\partial\Omega)$ fail to be compact.
Consequently, minimizing sequences for $S_\alpha$ may lose compactness
through interior or boundary concentration, and the least energy level
$S_\alpha$ may fail to be attained. The relevant compactness threshold is therefore determined by the
doubly critical problem in the half-space. In particular, the strict
inequality $S_\alpha<A$ rules out concentration at the critical level
and restores compactness.
 Another major difficulty stems from the fact that the absorption term
 has exactly the critical trace exponent $2^\sharp$, and hence the same
 scaling order as the boundary critical nonlinearity. In the
lower-order perturbation case studied by Deng--Wang--Wu \cite{DWW}, the
perturbation contributes only at order
$O(\varepsilon^2)$ to the bubble energy. Hence it is    dominated by the boundary
mean curvature correction $-L(n)H(\xi)\varepsilon$.
In that case, the curvature term dominates the perturbation, and the
bubble energy is automatically lowered below the half-space level. Here, by contrast,
the critical exponent $2^\sharp$ makes the absorption contribution
$B(n)\alpha\varepsilon$ enter the energy expansion at the same order
$\varepsilon$ as the curvature correction, and with the opposite sign (see
Lemma~\ref{lem:expansion}).  Consequently, the least energy level
$S_\alpha$ is no longer automatically below the critical level $A$. The  existence theory now reduces to determining whether the strict inequality
$S_\alpha<A$ still holds. This competition gives rise naturally to the threshold parameter
$\alpha_0$, which separates the regime where $S_\alpha<A$ from the
critical regime where $S_\alpha=A$.

To overcome these difficulties, we combine a blow-up analysis, the explicit Li--Zhu \cite{LZ} classification of half-space bubbles, coercivity estimates for the linearized operator, refined energy expansions, monotonicity
properties on the Nehari manifold, and concentration--compactness arguments. After rescaling at the concentration scale, the absorption term becomes
asymptotically negligible, since $\alpha_k\delta_k\to0$. Therefore the limiting profile is governed by the absorption‑free half‑space problem. The refined energy expansion quantifies the competition and determines the sharp threshold $\alpha_0$.  By the monotonicity and continuity of $S_\alpha$, one has
$S_\alpha<A$ for $\alpha<\alpha_0$, which restores compactness and yields
a least energy solution.   For $\alpha>\alpha_0$, we have $S_\alpha=A$, and no least energy solution exists.

		The threshold is quantitatively controlled by the geometry of $\partial\Omega$.  Recall the  ground state of limit equation in the half-space
\begin{equation}\label{eq:V1_extension}
			V_{1}(y)=C_{n}\bigl(1+|y'|^{2}+(y_{n}+x_{n}^{0})^{2}\bigr)^{-(n-2)/2}.
		\end{equation}
        Observe that $V_1$ lacks translational symmetry in the $y_n$-direction and
attains its unique maximum at a fixed normal distance $x_n^0$ from the
boundary. Let $\{u_k\}$ be a concentrating sequence of least energy
solutions. After rescaling, $u_k$ is expected to converge to the profile
$V_1$. To approximate $u_k$ near its concentration point, we adapt $V_1$
to the curved boundary. To this end, let
        \begin{equation*}
            V_{\varepsilon,\xi}(x)=\varepsilon^{-(n-2)/2}\,V_{1}\left(\frac{T_{\xi}^{-1}(x)}{\varepsilon}\right)
        \end{equation*}
        denote
			the boundary bubble centred at $\xi\in\partial\Omega$ at scale
			$\varepsilon>0$ (see Definition~\ref{def:bubble}).
		The energy expansion of the bubble functional
		(Lemma~\ref{lem:expansion}) gives, for any $\xi\in\partial\Omega$ with
		$H(\xi)>0$,
		\begin{equation}\label{eq:expansion_intro}
			I_{\alpha}(V_{\varepsilon,\xi})
			=A-L(n)\,H(\xi)\,\varepsilon+B(n)\,\alpha\,\varepsilon+O(\varepsilon^{2}),
			\quad\text{as }\varepsilon\to 0,
		\end{equation}
		where $$H(\xi) = \dfrac{1}{n-1}\sum_{i=1}^{n-1}h_i(\xi)$$ is the mean
curvature of $\partial\Omega$ at $\xi$, with $h_1(\xi),\ldots,h_{n-1}(\xi)$
the principal curvatures of $\partial\Omega$ at $\xi$, 
 \begin{equation*}\label{eq:LB}
	\begin{aligned}
		L(n)&=\frac{n^{n/2}\,(n-2)^{(2n-1)/2}\,\pi^{n/2}}
		{(n-3)\,2^{(3n-3)/2}\,(n-1)^{(n-3)/2}\,\Gamma(\frac{n}{2}+1)},\\[6pt]
		B(n)
		&=
		\frac{(n(n-2))^{(n+1)/2}\pi^{n/2}}
		{2^{\,n+1}(n-1)\Gamma\!\left(\frac n2+1\right)}
		\left[
		B\!\left(\frac12,\frac{n-2}{2}\right)
		-
		B_{\frac{n}{2(n-1)}}\!\left(\frac12,\frac{n-2}{2}\right)
		\right].
	\end{aligned}
\end{equation*}
 In particular, $L(n)>0$ and $B(n)>0$ for all $n\geq 5$. The constant
 \[
 C(n):=\frac{L(n)}{B(n)}
 \]
 will be used throughout the paper. The energy falls below $A$ when
$\alpha<\frac{L(n)}{B(n)} H(\xi)=C(n)H(\xi)$, optimising over
$\xi\in\partial\Omega$ yields the geometric lower bound
\begin{equation}\label{eq:alpha0_lower}
	\alpha_{0}\;\ge\;C(n)\,\max_{\partial\Omega}H.
\end{equation} 
        This estimate also admits a natural geometric interpretation. When $\Omega=B_R(0)$, the boundary mean curvature is constant, i.e. $H\equiv1/R$, and the geometric lower bound reduces to 
        $$ \alpha_0\ge\frac{C(n)}{R}. $$ 
        Hence the lower bounded of threshold $\alpha_{0}$ grows as the ball's radius $R$ shrinks.
         The borderline case $$\alpha_0=C(n)\max_{\partial\Omega}H $$ is analyzed in detail in Section~\ref{sec:section5}.

	The paper is organized as follows.
	Section~\ref{sec:Preliminaries} collects preliminary facts, including bubble profiles at boundary points and some key lemmas. In Section~\ref{subsec:nehari}, we
	introduce the threshold $\alpha_0$,
	and prove the existence of a least energy solution for every
	$\alpha<\alpha_0$.
	    Section~\ref{sec:blowup} shows that any sequence of least energy solutions
	$\{u_k\}$ at parameters $\alpha_k\to+\infty$ must blow up and
	concentrate near a boundary point as a rescaled bubble, while
	Section~\ref{sec:section4} establishes the finiteness of the threshold
	parameter $\alpha_0$ through a precise energy expansion based on the
	bubble profiles and the mean curvature of $\partial\Omega$, thereby
	completing the proof of Theorem~\ref{thm:main} (i)--(ii).
	Section~\ref{sec:section5} addresses the case
	$\alpha=\alpha_0$: we establish a geometric lower bound for $\alpha_0$, provide a proof of Theorem~
    \ref{thm:main} (iii) and   show that a
	least energy solution either exists at the threshold or every
	approximating sequence concentrates at a point of maximal boundary
	mean curvature.

	\paragraph*{Notation.}
	For $1\le p\le\infty$, $\|\cdot\|_{p,\Omega}$ and
	$\|\cdot\|_{p,\partial\Omega}$ denote, respectively, the $L^{p}$-norms over
	$\Omega$ and over $\partial\Omega$; $d\sigma$ is the
	$(n-1)$-dimensional Hausdorff measure on $\partial\Omega$.
	We equip $H^{1}(\Omega)$ with the $\lambda$-weighted norm
	\[
	\|u\|^{2}:=\int_{\Omega}\bigl(|\nabla u|^{2}+\lambda u^{2}\bigr)\,dx,
	\]
	which is equivalent to the standard $H^{1}$-norm since $\lambda>0$.
	The homogeneous Sobolev space $D^{1,2}(\R_+^n)$ is endowed with
	$\|u\|_{D^{1,2}}:=\|\nabla u\|_{L^{2}(\R_+^n)}$. The pairing
	$\langle\cdot,\cdot\rangle$ denotes the duality between
	$H^{1}(\Omega)^{*}$ and $H^{1}(\Omega)$.

\section{Preliminaries}\label{sec:Preliminaries}

In this section, we first record the
sharp constants for the  limit problem on half space and two key local
compactness lemmas.  Secondly, we construct the
two-parameter family of bubble profiles localized at boundary points
of~$\Omega$.


Let
\[
\R^{n}_{+}:=\{x=(x',x_{n})\in\R^{n}:\,x_{n}>0\},
\]
and  $\partial\R^{n}_{+}=\{(x',x_{n})\in\R^{n}: x_{n}=0\}$. Let $\nu=(0,\dots,0,-1)$  denote a unit outward normal vector to
$\partial\R^{n}_{+}$.  Before studying the original problem \eqref{eq:main} on the bounded domain
$\Omega$, we first consider the associated doubly critical problem on the
half-space $\mathbb{R}^{n}_{+}$, which plays a fundamental role in the
analysis of the critical level.
 Li--Zhu \cite[Theorem~1.2]{LZ} classified all positive solutions
 $U\in D^{1,2}(\mathbb{R}^{n}_{+})$ of
 \begin{equation}\label{eq:half_space_critical}
 	\begin{cases}
 		-\Delta U=U^{2^{*}-1} & \text{in }\mathbb{R}^{n}_{+},\\
 		-\partial_{x_n}U=U^{2^{\sharp}-1}
 		& \text{on }\partial\mathbb{R}^{n}_{+},
 	\end{cases}
 \end{equation}
 and showed that, up to translations parallel to the boundary and
 dilations, every such solution is given by
 \begin{equation}\label{eq:V1}
 	V_{1}(x)
 	=
 	C_{n}
 	\left(
 	\frac{1}{1+|x'|^{2}+(x_{n}+x_{n}^{0})^{2}}
 	\right)^{(n-2)/2},
 \end{equation}
 where
 \[
 C_{n}=(n(n-2))^{(n-2)/4},
 \qquad
 x_{n}^{0}=\sqrt{\frac{n}{n-2}}.
 \]
This explicit formula allows us to construct a family of bubbles
\begin{equation}\label{eq:Vepsxi}
	V_{\varepsilon,z}(x)
	:=\varepsilon^{-(n-2)/2}V_{1}\!\Bigl(\dfrac{x-z}{\varepsilon}\Bigr),
	\qquad\varepsilon>0,\ z=(z',0)\in\partial\R^{n}_{+}.
\end{equation}

The variational level associated with \eqref{eq:half_space_critical}
is
\begin{equation}\label{eq:A_def}
	A:=\inf_{u\in D^{1,2}(\R^{n}_{+})\setminus\{0\}}\sup_{t>0}\Psi_{\R^{n}_{+}}(tu),
\end{equation}
where, for an open set $D\subseteq\R^n$ with $C^{1}$ boundary,
\begin{equation}\label{eq:Psi_inf}
	\Psi_{D}(u)
	:=\tfrac12\!\int_{D}\!|\nabla u|^{2}\,dx
	-\tfrac{1}{2^{*}}\!\int_{D}\!|u|^{2^{*}}\,dx
	-\tfrac{1}{2^{\sharp}}\!\int_{\partial D}\!|u|^{2^{\sharp}}\,d\sigma.
\end{equation}
In what follows, we focus on $D=\Omega$ and $D=\R_+^n$.

Now, we present two key lemmas.
\begin{Lemma}\cite[Lemma~2.3]{DWW}\label{lem2.1}
	The infimum~\eqref{eq:A_def} is achieved by $V_{1}$. By the invariance
	of $\Psi_{\mathbb{R}^n_+}$ under translations along $\partial\mathbb{R}^n_+$
	and under the rescaling $u(x)\mapsto\varepsilon^{-(n-2)/2}u(x/\varepsilon)$,
	it is also attained at every member of the family
	$\{V_{\varepsilon,z}:\varepsilon>0,\ z\in\partial\mathbb{R}^n_+\}$.
	Moreover,
	\begin{equation}\label{eq:A_value}
		A=\Psi_{\mathbb{R}^{n}_{+}}(V_{1})
		=(\tfrac{1}{2}-\tfrac{1}{2^*})\|V_{1}\|_{2^{*},\mathbb{R}^{n}_{+}}^{2^{*}}
		+(\tfrac{1}{2^\sharp}-\tfrac{1}{2^*})\|V_{1}\|_{2^{\sharp},\partial\mathbb{R}^{n}_{+}}^{2^{\sharp}}>0.
	\end{equation}
\end{Lemma}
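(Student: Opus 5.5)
The plan has three parts: show the infimum in \eqref{eq:A_def} is attained by $V_1$, transfer this to the whole family by invariance, and compute $A$ through the Nehari identity.

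\textbf{Attainment by $V_1$.} I would derive this from Escobar's sharp inequality \eqref{eq:escobar_ineq} together with the Li--Zhu classification. Fix $u\in D^{1,2}(\R^n_+)\setminus\{0\}$ and set $a=\|\nabla u\|_2^2$, $b=\|u\|_{2^\sharp,\partial\R^n_+}^{2^\sharp}$ and $c=\|u\|_{2^*,\R^n_+}^{2^*}$. The fibering map $t\mapsto\Psi_{\R^n_+}(tu)$ has a unique maximum point $t_u>0$, determined by $a=bt^{2^\sharp-2}+ct^{2^*-2}$; after the substitution $s=t^{2/(n-2)}$ this is a quadratic equation, as noted in the introduction. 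Hence $\sup_{t>0}\Psi(tu)$ is an explicit function of $(a,b,c)$ that is $0$-homogeneous under the scaling $u\mapsto\varepsilon^{-(n-2)/2}u(\cdot/\varepsilon)$. Minimizing it is therefore equivalent to a two-term sharp inequality of the form \eqref{eq:escobar_ineq}. I would take a minimizing sequence normalized on the Nehari set, apply concentration-compactness in $D^{1,2}(\R^n_+)$ modulo boundary translations and dilations, and rule out dichotomy by strict subadditivity of the level.

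\textbf{Identifying the minimizer.} The resulting minimizer is, after replacing it by $|u|$ and applying the Lagrange multiplier rule, a nonnegative solution of \eqref{eq:half_space_critical}. It is positive by the strong maximum principle and Hopf's lemma. By Li--Zhu \cite{LZ} it equals $V_1$ up to boundary translation and dilation. The constants $C_n$ and $x_n^0=\sqrt{n/(n-2)}$ can be checked directly: $V_1$ solves the interior equation because it is the standard Aubin--Talenti bubble in $\R^n$, and the boundary identity $-\partial_{x_n}V_1=(n-2)x_n^0 C_n^{-2/(n-2)}V_1^{2^\sharp-1}$ holds on $\{x_n=0\}$, so it remains to verify that this coefficient equals $1$.

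\textbf{Invariance of the family.} The invariance claim is immediate. Under $u\mapsto\varepsilon^{-(n-2)/2}u((\cdot-z)/\varepsilon)$ each of the three integrals in $\Psi_{\R^n_+}$ is unchanged: the Dirichlet integral and the $L^{2^*}$ integral by the usual critical scaling, and the boundary integral because $(n-1)-\frac{n-2}{2}2^\sharp=0$. Translations along $\partial\R^n_+$ preserve the half-space. Consequently $\sup_{t>0}\Psi(tV_{\varepsilon,z})=\sup_{t>0}\Psi(tV_1)$.

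\textbf{Value of $A$.} Since $V_1$ solves \eqref{eq:half_space_critical}, testing the equation with $V_1$ gives the Nehari identity $\|\nabla V_1\|_2^2=\|V_1\|_{2^*}^{2^*}+\|V_1\|_{2^\sharp,\partial}^{2^\sharp}$. Thus $t=1$ is the maximum point of the fibering map. Substituting this identity into $\Psi(V_1)$ yields \eqref{eq:A_value}, and $A>0$ because $\frac12>\frac1{2^\sharp}>\frac1{2^*}$.

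\textbf{Main obstacle.} The hard step is the compactness and identification of the minimizer, which requires concentration-compactness on the half-space with two critical terms. Instead of redoing it, I would rely on \cite{Escobar1988}, where extremals of \eqref{eq:escobar_ineq} are shown to exist and to be of the form $\psi$, and then match the parameters $(a,b)$ to the quadratic fibering relation.
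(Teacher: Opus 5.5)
\textbf{Verdict: there is a genuine gap in the existence step.} Your verification that $V_1$ solves \eqref{eq:half_space_critical} is correct; the coefficient $(n-2)x_n^0C_n^{-2/(n-2)}$ is indeed $1$. So are the invariance computation and the identification step (Nehari is a natural constraint, pass to $|u|$, Hopf, Li--Zhu \cite{LZ}). The paper proves none of this itself; it quotes \cite[Lemma~2.3]{DWW}. What that citation supplies is exactly the part you leave open: the fibering level actually has a minimizer.

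\textbf{The Escobar shortcut.} You propose to rely on \cite{Escobar1988} and ``match $(a,b)$ to the quadratic fibering relation''. This rests on the claim that minimizing $\sup_{t>0}\Psi_{\R^n_+}(tu)$ is equivalent to a two-term inequality of the form \eqref{eq:escobar_ineq}, and that does not follow from $0$-homogeneity. Normalize $\|\nabla u\|_2=1$ and set $\beta=\|u\|_{2^\sharp,\partial\R^n_+}$, $\gamma=\|u\|_{2^*,\R^n_+}$. The fibering level is then a nonlinear decreasing function $F(\beta,\gamma)$ whose level curves are not straight lines. A single Escobar inequality only confines the attainable pairs $(\beta,\gamma)$ to a half-plane. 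Grant that $V_1$ is the extremal for a suitable $(a,b)$, so that the edge of that half-plane is tangent to $\{F=\Psi_{\R^n_+}(V_1)\}$ at the point of $V_1$. Concluding $F\ge\Psi_{\R^n_+}(V_1)$ on the whole half-plane is still a global convexity comparison you have not made. Extremality for one linear functional does not by itself give minimality for $F$.

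An Escobar-based proof would need three things:
\begin{enumerate}
\item the entire one-parameter family of inequalities;
\item the fact that their extremals $\psi_s$ exhaust the upper boundary of the attainable set;
\item a minimization of $F$ along $s\in(0,\infty)$, with the degenerate ends (half-bubble, trace bubble) excluded.
\end{enumerate}

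\textbf{The concentration--compactness route.} If you carry out this route instead, the only symmetries are boundary translations and dilations. A Nehari-normalized minimizing sequence can therefore also lose compactness through a profile drifting into the interior, whose limit is an Aubin--Talenti bubble at level $\frac1nS^{n/2}$. Subadditivity does not exclude this. You need the strict bound $A<\frac1nS^{n/2}$; the half-bubble test in the proof of Lemma~\ref{lem:concentration_locus} gives even $A<\frac1{2n}S^{n/2}$.

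\textbf{The value of $A$.} Your last step does not produce \eqref{eq:A_value} as printed. Substituting the Nehari identity $\|\nabla V_1\|_2^2=\|V_1\|_{2^*,\R^n_+}^{2^*}+\|V_1\|_{2^\sharp,\partial\R^n_+}^{2^\sharp}$ into $\Psi_{\R^n_+}(V_1)$ gives
\[
A=\Bigl(\tfrac12-\tfrac1{2^*}\Bigr)\|V_1\|_{2^*,\R^n_+}^{2^*}+\Bigl(\tfrac12-\tfrac1{2^\sharp}\Bigr)\|V_1\|_{2^\sharp,\partial\R^n_+}^{2^\sharp}
=\tfrac{1}{2(n-1)}\|\nabla V_1\|_{2,\R^n_+}^{2}+\tfrac{n-2}{2n(n-1)}\|V_1\|_{2^*,\R^n_+}^{2^*}.
\]
By contrast, \eqref{eq:A_value} puts $\frac1{2^\sharp}-\frac1{2^*}=\frac{n-2}{2n(n-1)}$ in front of the boundary integral, instead of $\frac12-\frac1{2^\sharp}=\frac1{2(n-1)}$. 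The boundary integral is positive, so the printed formula is wrong. It splices the coefficient of $\|V_1\|_{2^*,\R^n_+}^{2^*}$ from the second correct form onto the boundary term. You should have caught the mismatch rather than asserting agreement. The positivity $A>0$ is unaffected.
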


\begin{Lemma}\cite[Lemma~2.2]{DSX}\label{lem:local_compactness}
	Let $\{v_{k}\}\subset H^{1}(\Omega)$ satisfy $v_{k}\rightharpoonup 0$
	weakly in $H^{1}(\Omega)$.  Then either $v_{k}\to 0$ strongly in
	$H^{1}(\Omega)$, or
	\begin{equation}\label{eq:local_compactness}
		\liminf_{k\to\infty}\sup_{t>0}\Psi_{\Omega}(t v_{k})\ge A.
	\end{equation}
\end{Lemma}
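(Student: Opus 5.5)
The statement to prove is Lemma~\ref{lem:local_compactness}. The plan is a concentration argument: decompose $v_k$ by a partition of unity, flatten the boundary near each point, and compare with the half-space level $A$ through the sharp inequality \eqref{eq:escobar_ineq}.

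\emph{Step 1 (reduction).} Assume $v_k\rightharpoonup0$ in $H^1(\Omega)$ but $v_k\not\to0$ strongly. Pass to a subsequence with $\|v_k\|\ge c>0$. By compactness of the embeddings $H^1(\Omega)\hookrightarrow L^2(\Omega)$ and $H^1(\Omega)\hookrightarrow L^2(\partial\Omega)$ we have $v_k\to0$ in both. Hence $\|\nabla v_k\|_{2,\Omega}^2=\|v_k\|^2+o(1)$.

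\emph{Step 2 (the value of $\sup_t\Psi_\Omega(tv_k)$).} Write $a_k=\|\nabla v_k\|_2^2$, $b_k=\|v_k\|_{2^\sharp,\partial\Omega}^{2^\sharp}$, $c_k=\|v_k\|_{2^*,\Omega}^{2^*}$. The function $t\mapsto \tfrac{t^2}{2}a_k-\tfrac{t^{2^\sharp}}{2^\sharp}b_k-\tfrac{t^{2^*}}{2^*}c_k$ has a unique maximizer. Its maximum is a function $F(a_k,b_k,c_k)$ which is increasing in $a$, decreasing in $b$ and $c$, and satisfies the scaling $F(s^2a,s^{2^\sharp}b,s^{2^*}c)=\sup_t(\cdots)$. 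If $b_k,c_k\to0$ along a subsequence, then $F\to+\infty$ because $a_k\ge c/2$, and we are done. Otherwise the claim reduces to the following inequality, to hold up to $o(1)$ errors:
\[
a_k\ \ge\ \mathcal S\bigl(b_k,c_k\bigr),
\]
where $\mathcal S$ is the half-space relation defining $A$. Equivalently, for every $u\in D^{1,2}(\R^n_+)$ we have $\sup_t\Psi_{\R^n_+}(tu)\ge A$, which is \eqref{eq:A_def}.

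\emph{Step 3 (localization; the main obstacle).} Cover $\overline\Omega$ by finitely many small balls $B_j$. Interior balls lie in $\Omega$, and boundary balls carry charts $\Phi_j$ flattening $\partial\Omega$, with $D\Phi_j(0)=\mathrm{Id}$ and distortion at most $\eta$. Take a partition of unity $\sum_j\varphi_j^2=1$. Then
\[
\int_\Omega|\nabla v_k|^2=\sum_j\int|\nabla(\varphi_jv_k)|^2+o(1),
\]
since the commutator terms involve only $\|v_k\|_2\to0$. After transplanting to $\R^n_+$ (or $\R^n$, which only helps since the whole-space Sobolev constant is better), the Jacobian errors are $(1+O(\eta))$. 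The hard part is recombining the pieces. Because $2^*/2>1$ and $2^\sharp/2>1$, superadditivity gives
\[
\Bigl(\sum_j x_j\Bigr)^{p}\le \bigl(\max_j x_j\bigr)^{p-1}\sum_j x_j
\]
applied to $x_j=\|\varphi_j^2|v_k|^2\|$-type quantities. We would use Escobar's inequality in its equivalent nonlinear form: for each $w$ on $\R^n_+$,
\[
\|\nabla w\|_2^2\ \ge\ \theta\,\|w\|_{2^*}^{2^*}+\theta'\|w\|_{2^\sharp,\partial}^{2^\sharp},
\]
where, on the Nehari-type constraint, $\theta,\theta'$ come from the $A$-level. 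The inequality is applied to each $\varphi_jv_k$, using $\varphi_j^{2^*}\le\varphi_j^2$ and $\varphi_j^{2^\sharp}\le\varphi_j^2$. Summing over $j$ and letting $\eta\to0$ should yield $\liminf_k F(a_k,b_k,c_k)\ge A$.

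The point requiring care is that $\Psi$ is not homogeneous. We would therefore argue with the Nehari-normalized multiples $t_kv_k$, which are bounded above and below. We would then show that if $\sup_t\Psi_\Omega(t v_k)\le A-\delta$, some single piece $\varphi_jv_k$ must satisfy $\sup_t\Psi_{\R^n_+}\le A-\delta/2$ after flattening, contradicting \eqref{eq:A_def}.
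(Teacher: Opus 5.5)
The paper gives no proof of this lemma (it is quoted from [DSX, Lemma~2.2]), so your proposal has to stand on its own. Steps~1 and~2 are fine as preliminaries. The recombination in Step~3, which is where all the content lies, fails as written, for three reasons.

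(a) The displayed inequality is reversed. For $p>1$ and $x_j\ge0$ one has $(\sum_j x_j)^p\ge(\max_j x_j)^{p-1}\sum_j x_j$ (take $x_1=x_2=1$). The true statement is $\sum_j x_j^p\le(\max_j x_j)^{p-1}\sum_j x_j$.

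(b) There is no inequality $\|\nabla w\|_2^2\ge\theta\|w\|_{2^*}^{2^*}+\theta'\|w\|_{2^\sharp,\partial}^{2^\sharp}$ with fixed $\theta,\theta'>0$; replace $w$ by $sw$ and let $s\to\infty$. On the Nehari set of $w$ it is an identity with $\theta=\theta'=1$. But the pieces $\varphi_jt_kv_k$ do not lie on their own Nehari sets. The half-space information is the non-additive condition $F(\|\nabla u\|_2^2,\|u\|_{2^\sharp,\partial}^{2^\sharp},\|u\|_{2^*}^{2^*})\ge A$, which cannot simply be summed over $j$.

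(c) Decisively, $\varphi_j^{2^*}\le\varphi_j^2$ gives $\sum_j\|\varphi_jv_k\|_{2^*}^{2^*}\le\|v_k\|_{2^*}^{2^*}$, and likewise on $\partial\Omega$, which is the wrong direction. To deduce from $\sup_t\Psi_\Omega(tv_k)\le A-\delta$ that a single piece has $\sup_t\Psi\le A-\delta/2$, you need $\Psi_\Omega(tv_k)\ge\sum_j\Psi(t\varphi_jv_k)-o(1)$ for bounded $t$. That is, the pieces must capture \emph{all} of the interior and boundary nonlinear mass. A partition fixed in advance does not do this: mass concentrating in an overlap where $\varphi_1^2=\varphi_2^2=\frac12$ is undercounted by the factor $2^{1-2^*/2}<1$ (resp.\ $2^{1-2^\sharp/2}$ on $\partial\Omega$). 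The usual single-norm remedy, $\|w\|_{2^*}^2\le\sum_j\|\varphi_jw\|_{2^*}^2$, controls the two norms only separately in $\ell^2$ form and does not recombine into the condition defining $A$.

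The missing idea is to localize where the mass actually goes, i.e.\ Lions' concentration-compactness. Along a subsequence, on $\overline\Omega$,
$|\nabla v_k|^2dx\rightharpoonup\mu$, $|v_k|^{2^*}dx\rightharpoonup\nu=\sum_j\nu_j\delta_{x_j}$ and $|v_k|^{2^\sharp}d\sigma\rightharpoonup\omega=\sum_j\omega_j\delta_{x_j}$. Both $\nu$ and $\omega$ are purely atomic because $v_k\rightharpoonup0$, by the local Sobolev and trace inequalities. If $\nu=\omega=0$, your Step~2 gives $F\to+\infty$. Otherwise, take $\phi\equiv1$ near $x_j$ with $\operatorname{supp}\phi\subset B_r(x_j)$, and apply the definition of $A$ to $\phi v_k$ transplanted to $\mathbb{R}^n_+$ (by a flattening chart if $x_j\in\partial\Omega$). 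Let $k\to\infty$ (the cross terms vanish since $v_k\to0$ in $L^2$), use the monotonicity of $F$, then let $r\to0$. This gives $F(\mu_j,\omega_j,\nu_j)\ge A$, with $\mu_j:=\mu(\{x_j\})$, at every atom with $(\nu_j,\omega_j)\neq0$.

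Now $a_k\to\mu(\overline\Omega)\ge\sum_j\mu_j$, $b_k\to\sum_j\omega_j$ and $c_k\to\sum_j\nu_j$. Set $f_j(t):=\frac{t^2}{2}\mu_j-\frac{t^{2^\sharp}}{2^\sharp}\omega_j-\frac{t^{2^*}}{2^*}\nu_j$. Each $f_j$ is nonnegative up to its maximizer $t_j$. The minimum $t_*:=\min_jt_j$ exists, because $A\le F(\mu_j,\omega_j,\nu_j)\le\frac{t_j^2}{2}\mu_j$ and $\sum_j\mu_j<\infty$ force $t_j\to\infty$. Hence $\lim_kF(a_k,b_k,c_k)\ge F(\sum_j\mu_j,\sum_j\omega_j,\sum_j\nu_j)\ge\sum_jf_j(t_*)\ge\min_jF(\mu_j,\omega_j,\nu_j)\ge A$. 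This is the rigorous form of your final sentence.

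Note also that Step~1 proves the claim only along the subsequence with $\|v_k\|\ge c$. That is the right reading of the lemma, which fails as literally stated. Interleave constants $v_{2k}\equiv1/k$ with a concentrating sequence: then $v_k\rightharpoonup0$ and $v_k\not\to0$, but $\sup_{t>0}\Psi_\Omega(tv_{2k})=0$.
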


The family $V_{\varepsilon,z}$ introduced in~\eqref{eq:Vepsxi}
is defined on the model half-space $\mathbb{R}^n_+$
with the fixed normal direction $e_n$.
Following Adimurthi--Yadava~\cite{AY}, we extend this construction to
a boundary point $\xi\in\partial\Omega$.
We use an orthonormal frame adapted to $\partial\Omega$ at $\xi$,
whose last vector is the inward normal direction
$-\nu(\xi)$.
The corresponding bubble is denoted by $V_{\varepsilon,\xi}$.
The construction is independent of the choice of tangential vectors.
When $\xi=z\in\partial\mathbb{R}^n_+$ and the standard frame is used,
$V_{\varepsilon,\xi}$ coincides with $V_{\varepsilon,z}$.

For $x_{0}\in\R^{n}$ and an orthonormal frame
$f(x_{0})=\{e_{1}(x_{0}),\dots,e_{n}(x_{0})\}$ at~$x_{0}$, set
\begin{equation}\label{eq:Pi_def}
\begin{aligned}
	&\Lambda(x_{0}):=\bigl\{x\in\R^{n}:\langle x-x_{0},e_{n}(x_{0})\rangle>0\bigr\},\\
&y_{j}:=\langle x-x_{0},e_{j}(x_{0})\rangle\ \ (1\le j\le n).
\end{aligned}
\end{equation}
Then $\Lambda(x_{0})$ is the open half space on the
$e_{n}(x_{0})$ side of~$x_{0}$, and $y=(y_{1},\dots,y_{n})$ are the
intrinsic affine coordinates determined by $f(x_{0})$, in which
$\Lambda(x_{0})=\{y_{n}>0\}$. For $\varepsilon>0$, define
\begin{equation}\label{equa2.9_general}
V_{\varepsilon,f(x_{0}),x_{0}}(x):=\varepsilon^{-(n-2)/2}\,V_{1}\!\Bigl(\frac{y}{\varepsilon}\Bigr),
\end{equation}
where $V_{1}$ is the half space ground state of~\eqref{eq:half_space_critical},
\begin{equation}\label{eq:V1_extension}
V_{1}(y)=C_{n}\bigl(1+|y'|^{2}+(y_{n}+x_{n}^{0})^{2}\bigr)^{-(n-2)/2},
\qquad y\in\R^{n}.
\end{equation}
By Lemma~\ref{lem2.1} and the change of variables
	$y\mapsto y/\varepsilon$, the function $V_{\varepsilon,f(x_{0}),x_{0}}$
	is positive and $C^{\infty}$ on $\R^{n}$ and satisfies
\begin{equation}\label{eq:bubble_PDE}
	\begin{cases}
		-\Delta V_{\varepsilon,f(x_{0}),x_{0}}
		=V_{\varepsilon,f(x_{0}),x_{0}}^{\,2^{*}-1}
		& \text{in }\Lambda(x_{0}),\\[2pt]
		\dfrac{\partial V_{\varepsilon,f(x_{0}),x_{0}}}{\partial\nu}
		=V_{\varepsilon,f(x_{0}),x_{0}}^{\,2^{\sharp}-1}
		& \text{on }\partial\Lambda(x_{0}),
	\end{cases}
\end{equation}
where $\nu(x_{0})=-e_{n}(x_{0})$ is the outward unit normal to
	$\Lambda(x_{0})$.

We now state an important lemma

\begin{Lemma}\label{lem:frame_indep}
	For every $\varepsilon>0$ and $x_{0}\in\R^{n}$,
		$V_{\varepsilon,f(x_{0}),x_{0}}$ depends only on $e_{n}(x_{0})$
		and not on the tangential vectors $\{e_{1}(x_{0}),\dots,e_{n-1}(x_{0})\}$.
\end{Lemma}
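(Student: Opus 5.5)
The plan is to show that the two quantities entering $V_1(y/\varepsilon)$, namely $|y'|^2$ and $y_n$, are each determined by $x-x_0$ and $e_n(x_0)$ alone. Formula \eqref{equa2.9_general} combined with \eqref{eq:V1_extension} gives
\[
V_{\varepsilon,f(x_{0}),x_{0}}(x)=\varepsilon^{-(n-2)/2}C_{n}\Bigl(1+\frac{|y'|^{2}}{\varepsilon^{2}}+\Bigl(\frac{y_{n}}{\varepsilon}+x_{n}^{0}\Bigr)^{2}\Bigr)^{-(n-2)/2}.
\]
So the coordinates enter only through $|y'|^2=\sum_{j=1}^{n-1}y_j^2$ and through $y_n$.

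The normal coordinate $y_n=\langle x-x_0,e_n(x_0)\rangle$ involves only $e_n(x_0)$ by its definition in \eqref{eq:Pi_def}. For the tangential part I use that $f(x_0)$ is orthonormal. Parseval's identity in $\R^n$ then gives $|x-x_0|^2=\sum_{j=1}^{n}\langle x-x_0,e_j(x_0)\rangle^2$, and hence
\[
|y'|^{2}=|x-x_{0}|^{2}-\langle x-x_{0},e_{n}(x_{0})\rangle^{2}.
\]
This is the squared length of the orthogonal projection of $x-x_0$ onto $e_n(x_0)^{\perp}$, and it depends only on $e_n(x_0)$.

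Substituting both expressions gives the closed form
\[
V_{\varepsilon,f(x_{0}),x_{0}}(x)=\varepsilon^{-(n-2)/2}C_{n}\Bigl(1+\frac{|x-x_0|^{2}-\langle x-x_{0},e_{n}\rangle^{2}}{\varepsilon^{2}}+\Bigl(\frac{\langle x-x_0,e_n\rangle}{\varepsilon}+x_{n}^{0}\Bigr)^{2}\Bigr)^{-(n-2)/2},
\]
with $e_n=e_n(x_0)$. This contains no $e_1(x_0),\dots,e_{n-1}(x_0)$. Equivalently, two frames sharing $e_n(x_0)$ differ by an orthogonal map $Q\in O(n-1)$ acting on $y'$. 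Since $V_1$ is radial in $y'$, it is invariant under $Q$, and the two bubbles coincide. There is no real obstacle here; the only point to check is that orthonormality of the whole frame is what makes the Parseval step valid.
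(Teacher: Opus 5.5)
Your proof is correct and rests on the same fact as the paper's: $V_1$ depends on the tangential coordinates only through $|y'|^2$, and this quantity is unchanged when the tangential frame is rotated. The paper argues through the $O(n-1)$ change of frame that you mention at the end, while your main route uses Parseval's identity $|y'|^2=|x-x_0|^2-\langle x-x_0,e_n(x_0)\rangle^2$ to write down a frame-free formula directly; the paper uses the same orthonormal-frame identity later, in the proof of Lemma~\ref{lem:final_estimate}, to rewrite $V_{\delta_k,P_k}$ as a classical Aubin--Talenti bubble.
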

\begin{proof}
	By the explicit formula~\eqref{eq:V1}, the function $V_1$ depends on
	$y=(y',y_n)$ only through $|y'|^2$ and $y_n$.
	Let
	\[
	f(x_0)=\{e_1(x_0),\ldots,e_{n-1}(x_0),e_n(x_0)\}
	\]
	and
	\[
	f'(x_0)=\{e'_1(x_0),\ldots,e'_{n-1}(x_0),e_n(x_0)\}
	\]
	be two orthonormal frames with the same normal vector.
	Then their tangential parts are related by an orthogonal matrix
	$R\in O(n-1)$, namely,
	\[
	e_j=\sum_{k=1}^{n-1}R_{jk}e'_k,
	\qquad 1\le j\le n-1 .
	\]
	The corresponding coordinates satisfy
$
	y_j=\sum_{k=1}^{n-1}R_{jk}y'_k .
$
	Hence,
		\[
	\sum_{j=1}^{n-1}y_{j}^{2}
	=\sum_{k,\ell}(R^{T}R)_{k\ell}\,y_{k}'y_{\ell}'
	=\sum_{k=1}^{n-1}(y_{k}')^{2}.
	\]
	Since the normal coordinate $y_n$ remains unchanged, we obtain
	\[
	V_{\varepsilon,f_{x_0},x_0}(x)
	=
	V_{\varepsilon,f'_{x_0},x_0}(x).
	\]
	Therefore, the definition of $V_{\varepsilon,\xi}$ is independent of
	the choice of the tangential frame.
\end{proof}

	We now specialize the construction to a boundary
point of~$\Omega$.
\begin{Definition}\label{def:bubble}\label{def:Pi}
	Let $\xi\in\partial\Omega$, let $e_{n}(\xi):=-\nu(\xi)$ be
		the inward unit normal to $\partial\Omega$ at~$\xi$, and let
		$\{e_{1}(\xi),\dots,e_{n-1}(\xi)\}$ be any orthonormal basis of
		$T_{\xi}\partial\Omega$, set $f(\xi):=\{e_{1}(\xi),\dots,e_{n}(\xi)\}$.
		For $\varepsilon>0$, the \emph{boundary bubble of
			scale~$\varepsilon$ centred at~$\xi$} is
	\begin{equation}\label{equa2.9}
		V_{\varepsilon,\xi}(x):=V_{\varepsilon,f(\xi),\xi}(x).
	\end{equation}
	By Lemma~\ref{lem:frame_indep}, $V_{\varepsilon,\xi}$ is well
		defined and depends only on $\varepsilon$ and~$\xi$.
\end{Definition}

In the special case $\xi=z=(z',0)\in\partial\mathbb{R}^{n}_{+}$
	with the standard frame $f(z)=\{e_1,\ldots,e_n\}$, the rigid motion
	$T_z$ reduces to the translation $y=x-z$, so
	\begin{equation*}
	 V_{\varepsilon,f(z),z}(x)=\varepsilon^{-(n-2)/2}V_1((x-z)/\varepsilon)
	=V_{\varepsilon,z}(x),
	\end{equation*} hence Definition~\ref{def:bubble} and
	\eqref{eq:Vepsxi} are consistent.

To make explicit the isometric identification between
	$\mathbb{R}^n_+$ and the tangent half-space $\Lambda(\xi)$ that
	underlies the construction, we record the associated rigid transformation.
The rigid transformation associated with $f(\xi)$ is
\begin{equation}\label{eq:Txi_def}
	T_{\xi}(y_{1},\dots,y_{n})
	:=\xi+\sum_{j=1}^{n}y_{j}\,e_{j}(\xi).
\end{equation}
$T_{\xi}$ is an orientation-preserving isometry with $T_{\xi}(0)=\xi$
and $T_{\xi}(\mathbb{R}^n_+)=\Lambda(\xi)$.  Its inverse,
\begin{equation}\label{eq:Txi_inv}
	T_{\xi}^{-1}(x)
	=\bigl(\langle x-\xi,e_{1}(\xi)\rangle,\dots,
	\langle x-\xi,e_{n}(\xi)\rangle\bigr),
\end{equation}
gives  $y=T_{\xi}^{-1}(x)$ in which
$V_{\varepsilon,\xi}(x)=\varepsilon^{-(n-2)/2}V_{1}(y/\varepsilon)$.

\begin{Definition}\label{def:M}
	The  bubble manifold is
	\begin{equation}\label{eq:M_def}
		\mathcal M
		:=\bigl\{C V_{\varepsilon,\xi} : C\in \R,\ \varepsilon>0,\ \xi\in\partial\Omega\,\bigr\}.
	\end{equation}
\end{Definition}


\section{Existence of least energy solutions}
\label{subsec:nehari}
In this section, we develop the variational framework based on the
Nehari manifold. We prove the existence of least energy solutions for
$\alpha\in[0,\alpha_{0})$ and characterize the threshold parameter
$\alpha_{0}$ at which the least energy level reaches the critical level
$A$.

The Nehari manifold associated with $I_{\alpha}$ is
\begin{equation}\label{eq:Nehari_def}
	\mathcal{N}_{\alpha}
	:=\{u\in H^{1}(\Omega)\setminus\{0\}\,:\,
	\langle I'_{\alpha}(u),u\rangle=0\}.
\end{equation}
Define $J_{\alpha}\colon H^{1}(\Omega)\to\mathbb{R}$ by
\begin{equation}\label{eq:nehari-functional}
	J_{\alpha}(u)
	:=\langle I'_{\alpha}(u),u\rangle
	=\|u\|^{2}+\alpha\|u\|_{L^{2^\sharp}(\Omega)}^{2^{\sharp}}
	-\|u\|_{L^{2^*}(\Omega)}^{2^{*}}
	-\|u\|_{L^{2^\sharp}(\partial\Omega)}^{2^{\sharp}},
\end{equation}
so that
$\mathcal{N}_{\alpha}
=\{u\in H^{1}(\Omega)\setminus\{0\}:J_{\alpha}(u)=0\}$.

\begin{Lemma}\label{lem:nehari-fibring}
	Let $\alpha\geq 0$.
	\begin{enumerate}
		\item[\normalfont(i)] For every $u\in H^{1}(\Omega)\setminus\{0\}$,
		there exists a unique $t_{\alpha}(u)>0$ such that
		$t_{\alpha}(u)\,u\in\mathcal{N}_{\alpha}$.
		Setting
		\begin{equation}\label{eq:fibring-coeffs}
			a(u):=\|u\|^{2},\quad
			b(u):=\|u\|_{L^{2^\sharp}(\partial\Omega)}^{2^{\sharp}}
			-\alpha\|u\|_{L^{2^\sharp}(\Omega)}^{2^{\sharp}},\quad
			c(u):=\|u\|_{L^{2^*}(\Omega)}^{2^{*}},
		\end{equation}
		the unique projector is given explicitly by
		\begin{equation}\label{eq:fibring-formula}
			t_{\alpha}(u)
			=\left(\frac{-b(u)+\sqrt{b(u)^{2}+4\,a(u)\,c(u)}}
			{2\,c(u)}\right)^{(n-2)/2}.
		\end{equation}
		Moreover, $t\mapsto I_{\alpha}(tu)$ attains its strict global
		maximum on $(0,+\infty)$ at $t=t_{\alpha}(u)$, so
		\begin{equation}\label{eq:fibring-sup}
			I_{\alpha}\bigl(t_{\alpha}(u)\,u\bigr)
			=\sup_{t>0}I_{\alpha}(tu).
		\end{equation}
		
		\item[\normalfont(ii)] $\mathcal{N}_{\alpha}$ is a $C^{1}$
		submanifold of $H^{1}(\Omega)$ of codimension one.
		The fibring map $u\mapsto t_\alpha(u)\,u$
			is a homeomorphism from the unit sphere of $H^1(\Omega)$
			onto $\mathcal{N}_\alpha$, with inverse $v\mapsto v/\|v\|$.
		
		\item[\normalfont(iii)] If $u\in\mathcal{N}_\alpha$
			is a critical point of $I_\alpha\big|_{\mathcal{N}_\alpha}$,
			then $u$ is a critical point of $I_\alpha$ on $H^1(\Omega)$,
			hence a weak solution of~\eqref{eq:main}.
	\end{enumerate}
\end{Lemma}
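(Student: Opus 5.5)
The plan is to reduce everything to a one-variable algebraic identity, using the fact that both perturbing exponents are tied to $2/(n-2)$: $2^{\sharp}-2=\frac{2}{n-2}$ and $2^{*}-2=\frac{4}{n-2}$.

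\emph{Part (i).} Fix $u\neq0$ and set $g(t):=I_\alpha(tu)$. Then
\[
g'(t)=t\,a(u)-t^{2^\sharp-1}b(u)-t^{2^*-1}c(u)
=t\bigl(a(u)-b(u)s-c(u)s^{2}\bigr),
\qquad s:=t^{2/(n-2)}.
\]
Put $h(s):=a-bs-cs^{2}$. Since $u\neq0$, we have $a(u)>0$ and $c(u)>0$, while $b(u)$ may have either sign. Thus $h$ is a concave quadratic with $h(0)=a>0$. It therefore has exactly one positive root,
\[
s_{+}=\frac{-b+\sqrt{b^{2}+4ac}}{2c},
\]
and $h>0$ on $(0,s_{+})$, $h<0$ on $(s_{+},\infty)$. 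Since $s\mapsto t=s^{(n-2)/2}$ is an increasing bijection of $(0,\infty)$, $g'$ is positive before $t_\alpha(u)=s_+^{(n-2)/2}$ and negative after it. This gives uniqueness, the formula \eqref{eq:fibring-formula}, and the strict global maximum \eqref{eq:fibring-sup}. Note also that $tu\in\mathcal N_\alpha$ if and only if $g'(t)=0$, because $J_\alpha(tu)=t\,g'(t)$.

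\emph{Part (ii).} Since $I_\alpha\in C^2$, the map $J_\alpha$ is $C^1$. For $u\in\mathcal N_\alpha$ we have $s_+=1$, so $J_\alpha(tu)=t^2h(t^{2/(n-2)})$. Differentiating at $t=1$ gives
\[
\langle J_\alpha'(u),u\rangle=\tfrac{2}{n-2}\,h'(1)=-\tfrac{2}{n-2}(b+2c)=-\tfrac{2}{n-2}\sqrt{b^{2}+4ac}<0 .
\]
So $J_\alpha'(u)\neq0$ on $\mathcal N_\alpha$, and $\mathcal N_\alpha=J_\alpha^{-1}(0)\setminus\{0\}$ is a $C^1$ codimension-one submanifold.

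Next I show that $\mathcal N_\alpha$ is bounded away from $0$. On $\mathcal N_\alpha$ we have
\[
\|u\|^2\le \|u\|_{2^*,\Omega}^{2^*}+\|u\|_{2^\sharp,\partial\Omega}^{2^\sharp}\le C\bigl(\|u\|^{2^*}+\|u\|^{2^\sharp}\bigr)
\]
by the Sobolev and trace embeddings. Since $2^*,2^\sharp>2$, this yields $\|u\|\ge\delta>0$. Hence $\mathcal N_\alpha$ is closed in $H^1(\Omega)$ and $v\mapsto v/\|v\|$ is continuous on it.

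For the homeomorphism, $a,b,c$ are continuous on the unit sphere and $c>0$ there, so $t_\alpha$ is continuous and $u\mapsto t_\alpha(u)u$ is continuous. By uniqueness in (i) together with $t_\alpha(\mu u)=t_\alpha(u)/\mu$ for $\mu>0$, the two maps are mutually inverse.

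\emph{Part (iii).} By the Lagrange multiplier rule, if $u$ is a critical point of $I_\alpha|_{\mathcal N_\alpha}$, then $I_\alpha'(u)=\mu J_\alpha'(u)$ for some $\mu\in\R$. Testing with $u$ gives
\[
0=J_\alpha(u)=\mu\langle J_\alpha'(u),u\rangle .
\]
Since $\langle J_\alpha'(u),u\rangle<0$ by part (ii), it follows that $\mu=0$. Hence $I_\alpha'(u)=0$, i.e. $u$ is a weak solution \eqref{eq1.2}.

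The only step with real content is the sign computation $\langle J'_\alpha(u),u\rangle<0$. It hinges on rewriting $b+2c$ as the discriminant root, since $b$ has no sign when $\alpha>0$. I expect no genuine obstacle beyond this.
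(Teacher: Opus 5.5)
Your proposal is correct and follows the same route as the paper. Both reduce $g'(t)=0$ to the quadratic $c s^2+bs-a=0$ in $s=t^{2/(n-2)}$, use the strict sign of $\langle J_\alpha'(u),u\rangle$ on $\mathcal N_\alpha$ for the submanifold structure and the Lagrange-multiplier step, and use continuity of $t_\alpha$ for the homeomorphism. The only cosmetic difference is that you obtain $\langle J_\alpha'(u),u\rangle=-\tfrac{2}{n-2}\sqrt{b^2+4ac}$ by differentiating $t^2h(t^{2/(n-2)})$ at $t=1$, whereas the paper uses the identity $2^\sharp J_\alpha(u)-\langle J_\alpha'(u),u\rangle=(2^\sharp-2)\|u\|^2+(2^*-2^\sharp)\|u\|_{2^*,\Omega}^{2^*}$. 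Both expressions equal $-\tfrac{2}{n-2}(a+c)$ on $\mathcal N_\alpha$.
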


\begin{proof}
	(i) Fix $u\in H^{1}(\Omega)\setminus\{0\}$
	and consider $g(t):=I_{\alpha}(tu)$ for $t>0$. A direct
	computation yields
	\[
	g'(t)=t\bigl(a(u)-t^{2^{\sharp}-2}b(u)-t^{2^{*}-2}c(u)\bigr),
	\]
	with $a(u)>0$ and $c(u)>0$.  Using the algebraic identity
		$2^{*}-2=\frac{4}{n-2}=2(2^{\sharp}-2)$, we set $\tau:=t^{2^{\sharp}-2}=t^{2/(n-2)}$. For $t>0$, the equation $g'(t)=0$ can be rewritten as the quadratic equation
		\begin{equation}\label{eq:fibring-quad}
			c(u)\tau^{2}+b(u)\tau-a(u)=0.
		\end{equation}
	Since $a(u),c(u)>0$, equation \eqref{eq:fibring-quad} has a
	unique positive root
	\[
	\tau_\alpha(u)=\frac{-b(u)+\sqrt{b(u)^{2}+4a(u)c(u)}}{2c(u)},
	\]
	which gives \eqref{eq:fibring-formula} with
	$t_{\alpha}(u)=\tau_{\alpha}(u)^{(n-2)/2}$.  Hence $g$ attains its strict global
	maximum on $(0,+\infty)$ at $t_\alpha(u)$.
	
	\smallskip
	(ii)  We show that $J_{\alpha}'(u)\neq 0$ for every
		$u\in\mathcal{N}_{\alpha}$.  Assume, by contradiction, that there exists
		$u\in\mathcal{N}_{\alpha}$ such that
		\[
		\langle J'_{\alpha}(u),u\rangle=0
		\]
	A direct computation gives the algebraic identity
	\begin{equation}\label{eq:Jprime-identity}
		2^{\sharp}J_{\alpha}(u)-\langle J_{\alpha}'(u),u\rangle
		=(2^{\sharp}-2)\|u\|^{2}+(2^{*}-2^{\sharp})\|u\|_{2^{*},\Omega}^{2^{*}},
	\end{equation}
	for every $u\in H^{1}(\Omega)$. Under the assumptions
	\[
	J_{\alpha}(u)=0
	\quad\text{and}\quad
	\langle J_{\alpha}'(u),u\rangle=0,
	\]
	the left-hand side of \eqref{eq:Jprime-identity} is equal to zero.
	Since $2<2^{\sharp}<2^{*}$, both coefficients appearing on the right hand side are strictly positive. Therefore, we obtain
	$u\equiv0$, which contradicts the assumption.
	Hence
	$J'_{\alpha}(u)\neq 0$ on $\mathcal{N}_{\alpha}$. Since $0$ is  a regular value of $J_{\alpha}$ on
	$H^{1}(\Omega)\setminus\{0\}$, the regular value theorem
	ensures that $\mathcal{N}_{\alpha}$ is a $C^{1}$ submanifold
	of codimension one in $H^{1}(\Omega)$.
	
	By \eqref{eq:fibring-formula} and the continuity of the Sobolev
	and trace embeddings, the map
	$u\mapsto t_{\alpha}(u)$ is continuous on
	$H^{1}(\Omega)\setminus\{0\}$. Consequently, the map $u\mapsto t_{\alpha}(u)\,u$ is
	a homeomorphism from the unit sphere of $H^{1}(\Omega)$ onto
	$\mathcal{N}_{\alpha}$, with inverse $v\mapsto v/\|v\|$. 
	
	\smallskip
	(iii) Let $u\in\mathcal{N}_{\alpha}$ be a critical point of the restriction of
	$I_{\alpha}$ to $\mathcal{N}_{\alpha}$. Since $\mathcal{N}_{\alpha}$ is a
	$C^{1}$-submanifold of $H^{1}(\Omega)$ by (ii), the Lagrange multiplier
	theorem yields a constant $\mu\in\mathbb{R}$ such that
	\[
	I_{\alpha}'(u)=\mu J_{\alpha}'(u).
	\]
	Testing this identity
	against $u$ and using $J_{\alpha}(u)=0$, we obtain
	\[
	0=J_{\alpha}(u)=\langle I_{\alpha}'(u),u\rangle=\mu\langle J_{\alpha}'(u),u\rangle.
	\]
	Using \eqref{eq:Jprime-identity} and the identity
	$J_{\alpha}(u)=0$, we obtain
	\[
	\langle J_{\alpha}'(u),u\rangle
	=-(2^{\sharp}-2)\|u\|^{2}
	-(2^{*}-2^{\sharp})\|u\|_{2^{*},\Omega}^{2^{*}}<0,
	\]
	where the strict inequality follows from $u\neq0$ and
	$2<2^{\sharp}<2^{*}$.
	Hence $\mu=0$, and $u$ is a critical point of $I_{\alpha}$.
\end{proof}

By Lemma~\ref{lem:nehari-fibring}(i), for every
$u\in H^{1}(\Omega)\setminus\{0\}$, the point
$t_{\alpha}(u)u$ belongs to $\mathcal{N}_{\alpha}$, where
$t_{\alpha}(u)$ is the unique maximizer of the fibering map
$t\mapsto I_{\alpha}(tu)$ on $(0,\infty)$. Consequently, the least
energy level admits the characterization
\begin{equation}\label{eq:Salpha_def}
	S_{\alpha}
	:=\inf_{u\in\mathcal{N}_{\alpha}}I_{\alpha}(u)
	=\inf_{u\in H^{1}(\Omega)\setminus\{0\}}
	\sup_{t>0}I_{\alpha}(tu).
\end{equation}

\begin{Lemma}\label{lem:S_le_A}
	For all $\alpha\ge 0$, we have $S_{\alpha}\le A$.
\end{Lemma}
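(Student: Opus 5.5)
The plan is to test $S_\alpha$ with boundary bubbles that are cut off and then shrunk to a point. Because of the minimax characterization \eqref{eq:Salpha_def}, any admissible family of trial functions whose fibering maxima tend to $A$ will give $S_\alpha\le A$. Taking the bubbles in the $\varepsilon\to0$ limit makes all lower order terms, including the absorption term, disappear.

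\textbf{Setup.} Fix $\xi\in\partial\Omega$ and a cutoff $\varphi\in C_c^\infty(B_{2\delta}(\xi))$ with $\varphi\equiv1$ on $B_\delta(\xi)$. Here $\delta$ is small enough that $\partial\Omega\cap B_{2\delta}(\xi)$ is a graph over the tangent plane $T_\xi\partial\Omega$. Set $u_\varepsilon:=\varphi V_{\varepsilon,\xi}\in H^1(\Omega)$ and denote by $g_\varepsilon(t):=I_\alpha(tu_\varepsilon)$ its fibering map.

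\textbf{Step 1: asymptotics of the coefficients.} Flatten the boundary through $T_\xi$ and a graph map, then rescale $x=T_\xi(\varepsilon y)$. I will establish, as $\varepsilon\to0$,
\[
\|\nabla u_\varepsilon\|_{2,\Omega}^2\to\|\nabla V_1\|_{2,\mathbb R^n_+}^2,\qquad \|u_\varepsilon\|_{2^*,\Omega}^{2^*}\to\|V_1\|_{2^*,\mathbb R^n_+}^{2^*},\qquad \|u_\varepsilon\|_{2^\sharp,\partial\Omega}^{2^\sharp}\to\|V_1\|_{2^\sharp,\partial\mathbb R^n_+}^{2^\sharp}.
\]
For the remaining two terms, I will show $\lambda\|u_\varepsilon\|_{2,\Omega}^2\to0$ and $\alpha\|u_\varepsilon\|_{2^\sharp,\Omega}^{2^\sharp}\to0$. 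The second holds because $V_1\in L^{2^\sharp}(\mathbb R^n_+)$ when $n\ge5$ (decay $|y|^{-(n-2)2^\sharp}=|y|^{-2(n-1)}$), and the interior $2^\sharp$-norm scales like $\varepsilon$. The first holds because $V_1\in L^2$ for $n\ge5$, giving $\|u_\varepsilon\|_2^2=O(\varepsilon^2)$.

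The main obstacle is the boundary-curvature and cutoff errors. The rescaled domain $\varepsilon^{-1}T_\xi^{-1}(\Omega)$ converges locally to $\mathbb R^n_+$ with a boundary graph of the form $y_n=O(\varepsilon|y'|^2)$. Combined with the decay of $V_1$ and $\nabla V_1$, dominated convergence applies. This is the same type of estimate underlying Lemma~\ref{lem:expansion}, but here only convergence is needed, not a rate.

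\textbf{Step 2: convergence of the fibering maximum.} Pass these limits into the quadratic formula \eqref{eq:fibring-formula}. The coefficients converge: $a(u_\varepsilon)\to a_0:=\|\nabla V_1\|^2$, $b(u_\varepsilon)\to b_0:=\|V_1\|_{2^\sharp,\partial}^{2^\sharp}$, and $c(u_\varepsilon)\to c_0>0$. Hence $t_\alpha(u_\varepsilon)$ converges to the fibering maximizer $t_0$ of $\Psi_{\mathbb R^n_+}(tV_1)$, and
\[
\sup_{t>0}I_\alpha(tu_\varepsilon)\longrightarrow \sup_{t>0}\Psi_{\mathbb R^n_+}(tV_1)=A,
\]
where the last equality is Lemma~\ref{lem2.1}; since $V_1$ solves \eqref{eq:half_space_critical}, in fact $t_0=1$. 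The convergence of the supremum follows from uniform convergence of $g_\varepsilon$ on compact $t$-intervals, together with the explicit maximizer staying in a compact subset of $(0,\infty)$.

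\textbf{Conclusion.} By \eqref{eq:Salpha_def}, $S_\alpha\le\sup_{t>0}I_\alpha(tu_\varepsilon)$ for every $\varepsilon>0$. Letting $\varepsilon\to0$ gives $S_\alpha\le A$.
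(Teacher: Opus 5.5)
Your proposal is correct and follows essentially the same route as the paper. The paper also tests $S_\alpha$ with the cut-off boundary bubble $\eta V_{\varepsilon,\xi_0}$ and checks that the Nehari coefficients converge to those of $V_1$, with the $\lambda$-term vanishing and the absorption term of order $O(\varepsilon)$. It then deduces $t_\varepsilon\to1$ from the explicit fibering formula and concludes $\sup_{t>0}I_\alpha(t\eta V_{\varepsilon,\xi_0})\to A$. Your dominated-convergence treatment of the cutoff and curvature errors, and your uniform-convergence argument for the fibering supremum, just fill in details the paper leaves to ``standard scaling properties.''
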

\begin{proof}
	Fix $\xi_{0}\in\partial\Omega$ and let
	$\eta\in C_{c}^{\infty}(\mathbb{R}^{n})$ be a smooth cut-off function
	such that $\eta\equiv1$ in a neighbourhood of $\xi_{0}$ and
	$\operatorname{supp}\eta$ is contained in the coordinate neighbourhood
	of $\xi_{0}$. For sufficiently small $\varepsilon>0$, the cut-off
	bubble $\eta V_{\varepsilon,\xi_{0}}$
	belongs to $H^{1}(\Omega)$. By Lemma~\ref{lem:nehari-fibring}(i),
	there exists a unique  $t_{\varepsilon}:=
	t_{\alpha}(\eta V_{\varepsilon,\xi_{0}})>0$
	such that
$t_{\varepsilon}\eta V_{\varepsilon,\xi_{0}}\in\mathcal{N}_{\alpha}.$
	
	By the standard scaling properties of the bubble and the estimates for
	the cut-off function, the following convergence properties hold as
	$\varepsilon\to0^{+}$:
	\begin{align*}
		\|\eta V_{\varepsilon,\xi_{0}}\|^{2}
		&\to
		\|\nabla V_{1}\|_{L^{2}(\mathbb{R}^{n}_{+})}^{2},&
		\|\eta V_{\varepsilon,\xi_{0}}\|_{L^{2^{*}}(\Omega)}^{2^{*}}
		&\to
		\|V_{1}\|_{L^{2^{*}}(\mathbb{R}^{n}_{+})}^{2^{*}},\\
		\|\eta V_{\varepsilon,\xi_{0}}\|_{L^{2^{\sharp}}(\partial\Omega)}^{2^{\sharp}}
		&\to
		\|V_{1}\|_{L^{2^{\sharp}}(\partial\mathbb{R}^{n}_{+})}^{2^{\sharp}},&
		\|\eta V_{\varepsilon,\xi_{0}}\|_{L^{2^{\sharp}}(\Omega)}^{2^{\sharp}}
		&=O(\varepsilon).
	\end{align*}
	By the continuity of the map
	$u\mapsto t_{\alpha}(u)$ and the fibering formula
	\eqref{eq:fibring-formula}, we have
	$t_{\varepsilon}\to1$. Substituting the above asymptotics into
	$I_{\alpha}$ yields
	\[
	I_{\alpha}(t_{\varepsilon}\eta V_{\varepsilon,\xi_{0}})
	=
	\Psi_{\mathbb{R}^{n}_{+}}(V_{1})
	+\frac{\alpha}{2^{\sharp}}
	\int_{\Omega}|\eta V_{\varepsilon,\xi_{0}}|^{2^{\sharp}}\,dx
	+o(1)
	=A+o(1).
	\]
	Therefore, letting $\varepsilon\to0^{+}$, we obtain	$S_{\alpha}\le A.$
\end{proof}

\begin{Lemma}\label{lem:existence-below-threshold}
	If $S_{\alpha}<A$, then $S_{\alpha}$ is achieved by some
	$u_{\alpha}\in\mathcal N_{\alpha}$, i.e., \eqref{eq:main} admits a
	least energy solution.
\end{Lemma}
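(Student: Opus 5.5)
We argue by the direct method on the Nehari manifold, using Lemma~\ref{lem:local_compactness} to exclude loss of compactness. Take a minimizing sequence $\{u_k\}\subset\mathcal N_\alpha$ with $I_\alpha(u_k)\to S_\alpha$. Replacing $u_k$ by $|u_k|$ changes neither $I_\alpha$ nor $J_\alpha$, so we may assume $u_k\ge0$. On $\mathcal N_\alpha$ we have
\[
I_\alpha(u)=\Bigl(\tfrac12-\tfrac1{2^\sharp}\Bigr)\|u\|^2+\Bigl(\tfrac1{2^\sharp}-\tfrac1{2^*}\Bigr)\|u\|_{2^*,\Omega}^{2^*},
\]
which is obtained by subtracting $\frac1{2^\sharp}J_\alpha(u)$. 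Both coefficients are positive, so $\{u_k\}$ is bounded in $H^1(\Omega)$.

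We also need $S_\alpha>0$. On $\mathcal N_\alpha$ the Sobolev and trace embeddings give
\[
\|u\|^2\le \|u\|_{2^*,\Omega}^{2^*}+\|u\|_{2^\sharp,\partial\Omega}^{2^\sharp}\le C(\|u\|^{2^*}+\|u\|^{2^\sharp}).
\]
Hence $\|u\|\ge c_0>0$, and therefore $S_\alpha\ge(\frac12-\frac1{2^\sharp})c_0^2>0$. Passing to a subsequence, $u_k\rightharpoonup u$ weakly in $H^1(\Omega)$, strongly in $L^2(\Omega)$ and $L^{2^\sharp}(\Omega)$ (since $2^\sharp<2^*$), and a.e.

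The first step is to show $u\ne0$. Suppose instead that $u_k\rightharpoonup0$. Then $\|u_k\|_{2,\Omega}\to0$ and $\|u_k\|_{2^\sharp,\Omega}\to0$, so $I_\alpha(tu_k)-\Psi_\Omega(tu_k)\to0$ uniformly for $t$ in bounded sets. The maximizers $t$ of $\Psi_\Omega(tu_k)$ stay bounded because $\|u_k\|$ is bounded away from zero. Thus
\[
\sup_{t>0}\Psi_\Omega(tu_k)=\sup_{t>0}I_\alpha(tu_k)+o(1)=I_\alpha(u_k)+o(1)\to S_\alpha<A,
\]
where we used Lemma~\ref{lem:nehari-fibring}(i). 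By Lemma~\ref{lem:local_compactness} this forces $u_k\to0$ strongly, which contradicts $\|u_k\|\ge c_0$.

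The main step is to show $I_\alpha(t_\alpha(u)u)\le S_\alpha$, so that $t_\alpha(u)u\in\mathcal N_\alpha$ is a minimizer. Set $v_k=u_k-u\rightharpoonup0$. By Brezis--Lieb for the $L^{2^*}(\Omega)$ and $L^{2^\sharp}(\partial\Omega)$ terms, and by strong convergence of the lower order terms,
\[
I_\alpha(tu_k)=I_\alpha(tu)+\Psi_\Omega(tv_k)+o(1),\qquad
J_\alpha(u_k)=J_\alpha(u)+\langle\Psi_\Omega'(v_k),v_k\rangle+o(1).
\]

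\emph{Case $J_\alpha(u)\le0$.} Then $t_\alpha(u)\le1$. For $t\le1$ the quantity
\[
I_\alpha(tw)-\tfrac1{2^\sharp}J_\alpha(tw)=\Bigl(\tfrac12-\tfrac1{2^\sharp}\Bigr)t^2\|w\|^2+\Bigl(\tfrac1{2^\sharp}-\tfrac1{2^*}\Bigr)t^{2^*}\|w\|_{2^*,\Omega}^{2^*}
\]
is nondecreasing in $t$ and weakly lower semicontinuous in $w$. Hence
\[
I_\alpha(t_\alpha(u)u)\le\Bigl(\tfrac12-\tfrac1{2^\sharp}\Bigr)\|u\|^2+\Bigl(\tfrac1{2^\sharp}-\tfrac1{2^*}\Bigr)\|u\|_{2^*,\Omega}^{2^*}\le\liminf_k I_\alpha(u_k)=S_\alpha .
\]

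\emph{Case $J_\alpha(u)>0$.} Then $\langle\Psi_\Omega'(v_k),v_k\rangle<0$ for large $k$, so the Nehari projection $s_k$ of $v_k$ for $\Psi_\Omega$ satisfies $s_k<1$. The same monotone-quantity argument, now applied to $\Psi_\Omega$, gives
\[
\sup_{t>0}\Psi_\Omega(tv_k)\le\Bigl(\tfrac12-\tfrac1{2^\sharp}\Bigr)\|\nabla v_k\|_2^2+\Bigl(\tfrac1{2^\sharp}-\tfrac1{2^*}\Bigr)\|v_k\|_{2^*,\Omega}^{2^*},
\]
and by Brezis--Lieb the right-hand side is at most $S_\alpha-c_u+o(1)$, where $c_u>0$ is the corresponding positive expression for $u$. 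This bound is strictly below $A$. Lemma~\ref{lem:local_compactness} then yields $v_k\to0$ strongly, so $u_k\to u$ in $H^1(\Omega)$. Passing to the limit gives $J_\alpha(u)=0$ and $I_\alpha(u)=S_\alpha$, which contradicts $J_\alpha(u)>0$; so this case cannot occur.

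Thus $S_\alpha$ is attained at some $u_\alpha\ge0$. By Lemma~\ref{lem:nehari-fibring}(iii), $u_\alpha$ is a weak solution of \eqref{eq:main}. Elliptic regularity for critical Neumann problems (Moser iteration / Brezis--Kato) and the strong maximum principle together with Hopf's lemma then give $u_\alpha>0$ in $\overline\Omega$. The main obstacle is the splitting argument in the case $J_\alpha(u)>0$, where the Brezis--Lieb bookkeeping must be done carefully with the $\alpha$ term, which is harmless because it converges strongly.
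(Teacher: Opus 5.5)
Your proof is correct, but it takes a different route from the paper's.

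\textbf{The paper's route.} The paper first upgrades the minimizing sequence, via Ekeland's principle on the $C^1$ manifold $\mathcal N_\alpha$, to a Palais--Smale sequence with $I_\alpha'(u_k)\to0$ in $H^1(\Omega)^*$. The weak limit $u$ is then automatically a critical point, so $J_\alpha(u)=0$ and $I_\alpha(u)\ge0$. The Brezis--Lieb splitting then gives directly $\Psi_\Omega(v_k)\le S_\alpha+o(1)$ and $\langle\Psi_\Omega'(v_k),v_k\rangle=o(1)$. Hence the $\Psi_\Omega$-projection of $v_k$ tends to $1$, and Lemma~\ref{lem:local_compactness} gives $v_k\to0$ strongly. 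Finally, $u\neq0$ follows from the uniform bound $\|u\|\ge\delta$ on the Nehari manifold.

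\textbf{Your route.} You work with a plain minimizing sequence, so you never know that $u$ is a critical point. You compensate in two ways:
\begin{itemize}
\item You exclude $u=0$ beforehand by comparing $\sup_t\Psi_\Omega(tu_k)$ with $\sup_t I_\alpha(tu_k)=I_\alpha(u_k)$. Here the pointwise inequality $\Psi_\Omega\le I_\alpha$ already suffices, so the bound on the $\Psi_\Omega$-maximizers is not needed.
\item You split according to the sign of $J_\alpha(u)$. This exploits the fact that in $I_\alpha-\frac1{2^\sharp}J_\alpha$ the $\alpha$-term and the boundary term cancel exactly, since both carry the exponent $2^\sharp$. What remains is positive, nondecreasing along rays, and weakly lower semicontinuous.
\end{itemize}

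\textbf{What each approach buys.} Your argument is more elementary and self-contained. It avoids a step the paper leaves implicit: showing that the Lagrange multipliers produced by Ekeland's principle on $\mathcal N_\alpha$ vanish asymptotically, which requires $\langle J_\alpha'(u_k),u_k\rangle\le-c<0$. In the case $J_\alpha(u)\le0$ it produces the minimizer $t_\alpha(u)u$ without first proving strong convergence. You also settle positivity of the minimizer (replacing $u_k$ by $|u_k|$, then the strong maximum principle and Hopf's lemma), which the paper does not discuss even though positivity is part of problem \eqref{eq:main}. The paper's route is shorter once the Palais--Smale property is granted, and it yields strong $H^1$-convergence of the whole minimizing sequence with a single application of Lemma~\ref{lem:local_compactness} and no case distinction.
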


\begin{proof}
	Let $u\in\mathcal{N}_{\alpha}$. By the definition of the Nehari
	manifold, the following  identity holds:
	\[
	\|u\|^{2}+\alpha\|u\|_{2^{\sharp},\Omega}^{2^{\sharp}}
	=\|u\|_{2^{*},\Omega}^{2^{*}}
	+\|u\|_{2^{\sharp},\partial\Omega}^{2^{\sharp}}.
	\]
		Applying the Sobolev and trace embeddings, we obtain
		\[
		\|u\|^{2}\leq\|u\|_{2^{*},\Omega}^{2^{*}}+\|u\|_{2^{\sharp},\partial\Omega}^{2^{\sharp}}
		\leq C\bigl(\|u\|^{2^{*}}+\|u\|^{2^{\sharp}}\bigr).
		\]
		Since $2^{*},2^{\sharp}>2$, we conclude that
		$\|u\|\geq\delta$ for some $\delta=\delta(\Omega,\lambda)>0$
		independent of $\alpha$.
		Moreover,
		\begin{equation}\label{eq:Nehari_energy}
			\begin{aligned}
				I_{\alpha}(u)
				&=I_{\alpha}(u)-\frac{1}{2^{\sharp}}
				\langle I'_{\alpha}(u),u\rangle \\
				&=\frac{1}{2(n-1)}\|u\|^{2}
				+\frac{n-2}{2n(n-1)}
				\|u\|_{2^{*},\Omega}^{2^{*}},
			\end{aligned}
		\end{equation}
		and therefore
		\begin{equation*}
			I_\alpha(u)\geq\frac{1}{2(n-1)}\delta^{2}>0
			\quad\text{for every }u\in\mathcal{N}_\alpha.
		\end{equation*}
		Thus, we obtain $S_{\alpha}>0$.
		
Let $\{u_k\}\subset\mathcal{N}_{\alpha}$ be a minimizing
sequence for $S_{\alpha}$. Since $\mathcal{N}_{\alpha}$ is a
$C^{1}$ submanifold of $H^{1}(\Omega)$
and $I_{\alpha}|_{\mathcal{N}_{\alpha}}\ge 0$, Ekeland's variational
principle yields, after passing to a subsequence, still denoted by $\{u_k\}$, satisfying
	\begin{equation}\label{eq:PS}
		I_\alpha(u_k)\to S_\alpha,\qquad
		I'_\alpha(u_k)\to 0\quad\text{in }H^{1}(\Omega)^{*}.
	\end{equation}
Applying \eqref{eq:Nehari_energy} to $u_k$, we have
\[
\tfrac{1}{2(n-1)}\|u_k\|^{2}\le I_\alpha(u_k)=S_\alpha+o(1).
\]
It follows that $\{u_k\}$ is bounded in $H^{1}(\Omega)$. Passing to a subsequence,
	\begin{equation}\label{eq:weak-conv}
		\begin{aligned}
			&u_k\rightharpoonup u\ \ \text{weakly in }H^{1}(\Omega),\\
		&u_k\to u\ \ \text{strongly in }L^{s}(\Omega)\ \ \text{for }1\le s<2^{*},\\
		&u_k\to u\ \ \text{a.e.\ in }\Omega.
		\end{aligned}
	\end{equation}
	
	Set $v_k:=u_k-u$, so $v_k\rightharpoonup 0$ in $H^{1}(\Omega)$. The
	Brezis--Lieb lemma  yields
	\begin{align}
		\|u_k\|^{2}&=\|u\|^{2}+\|v_k\|^{2}+o(1),\label{eq:BL-grad}\\
		\|u_k\|_{2^{*},\Omega}^{2^{*}}&=\|u\|_{2^{*},\Omega}^{2^{*}}
		+\|v_k\|_{2^{*},\Omega}^{2^{*}}+o(1),\label{eq:BL-2star}\\
		\|u_k\|_{2^{\sharp},\partial\Omega}^{2^{\sharp}}
		&=\|u\|_{2^{\sharp},\partial\Omega}^{2^{\sharp}}
		+\|v_k\|_{2^{\sharp},\partial\Omega}^{2^{\sharp}}+o(1).\label{eq:BL-bdy}
	\end{align}
	Since $2<2^{\sharp}<2^{*}$, the embedding
	$H^{1}(\Omega)\hookrightarrow L^{2}(\Omega)$ and $H^{1}(\Omega)\hookrightarrow L^{2^{\sharp}}(\Omega)$ is compact, we have
	\begin{equation}\label{eq:vk-interior}
		\|v_k\|_{2^{\sharp},\Omega}^{2^{\sharp}}=	\|v_k\|_{2,\Omega}^{2}=o(1).
	\end{equation}
	Combining \eqref{eq:BL-grad}--\eqref{eq:vk-interior}, we obtain
	\begin{equation}\label{eq:I-split}
		I_\alpha(u_k)=I_\alpha(u)+\Psi_\Omega(v_k)+o(1).
	\end{equation}
	By \eqref{eq:PS},  we obtain $\langle I'_\alpha(u_k),u\rangle\to 0$ as $k\to\infty$.
	Combining this with 
	$u_k\rightharpoonup u$ in $H^1(\Omega)$, we obtain
	\[
	\langle I'_\alpha(u),u\rangle=0.
	\]
	Since $I_\alpha(u)\geq0$, using \eqref{eq:I-split} and the assumption
	$S_\alpha<A$, we have
	\begin{equation}\label{eq:Psi-bound}
		\Psi_\Omega(v_k)\leq S_\alpha+o(1)<A
		\quad\text{for sufficiently large }k.
	\end{equation}
	Combining \eqref{eq:BL-grad}--\eqref{eq:vk-interior} with the identity
	$\langle I'_\alpha(u_k),u_k\rangle=0$ and using
	$\langle I'_\alpha(u),u\rangle=0$, we obtain
	\begin{equation}\label{eq:Psi-prime}
		\langle\Psi'_\Omega(v_k),v_k\rangle
		=\|\nabla v_k\|_{2,\Omega}^{2}
		-\|v_k\|_{2^{*},\Omega}^{2^{*}}
		-\|v_k\|_{2^{\sharp},\partial\Omega}^{2^{\sharp}}
		=o(1).
	\end{equation}
	
Assume, by contradiction, that
$v_k\not\to0$ in $H^{1}(\Omega)$. After passing to a subsequence, we have
$
\|\nabla v_k\|_{2,\Omega}^{2}\ge c>0.
$
Consider the fibering map
$g_k(t):=\Psi_\Omega(tv_k),$ $t>0.$
Its derivative is given by
\[
g'_k(t)
=t\Bigl(
\|\nabla v_k\|_{2,\Omega}^{2}
-t^{2^{*}-2}\|v_k\|_{2^{*},\Omega}^{2^{*}}
-t^{2^{\sharp}-2}\|v_k\|_{2^{\sharp},\partial\Omega}^{2^{\sharp}}
\Bigr).
\]
Since $2^{*},2^{\sharp}>2$, the fibering map $g_k$ admits a unique
critical point $t_k>0$, which is characterized by
\begin{equation}\label{eq:tk-eq}
	\|\nabla v_k\|_{2,\Omega}^{2}
	=t_k^{2^{*}-2}\|v_k\|_{2^{*},\Omega}^{2^{*}}
	+t_k^{2^{\sharp}-2}\|v_k\|_{2^{\sharp},\partial\Omega}^{2^{\sharp}}.
\end{equation}
Combining \eqref{eq:tk-eq} with \eqref{eq:Psi-prime}, we obtain
$t_k\to1.$
Consequently,
\[
\liminf_{k\to\infty}\sup_{t>0}\Psi_\Omega(tv_k)
=
\liminf_{k\to\infty}\Psi_\Omega(t_kv_k)
=
\liminf_{k\to\infty}\Psi_\Omega(v_k)
\leq S_\alpha<A,
\]
which contradicts Lemma~\ref{lem:local_compactness}. Therefore, $v_k\to0$ in $H^{1}(\Omega)$.
Hence $u_k\to u$ strongly in $H^{1}(\Omega)$ and
$\|u\|=\lim_{k\to\infty}\|u_k\|\geq\delta>0,$
so that $u\not\equiv0$. Together with
$\langle I'_\alpha(u),u\rangle=0$, this implies
$u\in\mathcal{N}_\alpha$. By the continuity of $I_\alpha$, we obtain
$I_\alpha(u)=S_\alpha.$
Therefore, $u_\alpha:=u$ is a minimizer.
	
\end{proof}

\begin{Lemma}\label{lem:monotone_continuous}
	The map $\alpha\mapsto S_{\alpha}$ is non-decreasing and continuous on
	$[0,+\infty)$.
\end{Lemma}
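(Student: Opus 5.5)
Monotonicity is immediate from the min-max characterization \eqref{eq:Salpha_def}. For fixed $u\neq0$ and $t>0$, the map $\alpha\mapsto I_\alpha(tu)$ is non-decreasing, because the only $\alpha$-dependent term is $\frac{\alpha}{2^\sharp}t^{2^\sharp}\|u\|_{2^\sharp,\Omega}^{2^\sharp}\ge0$. Taking the supremum over $t>0$ and then the infimum over $u\neq0$ preserves this, so $\alpha\le\beta$ implies $S_\alpha\le S_\beta$.

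For continuity, I would handle each side separately using the explicit fibering maximum. For $u\neq0$, set $m_\alpha(u):=\sup_{t>0}I_\alpha(tu)=I_\alpha(t_\alpha(u)u)$.

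\emph{Upper semicontinuity.} An infimum of continuous functions is upper semicontinuous. Each $\alpha\mapsto m_\alpha(u)$ is continuous, since by \eqref{eq:fibring-formula} the maximizer $t_\alpha(u)$ depends continuously on $b(u)$, which is affine in $\alpha$, and $I_\alpha(tu)$ is jointly continuous in $(\alpha,t)$. Hence $\limsup_{\beta\to\alpha}S_\beta\le S_\alpha$. Together with monotonicity, this already gives right-continuity.

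\emph{Left-continuity.} Fix $\alpha>0$ and $\beta_k\uparrow\alpha$. I need $S_\alpha\le\liminf S_{\beta_k}$. Choose $u_k\in\mathcal N_{\beta_k}$ with $I_{\beta_k}(u_k)\le S_{\beta_k}+1/k\le A+1$, using Lemma~\ref{lem:S_le_A}.
\begin{itemize}
\item By \eqref{eq:Nehari_energy}, $\|u_k\|^2\le 2(n-1)(A+1)$, so $\|u_k\|_{2^\sharp,\Omega}^{2^\sharp}\le C$ uniformly in $k$.
\item For any $t>0$,
\[
I_\alpha(tu_k)=I_{\beta_k}(tu_k)+\tfrac{\alpha-\beta_k}{2^\sharp}t^{2^\sharp}\|u_k\|_{2^\sharp,\Omega}^{2^\sharp}\le I_{\beta_k}(u_k)+C(\alpha-\beta_k)t^{2^\sharp},
\]
where I used $I_{\beta_k}(tu_k)\le I_{\beta_k}(u_k)$, valid because $u_k\in\mathcal N_{\beta_k}$.
\item Evaluate at $t=t_\alpha(u_k)$. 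Then $S_\alpha\le I_{\beta_k}(u_k)+C(\alpha-\beta_k)t_\alpha(u_k)^{2^\sharp}$.
\end{itemize}

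It therefore suffices to bound $t_\alpha(u_k)$ uniformly, and this is the main obstacle. From \eqref{eq:fibring-formula}, with $\tau=t^{2/(n-2)}$ solving $c\tau^2+b\tau-a=0$, I get
\[
\tau\le\frac{|b|+\sqrt{b^2+4ac}}{2c}.
\]
The numerator is uniformly bounded because $a,b,c$ are bounded by the $H^1$ bound. So the difficulty is a uniform lower bound on $c(u_k)=\|u_k\|_{2^*,\Omega}^{2^*}$.

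Since $u_k\in\mathcal N_{\beta_k}$ and $\|u_k\|\ge\delta$, we have $\delta^2\le\|u_k\|_{2^*}^{2^*}+\|u_k\|_{2^\sharp,\partial\Omega}^{2^\sharp}$. This does not immediately exclude $c(u_k)\to0$. If $c(u_k)\to0$ along a subsequence, I would use the inequality $\|u\|_{2^\sharp,\partial\Omega}^{2^\sharp}\le C\|u\|_{2^*,\Omega}^{2^*\theta}\|u\|^{\cdots}$. This follows from the identity
\[
\int_{\partial\Omega}|u|^{2^\sharp}\,d\sigma=-\int_\Omega \mathrm{div}(|u|^{2^\sharp}X)\,dx,
\]
with $X$ a smooth extension of the inward normal $-\nu$, combined with H\"older's inequality:
\[
\int_{\partial\Omega}|u|^{2^\sharp}\,d\sigma\le C\int_\Omega |u|^{2^\sharp-1}|\nabla u|+C\int_\Omega|u|^{2^\sharp}\le C\|u\|_{2^*}^{2^*/2}\|\nabla u\|_2+C\|u\|_{2^\sharp,\Omega}^{2^\sharp}.
\]
Here $2^\sharp-1=2^*/2$ is what makes the first term work. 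Both terms tend to $0$ if $c(u_k)\to0$, since $\|u\|_{2^\sharp,\Omega}\le C\|u\|_{2^*,\Omega}$ on a bounded domain. That contradicts $\delta^2\le c+\|u_k\|^{2^\sharp}_{2^\sharp,\partial\Omega}$.

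Hence $c(u_k)\ge c_0>0$, so $t_\alpha(u_k)$ is uniformly bounded. Letting $k\to\infty$ gives $S_\alpha\le\liminf S_{\beta_k}$, and together with monotonicity and upper semicontinuity this proves continuity on $[0,+\infty)$.
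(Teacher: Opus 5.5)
Your proof is correct, but it takes a different route from the paper's. The difference is that you avoid using actual minimizers.

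\emph{Right-continuity.} The paper splits into two cases. If $S_{\tilde\alpha}=A$, the result is trivial via $S_\alpha\le A$. If $S_{\tilde\alpha}<A$, it tests with a minimizer $u_{\tilde\alpha}$ supplied by Lemma~\ref{lem:existence-below-threshold}. You instead observe that $S_\alpha$ is an infimum of the continuous functions $\alpha\mapsto m_\alpha(u)$, hence upper semicontinuous. This needs no case split and no compactness input.

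\emph{Left-continuity.} The paper again uses exact minimizers $u_j\in\mathcal N_{\alpha_j}$, which are available when $L<A$ (the case $L=A$ is trivial). It then asserts $t_{\tilde\alpha}(u_j)\to1$ from $|\tilde b_j-b_j|\to0$. You use near-minimizers and need only a uniform bound on $t_\alpha(u_k)$. You get this from a lower bound on $\|u_k\|_{2^*,\Omega}^{2^*}$, via the divergence-theorem trace estimate that exploits $2(2^\sharp-1)=2^*$; that estimate is correct.

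\emph{What each route buys.} Your argument makes continuity independent of Lemma~\ref{lem:existence-below-threshold}. It does not even need Lemma~\ref{lem:S_le_A}, since $S_{\beta_k}\le S_\alpha<\infty$ already gives the energy bound.

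\emph{Your ``main obstacle'' can be bypassed.} The root $\tau$ of $c\tau^2+b\tau-a=0$ satisfies $\partial_b\tau=-\tau/\sqrt{b^2+4ac}$. At a Nehari point ($\tau=1$, so $b=a-c$) one has $\sqrt{b^2+4ac}=a+c\ge\delta^2$. Since $b\mapsto\sqrt{b^2+4ac}$ is $1$-Lipschitz, for large $k$ you get
$|\log\tau_\alpha(u_k)|\le 2(\alpha-\beta_k)\|u_k\|_{2^\sharp,\Omega}^{2^\sharp}/\delta^2\to0$.
This requires no lower bound on $c(u_k)$. It is also exactly the uniformity the paper leaves unstated when it claims $s_j\to1$.
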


\begin{proof}
	For $0\leq\alpha_{1}<\alpha_{2}$ and any
	$u\in H^{1}(\Omega)\setminus\{0\}$, we have
	\[
	I_{\alpha_{2}}(u)
	=
	I_{\alpha_{1}}(u)
	+\frac{\alpha_{2}-\alpha_{1}}{2^{\sharp}}
	\int_{\Omega}|u|^{2^{\sharp}}\,dx
	>
	I_{\alpha_{1}}(u).
	\]
	Taking the supremum with respect to $t>0$ and applying
	Lemma~\ref{lem:nehari-fibring}(i), we obtain
	\[
	\sup_{t>0}I_{\alpha_{1}}(tu)
	\leq
	\sup_{t>0}I_{\alpha_{2}}(tu)
	\]
	for every $u\neq0$. Taking the infimum over
	$u\in H^{1}(\Omega)\setminus\{0\}$ yields
	\[
	S_{\alpha_{1}}\leq S_{\alpha_{2}}.
	\]
	
Fix $\tilde\alpha\geq0$ and assume first that
$\alpha_j\searrow\tilde\alpha$. If
$S_{\tilde\alpha}=A$, then monotonicity gives
\[
S_{\tilde\alpha}\leq S_{\alpha_j}\leq A=S_{\tilde\alpha},
\]
hence
$
S_{\alpha_j}=S_{\tilde\alpha}
$
for all $j$, and the conclusion is immediate.
	If $S_{\tilde\alpha}<A$, Lemma~\ref{lem:existence-below-threshold}
	yields a minimizer $u_{\tilde\alpha}\in\mathcal{N}_{\tilde\alpha}$.
	For each $j$,
	by Lemma~\ref{lem:nehari-fibring}\,(i), there is a unique
	$t_{j}:=t_{\alpha_{j}}(u_{\tilde\alpha})>0$ with
	$t_{j}u_{\tilde\alpha}\in\mathcal{N}_{\alpha_{j}}$, and the explicit
	formula \eqref{eq:fibring-formula} gives
	\[
	t_{j}=\left(\frac{-b_{j}+\sqrt{b_{j}^{2}+4\,a\,c}}{2c}\right)^{(n-2)/2},
	\qquad b_{j}:=\|u_{\tilde\alpha}\|_{2^{\sharp},\partial\Omega}^{2^{\sharp}}
	-\alpha_{j}\|u_{\tilde\alpha}\|_{2^{\sharp},\Omega}^{2^{\sharp}},
	\]
	with $a=\|u_{\tilde\alpha}\|^{2}$ and
	$c=\|u_{\tilde\alpha}\|_{2^{*},\Omega}^{2^{*}}$ both fixed and positive. Since $\alpha_{j} \to \tilde\alpha$,
	\[
	b_{j} \to b_{\infty}
	:= \|u_{\tilde\alpha}\|_{2^{\sharp},\partial\Omega}^{2^{\sharp}}
	- \tilde\alpha\,\|u_{\tilde\alpha}\|_{2^{\sharp},\Omega}^{2^{\sharp}},
	\]
	and the continuity of $t_j$ gives
	$t_{j} \to t_{\tilde\alpha}(u_{\tilde\alpha}) = 1$, the latter equality holds
	because $u_{\tilde\alpha} \in \mathcal{N}_{\tilde\alpha}$.
	Hence
	\[
	S_{\tilde\alpha}\le\lim_{j\to\infty}S_{\alpha_{j}}
	\le\lim_{j\to\infty}I_{\alpha_{j}}(t_{j}u_{\tilde\alpha})
	=I_{\tilde\alpha}(u_{\tilde\alpha})=S_{\tilde\alpha}.
	\]
	We conclude that $\underset{j\to\infty}{\lim} S_{\alpha_{j}}=S_{\tilde\alpha}$.
	
	Assume now that $\alpha_j\nearrow\tilde\alpha$. By the monotonicity of
	$S_\alpha$, the limit
	$L:=\displaystyle\lim_{j\to\infty}S_{\alpha_j}$ exists and satisfies $L\le S_{\tilde\alpha}$.
	We only need to show that $L=S_{\tilde\alpha}$.
	
If $L<A$, then $S_{\alpha_{j}}<A$ for all sufficiently large $j$. Lemma~\ref{lem:existence-below-threshold} therefore yields $u_{j}\in\mathcal N_{\alpha_{j}}$ with $I_{\alpha_{j}}(u_{j})=S_{\alpha_{j}}\to L$. As in the proof of Lemma~\ref{lem:existence-below-threshold}, $\{u_{j}\}$ is bounded in $H^{1}(\Omega)$,  passing to a subsequence, $u_{j}\rightharpoonup u_{*}$ weakly in $H^{1}(\Omega)$. By Lemma~\ref{lem:nehari-fibring}\,(i), for each $j$ there is a unique $s_{j}:=t_{\tilde\alpha}(u_{j})>0$ with $s_{j}u_{j}\in\mathcal{N}_{\tilde\alpha}$.
	Set
	\[
	a_{j}:=\|u_{j}\|^{2},\quad
	c_{j}:=\|u_{j}\|_{2^{*},\Omega}^{2^{*}},\quad
	b_{j}:=\|u_{j}\|_{2^{\sharp},\partial\Omega}^{2^{\sharp}}
	-\alpha_{j}\|u_{j}\|_{2^{\sharp},\Omega}^{2^{\sharp}},\quad
	\]
	and
	\[
	\tilde b_{j}:=\|u_{j}\|_{2^{\sharp},\partial\Omega}^{2^{\sharp}}
	-\tilde\alpha\|u_{j}\|_{2^{\sharp},\Omega}^{2^{\sharp}}.
	\]
	Applying the fibring formula~\eqref{eq:fibring-formula} to $u_j$ with
	$\alpha=\tilde\alpha$, we obtain
	\begin{equation}\label{eq:sj}
		s_{j}=\Bigg(\frac{-\tilde b_{j}+\sqrt{\tilde b_{j}^{2}+4a_{j}c_{j}}}{2c_{j}}\Bigg)^{(n-2)/2}.
	\end{equation}
	Applying the same formula at parameter $\alpha_{j}$ and using
	$u_{j}\in\mathcal{N}_{\alpha_{j}}$, we have
	\begin{equation}\label{eq:tj1}
		1=t_{\alpha_{j}}(u_{j})=\Bigg(\frac{-b_{j}+\sqrt{b_{j}^{2}+4a_{j}c_{j}}}{2c_{j}}\Bigg)^{(n-2)/2}.
	\end{equation}
	Moreover,
	\[
	|\tilde b_{j}-b_{j}|
	=|\tilde\alpha-\alpha_{j}|\,\|u_{j}\|_{2^{\sharp},\Omega}^{2^{\sharp}}
\to 0.
	\]
	Using \eqref{eq:sj} and \eqref{eq:tj1}, we conclude
	$s_{j}\to 1$.
	Then
	\[
	S_{\tilde\alpha}\le I_{\tilde\alpha}(s_{j}u_{j})
	=I_{\alpha_{j}}(u_{j})
	+\tfrac{\tilde\alpha-\alpha_{j}}{2^{\sharp}}\!\int_{\Omega}\!|u_{j}|^{2^{\sharp}}\d x+o(1)
	=S_{\alpha_{j}}+o(1)\to L,
	\]
	since $\|u_{j}\|_{2^{\sharp},\Omega}$ is bounded.  Combining this inequality with
	$L\leq S_{\tilde\alpha}$, we conclude that
$	L=S_{\tilde\alpha}.$
Hence,
	\[
	\lim_{j\to\infty}S_{\alpha_j}=S_{\tilde\alpha},
	\]
	which proves the left continuity of $S_\alpha$ at $\tilde\alpha$.
\end{proof}

\begin{Definition}\label{def:alpha0}
	Define
	\begin{equation}\label{eq:alpha0_def}
		\alpha_{0}:=\inf\{\alpha\ge 0\,:\,S_{\alpha}=A\},
	\end{equation}
	with the convention $\inf\emptyset=+\infty$.
\end{Definition}

\begin{Remark}\label{rmk:alpha0_positive}
	Since $S_{0}<A$ (\cite[Theorem~2.5]{DWW}), we have $\alpha_{0}>0$.
	By Lemma~\ref{lem:monotone_continuous}, $\alpha\mapsto S_{\alpha}$ is
	continuous, so $S_{\alpha_{0}}=A$ whenever $\alpha_{0}<+\infty$, and
	$S_{\alpha}<A$ for all $\alpha\in[0,\alpha_{0})$.
\end{Remark}

\begin{Corollary}\label{cor:strict_increase}
	The map $\alpha\mapsto S_{\alpha}$ is strictly increasing on
	$[0,\alpha_{0})$, and on $[0,\alpha_{0}]$ if $\alpha_{0}<+\infty$.
\end{Corollary}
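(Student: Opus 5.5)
The proof compares $I_{\alpha_1}$ and $I_{\alpha_2}$ along the minimizer of the smaller parameter. Fix $0\le\alpha_1<\alpha_2$, where $\alpha_2<\alpha_0$ in the first case and $\alpha_2\le\alpha_0$ when $\alpha_0<+\infty$. Then $\alpha_1<\alpha_0$, so Remark~\ref{rmk:alpha0_positive} gives $S_{\alpha_1}<A$. Lemma~\ref{lem:existence-below-threshold} then provides a minimizer $u_1\in\mathcal N_{\alpha_1}$ with $I_{\alpha_1}(u_1)=S_{\alpha_1}$ and $u_1\neq0$.

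By Lemma~\ref{lem:nehari-fibring}(i), set $t_2:=t_{\alpha_2}(u_1)>0$, so that $t_2u_1\in\mathcal N_{\alpha_2}$. The key strict inequality is
\[
S_{\alpha_2}\le I_{\alpha_2}(t_2u_1)=I_{\alpha_1}(t_2u_1)+\frac{\alpha_2-\alpha_1}{2^\sharp}\,t_2^{2^\sharp}\int_\Omega|u_1|^{2^\sharp}\,dx .
\]
Since $t\mapsto I_{\alpha_1}(tu_1)$ attains its global maximum at $t=1$ (because $u_1\in\mathcal N_{\alpha_1}$ and \eqref{eq:fibring-sup} holds), we have $I_{\alpha_1}(t_2u_1)\le I_{\alpha_1}(u_1)=S_{\alpha_1}$. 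Here the direction of the comparison matters: it would seem we need the reverse inequality $S_{\alpha_2}>S_{\alpha_1}$, while the display only bounds $S_{\alpha_2}$ from above. So the argument has to be run the other way around.

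For the correct direction we use a minimizer at $\alpha_2$. When $\alpha_2<\alpha_0$, we have $S_{\alpha_2}<A$, and Lemma~\ref{lem:existence-below-threshold} gives $u_2\in\mathcal N_{\alpha_2}$ attaining $S_{\alpha_2}$. Let $s:=t_{\alpha_1}(u_2)$. Then
\[
S_{\alpha_1}\le I_{\alpha_1}(su_2)=I_{\alpha_2}(su_2)-\frac{\alpha_2-\alpha_1}{2^\sharp}\,s^{2^\sharp}\|u_2\|_{2^\sharp,\Omega}^{2^\sharp}<I_{\alpha_2}(su_2)\le I_{\alpha_2}(u_2)=S_{\alpha_2}.
\]
The strict inequality uses $s>0$ and $u_2\not\equiv0$, so that $\|u_2\|_{2^\sharp,\Omega}>0$. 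The last inequality uses that $t=1$ maximizes $t\mapsto I_{\alpha_2}(tu_2)$. This proves strict monotonicity on $[0,\alpha_0)$.

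The endpoint $\alpha_2=\alpha_0<+\infty$ is the only delicate step, since no minimizer is available there. It is immediate from what is already known: Remark~\ref{rmk:alpha0_positive} gives $S_{\alpha_0}=A$, while $S_{\alpha_1}<A$ for every $\alpha_1<\alpha_0$. Hence $S_{\alpha_1}<S_{\alpha_0}$, and combined with the previous case this gives strict monotonicity on $[0,\alpha_0]$.
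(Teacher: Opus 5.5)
Your final argument is correct and essentially the paper's. You compare $I_{\alpha_1}$ and $I_{\alpha_2}$ along the fibre of a minimizer $u_2\in\mathcal N_{\alpha_2}$, which exists because $S_{\alpha_2}<A$. For the endpoint, your observation $S_{\alpha_1}<A=S_{\alpha_0}$ replaces the paper's chain $S_{\alpha_1}<S_{\alpha}\le S_{\alpha_0}$ with an intermediate $\alpha\in(\alpha_1,\alpha_0)$; both work equally well. The abandoned first attempt using the minimizer at $\alpha_1$ should simply be deleted from a written version.
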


\begin{proof}
Fix $0\leq\alpha_{1}<\alpha_{2}<\alpha_{0}$. By
Remark~\ref{rmk:alpha0_positive}, we have
$S_{\alpha_{2}}<A$. Hence, Lemma~\ref{lem:existence-below-threshold}
provides a minimizer
$u_{\alpha_{2}}\in\mathcal{N}_{\alpha_{2}}$. By Lemma~\ref{lem:nehari-fibring}(i), there exists a unique
$\tau:=t_{\alpha_{1}}(u_{\alpha_{2}})>0$ such that
$\tau u_{\alpha_{2}}\in\mathcal{N}_{\alpha_{1}}$. Since
$u_{\alpha_{2}}\in\mathcal{N}_{\alpha_{2}}$, we also have
$t_{\alpha_{2}}(u_{\alpha_{2}})=1$.
For every $t>0$, we have
\[
I_{\alpha_{1}}(tu_{\alpha_{2}})
=
I_{\alpha_{2}}(tu_{\alpha_{2}})
-\frac{\alpha_{2}-\alpha_{1}}{2^{\sharp}}\,t^{2^{\sharp}}\,
\|u_{\alpha_{2}}\|_{2^{\sharp},\Omega}^{2^{\sharp}}
<
I_{\alpha_{2}}(tu_{\alpha_{2}}),
\]
where the strict inequality follows from
$\alpha_{2}>\alpha_{1}$, $t>0$, and
$u_{\alpha_{2}}\not\equiv0$. Taking the supremum with respect to $t>0$ and using
\eqref{eq:fibring-sup} for both parameters $\alpha_{1}$ and
$\alpha_{2}$, we obtain
\[
S_{\alpha_{1}}
\leq I_{\alpha_{1}}(\tau u_{\alpha_{2}})
=\sup_{t>0}I_{\alpha_{1}}(tu_{\alpha_{2}})
<\sup_{t>0}I_{\alpha_{2}}(tu_{\alpha_{2}})
=I_{\alpha_{2}}(u_{\alpha_{2}})
=S_{\alpha_{2}}.
\]
	If $\alpha_{0}<+\infty$, the strict monotonicity of
	$S_\alpha$ and the continuity of $\alpha\mapsto S_\alpha$ yield
	\[
	S_{\alpha_{1}}<S_{\alpha}\leq S_{\alpha_{0}}
	\quad\text{for all }\alpha\in(\alpha_{1},\alpha_{0}),
	\]
	and therefore $S_{\alpha_{1}}<S_{\alpha_{0}}.$
\end{proof}

When $\alpha_0$ is finite, the following corollary holds.
\begin{Corollary}\label{cor:existence_nonexistence}
	Problem  \eqref{eq:main} admits a least energy solution $u_\alpha$ for all $\alpha\in(0, \alpha_0)$ and  admits no least-energy solution for all $\alpha\in(\alpha_0,+\infty)$ if $\alpha_{0} $ is finite.
\end{Corollary}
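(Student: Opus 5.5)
The plan is to treat the two ranges of $\alpha$ separately, combining the threshold characterization in Remark~\ref{rmk:alpha0_positive} with the existence criterion of Lemma~\ref{lem:existence-below-threshold} and the strict monotonicity argument of Corollary~\ref{cor:strict_increase}.

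\emph{Existence for $\alpha\in(0,\alpha_0)$.} By Remark~\ref{rmk:alpha0_positive}, continuity of $\alpha\mapsto S_\alpha$ (Lemma~\ref{lem:monotone_continuous}) together with the definition of $\alpha_0$ gives $S_\alpha<A$ for every $\alpha\in[0,\alpha_0)$. Indeed, $S_\alpha\le A$ always holds by Lemma~\ref{lem:S_le_A}, and equality before $\alpha_0$ would contradict the infimum in \eqref{eq:alpha0_def}. Lemma~\ref{lem:existence-below-threshold} then yields a minimizer $u_\alpha\in\mathcal N_\alpha$ with $I_\alpha(u_\alpha)=S_\alpha$. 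By Lemma~\ref{lem:nehari-fibring}(iii), $u_\alpha$ is a critical point of $I_\alpha$. To get positivity, I would replace $u_\alpha$ by $|u_\alpha|$. Since $I_\alpha$ and $J_\alpha$ depend only on $|u|$, the function $|u_\alpha|$ is again a minimizer, hence a nonnegative nontrivial weak solution. Regularity and the strong maximum principle (Hopf on the boundary, after writing the equation as $-\Delta u+c(x)u\ge0$ with $c=\lambda+\alpha u^{2^\sharp-2}\in L^{n/2}$) then give $u>0$ in $\overline\Omega$.

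\emph{Nonexistence for $\alpha>\alpha_0$.} Suppose $\alpha>\alpha_0$ and that $S_\alpha$ is attained by some $u\in\mathcal N_\alpha$. Since $\alpha_0<\infty$, Remark~\ref{rmk:alpha0_positive} gives $S_{\alpha_0}=A$. Monotonicity and Lemma~\ref{lem:S_le_A} then give $S_{\beta}=A$ for all $\beta\ge\alpha_0$. Now I repeat the strict-inequality argument of Corollary~\ref{cor:strict_increase} with $\alpha_1=\alpha_0$, $\alpha_2=\alpha$, using the minimizer $u$ at level $\alpha$. Because $u\not\equiv0$, for every $t>0$
\[
I_{\alpha_0}(tu)=I_\alpha(tu)-\tfrac{\alpha-\alpha_0}{2^\sharp}t^{2^\sharp}\|u\|_{2^\sharp,\Omega}^{2^\sharp}<I_\alpha(tu).
\]
Evaluating at $t=t_{\alpha_0}(u)$, where the sup in \eqref{eq:fibring-sup} is attained, gives
\[
S_{\alpha_0}\le \sup_{t>0}I_{\alpha_0}(tu)<\sup_{t>0} I_\alpha(tu)=I_\alpha(u)=S_\alpha=A.
\]
This contradicts $S_{\alpha_0}=A$, so no least energy solution exists for $\alpha>\alpha_0$.

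\emph{Main obstacle.} The only delicate point is making sure the strict inequality survives the supremum. This works because the supremum for $\alpha_0$ is attained at a finite $t$ by Lemma~\ref{lem:nehari-fibring}(i), so the pointwise strict inequality at that $t$ is enough. The positivity step is routine but does require the boundary Hopf lemma, since the boundary condition is nonlinear.
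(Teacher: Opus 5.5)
Your proposal is correct and follows the paper's proof essentially step for step. For existence, $S_\alpha<A$ on $[0,\alpha_0)$ combined with Lemma~\ref{lem:existence-below-threshold} gives a minimizer; for $\alpha>\alpha_0$, the strict comparison $S_{\alpha_0}\le\sup_{t>0}I_{\alpha_0}(tu)<\sup_{t>0}I_\alpha(tu)=A$ contradicts $S_{\alpha_0}=A$. Your extra step of passing to $|u_\alpha|$ and using regularity, the strong maximum principle and the Hopf lemma to get $u_\alpha>0$ is a worthwhile supplement that the paper leaves implicit. The paper needs it because its notion of least energy solution only requires $u\in\mathcal N_\alpha$ with $I_\alpha(u)=S_\alpha$.
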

\begin{proof}
	By Remark~\ref{rmk:alpha0_positive}, we have
	$S_{\alpha}<A$ for every $\alpha\in[0,\alpha_{0})$. Hence,
	Lemma~\ref{lem:existence-below-threshold} provides a least energy
	solution
$u_{\alpha}\in\mathcal{N}_{\alpha}.$
	
	Conversely, suppose that $\alpha>\alpha_{0}$ and, by contradiction,
	that there exists $u\in\mathcal{N}_{\alpha}$ such that
	$I_{\alpha}(u)=S_{\alpha}=A$. By
	Lemma~\ref{lem:nehari-fibring}(i), there exists a unique scalar
	$\tau:=t_{\alpha_{0}}(u)>0$ such that
	$\tau u\in\mathcal{N}_{\alpha_{0}}$.
	Using the characterization \eqref{eq:fibring-sup}, we obtain
	\[
	S_{\alpha_{0}}
	\leq I_{\alpha_{0}}(\tau u)
	=\sup_{t>0}I_{\alpha_{0}}(tu)
	<\sup_{t>0}I_{\alpha}(tu)
	=I_{\alpha}(u)
	=A.
	\]
	This contradicts the equality $S_{\alpha_{0}}=A$, and the proof is complete.
\end{proof}

	\section{Blow-up analysis}\label{sec:blowup}
	To prove that the threshold $\alpha_{0}$ is finite, we argue by
	contradiction. Assume that $\alpha_{0}=+\infty$. Let
	$\alpha_{k}\to+\infty$, and let
	$u_{k}\in\mathcal{N}_{\alpha_{k}}$ be the corresponding least energy
	solutions provided by Lemma~\ref{lem:existence-below-threshold}.
	We first analyze
	the blow-up behavior of $\{u_k\}$, showing that concentration occurs at
	boundary points and that $u_k$ admits a single boundary bubble profile.

Fix $P_{0}\in\partial\Omega$,
without loss of generality, assume $P_{0}=0$. There exist $R>0$
and a smooth function $g\colon B'(2R)\to\R$ with $g(0)=0$ and
$\nabla g(0)=0$ such that
\begin{equation}\label{eq:graph_setup}
	\begin{aligned}
		\Omega\cap B(R)
		&= \bigl\{(x',x_{n})\in B(R)\colon x_{n}>g(x')\bigr\},\\
		\partial\Omega\cap B(R)
		&= \bigl\{(x',x_{n})\in B(R)\colon x_{n}=g(x')\bigr\}.
	\end{aligned}
\end{equation}
	
		Define the map $\psi = (\psi_1, \dots, \psi_n): B(R) \to \mathbb{R}^n$ by
	\begin{equation}\label{eq:psi_def}
			\begin{aligned}
			\psi_j(x) &= x_j - \frac{(g(x') - x_n)}{\left(1 + |\nabla g(x')|^2\right)} \frac{\partial g}{\partial x_j}(x') \quad 1 \leq j \leq n-1, \\
			\psi_n(x) &= x_n - g(x').
		\end{aligned}
	\end{equation}
Then clearly the determinant of the Jacobian of $\psi$ at $0$ is one and hence we can choose $R_0<R$ and $\widetilde{B}\subset B(R)$ an open neighbourhood of zero such that  $\psi$ is a
$C^{\infty}$-diffeomorphism satisfying
\[
\psi \bigl(\Omega\cap\widetilde B\bigr)
= B(R_0)^+ := \{(y', y_n) \in B(R_0) : y_n > 0\},
\qquad
\psi\!\bigl(\partial\Omega\cap\widetilde B\bigr)
= B(R_0)\cap\partial\mathbb{R}^n_+.
\]
A direct computation shows that, for $u\in H^{1}(\Omega)$ and
$v(y):=u(\psi^{-1}(y))$,
\begin{equation}\label{equ3.3}
		(\Delta u)(\psi^{-1}(y)) = \sum_{j,k} a_{jk}(y) \frac{\partial^2 v}{\partial y_j \partial y_k} + \sum_j b_j(y) \frac{\partial v}{\partial y_j},
	\end{equation}
	\begin{equation}\label{equ3.4}
\frac{\partial u}{\partial \nu} (\psi^{-1}(y)) = -c(y) \frac{\partial v}{\partial y_n} \quad \text{on } y_n = 0,
\end{equation}
with $a_{jk}(y)=\delta_{jk}+O(|y|)$, $c(y)=1+O(|y|)$, and $b_{j}$
smooth and bounded.

\begin{Lemma}\label{lem3.1}
Suppose $S_{\alpha}<A$ for all $\alpha\ge0$, and let $\alpha_{k}\to+\infty$ as $k\rightarrow \infty$.  For each $k$, let $u_k\in H^1(\Omega)$ be a minimizer of $I_{\alpha_{k}}$, so that $I_{\alpha_{k}}(u_k)=S_{\alpha_{k}}$. Then
\begin{enumerate}
		\item[\textup{(i)}] $S_{\alpha_k}\to A$ as $k\to\infty$.
		\item[\textup{(ii)}] $u_k\rightharpoonup0$ weakly in $H^1(\Omega)$ and $M_k:=\max\limits_{\overline{\Omega}}u_k\to+\infty$ as $k\to\infty$.
		\item[\textup{(iii)}]  $\lim\limits_{k\to\infty}\alpha_k\|u_k\|_{2^\sharp, \Omega}^{2^\sharp}=0$.
\end{enumerate}
\end{Lemma}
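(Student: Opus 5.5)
The plan is to extract everything from the Nehari identity, the uniform lower bound $\|u\|\ge\delta$ on $\mathcal N_\alpha$ from the proof of Lemma~\ref{lem:existence-below-threshold}, and the local compactness Lemma~\ref{lem:local_compactness}. First I record uniform bounds. By \eqref{eq:Nehari_energy} and Lemma~\ref{lem:S_le_A},
\[
\tfrac{1}{2(n-1)}\|u_k\|^2\le S_{\alpha_k}\le A ,
\]
so $\{u_k\}$ is bounded in $H^1(\Omega)$. Hence $\|u_k\|_{2^*,\Omega}$ and $\|u_k\|_{2^\sharp,\partial\Omega}$ are bounded as well. The Nehari identity
\[
\|u_k\|^{2}+\alpha_k\|u_k\|_{2^{\sharp},\Omega}^{2^{\sharp}}
=\|u_k\|_{2^{*},\Omega}^{2^{*}}+\|u_k\|_{2^{\sharp},\partial\Omega}^{2^{\sharp}}
\]
then gives $\alpha_k\|u_k\|_{2^\sharp,\Omega}^{2^\sharp}\le C$, so $\|u_k\|_{2^\sharp,\Omega}\to0$. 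If $u_k\rightharpoonup u$ along a subsequence, the compact embedding into $L^{2^\sharp}(\Omega)$ forces $u=0$. Since this holds for every subsequence, $u_k\rightharpoonup0$, which is the first half of (ii). Also $\|u_k\|_{2,\Omega}\to0$, so $\|\nabla u_k\|_{2,\Omega}^2=\|u_k\|^2+o(1)\ge\delta^2/2$ for large $k$. In particular $u_k\not\to0$ strongly.

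For (i), I compare $I_{\alpha_k}$ with $\Psi_\Omega$. Pointwise in $t>0$,
\[
I_{\alpha_k}(tu_k)=\Psi_\Omega(tu_k)+\tfrac{\lambda}{2}t^2\|u_k\|_{2,\Omega}^2+\tfrac{\alpha_k}{2^\sharp}t^{2^\sharp}\|u_k\|_{2^\sharp,\Omega}^{2^\sharp}\ge\Psi_\Omega(tu_k).
\]
Hence $\sup_{t>0}\Psi_\Omega(tu_k)\le\sup_{t>0}I_{\alpha_k}(tu_k)=S_{\alpha_k}$, the last equality by \eqref{eq:fibring-sup}. Since $u_k\rightharpoonup0$ but $u_k\not\to0$, Lemma~\ref{lem:local_compactness} applied along any subsequence gives $\liminf_k S_{\alpha_k}\ge A$. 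Together with $S_{\alpha_k}\le A$ this proves (i).

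For (iii), let $t_k>0$ be the unique maximizer of $t\mapsto\Psi_\Omega(tu_k)$. It is characterized by
\[
\|\nabla u_k\|_{2,\Omega}^2=t_k^{2^*-2}\|u_k\|_{2^*,\Omega}^{2^*}+t_k^{2^\sharp-2}\|u_k\|_{2^\sharp,\partial\Omega}^{2^\sharp}.
\]
The left side is $\ge\delta^2/2$ and the two norms on the right are bounded above, so $t_k\ge c_0>0$ uniformly. Since $t=1$ maximizes $t\mapsto I_{\alpha_k}(tu_k)$,
\[
S_{\alpha_k}=I_{\alpha_k}(u_k)\ge I_{\alpha_k}(t_ku_k)\ge \sup_{t>0}\Psi_\Omega(tu_k)+\tfrac{\alpha_k}{2^\sharp}c_0^{2^\sharp}\|u_k\|_{2^\sharp,\Omega}^{2^\sharp}.
\]
By Lemma~\ref{lem:local_compactness} (again along subsequences) the supremum is $\ge A-o(1)$. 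Hence $\tfrac{\alpha_k}{2^\sharp}c_0^{2^\sharp}\|u_k\|_{2^\sharp,\Omega}^{2^\sharp}\le S_{\alpha_k}-A+o(1)\to0$ by (i), which is (iii).

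It remains to show $M_k\to\infty$. By elliptic regularity for the Neumann problem, $u_k$ is continuous on $\overline\Omega$ (one may take $u_k\ge0$, replacing it by $|u_k|$, which is also a minimizer). Suppose $M_k\le M$ along a subsequence. Since $u_k\to0$ in $L^2(\Omega)$ and, by compactness of the trace $H^1(\Omega)\hookrightarrow L^2(\partial\Omega)$, also in $L^2(\partial\Omega)$, we get
\[
\|u_k\|_{2^*,\Omega}^{2^*}\le M^{2^*-2}\|u_k\|_{2,\Omega}^2\to0,
\qquad
\|u_k\|_{2^\sharp,\partial\Omega}^{2^\sharp}\le M^{2^\sharp-2}\|u_k\|_{2,\partial\Omega}^2\to0 .
\]
The Nehari identity then yields $\|u_k\|^2\to0$, contradicting $\|u_k\|\ge\delta$.

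I expect the main obstacle to be the step in (iii) where $t_k$ must be bounded below uniformly in $k$. This needs the nondegeneracy $\|\nabla u_k\|_{2,\Omega}\ge c>0$ coming from $\delta$, together with the fact that $\alpha_k\|u_k\|^{2^\sharp}_{2^\sharp,\Omega}$ can only be controlled after evaluating at $t_k$ rather than at $t=1$. The $L^\infty$ regularity used for $M_k$ is standard.
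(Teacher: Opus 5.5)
Your proof is correct. For the weak convergence, (i) and (iii) you follow the paper's route:
\begin{itemize}
\item boundedness and $u_k\rightharpoonup0$ from the Nehari identity;
\item the comparison $\Psi_\Omega(tu_k)\le I_{\alpha_k}(tu_k)$ at the maximizer $t_k$ of the $\Psi_\Omega$-fiber;
\item Lemma~\ref{lem:local_compactness} for the lower bound $A$.
\end{itemize}
The small differences favour you. You make explicit the non-vanishing $\|u_k\|\ge\delta$, which the paper uses tacitly when invoking Lemma~\ref{lem:local_compactness}. For (iii) you only need $t_k\ge c_0>0$, rather than the paper's $t_k\le1$ and $t_k\to1$.

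The genuinely different step is $M_k\to+\infty$. The paper evaluates the equation at the maximum point to get $\lambda+\alpha_kM_k^{2^\sharp-2}\le M_k^{2^*-2}$. This gives $M_k\to\infty$ at once from $\alpha_k\to\infty$, and when valid it also bounds $\alpha_k\delta_k$, which is used at the start of Lemma~\ref{lem:concentration_locus}. However, it rests on $-\Delta u_k(P_k)\ge0$, which is automatic only at an interior maximum. At a boundary maximum, the condition $\partial_\nu u_k=u_k^{2^\sharp-1}>0$ says nothing about the sign of the second normal derivative. Lemma~\ref{lem:concentration_locus} later shows that $P_k\in\partial\Omega$ for large $k$, which is exactly the case the paper's inequality does not cover.

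Your contradiction argument works differently. A uniform $L^\infty$ bound, together with $u_k\to0$ in $L^2(\Omega)$ and $L^2(\partial\Omega)$, makes the right-hand side of the Nehari identity vanish, contradicting $\|u_k\|\ge\delta$. It is purely variational and indifferent to where the maximum lies. It is also the same mechanism the paper itself uses in the note after Lemma~\ref{lem:concentration_locus} and in Lemma~\ref{lem:existence_at_threshold}. What it does not provide is any quantitative relation between $M_k$ and $\alpha_k$.
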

	\begin{proof}
Since $u_k\in\mathcal{N}_{\alpha_{k}}$ satisfies $I_{\alpha_k}(u_k)=S_{\alpha_k}$, we have
		\begin{equation}\label{equ2.39}
			\|u_k\|_{2^*,\Omega}^{2^*}+\|u_k\|_{2^\sharp,\partial\Omega}^{2^\sharp}=\|u_k\|^2+\alpha_k\|u_k\|_{2^\sharp,\Omega}^{2^\sharp}.
		\end{equation}
	Hence,
	\begin{equation}\label{equa3.3}
		S_{\alpha_k}=I_{\alpha_k}(u_k)
		=\Bigl(\frac12-\frac1{2^\sharp}\Bigr)\|u_k\|^2
		+\Bigl(\frac1{2^\sharp}-\frac1{2^*}\Bigr)
		\|u_k\|_{2^*,\Omega}^{2^*}
		\ge
		\Bigl(\frac12-\frac1{2^\sharp}\Bigr)\|u_k\|^2.
	\end{equation}
	It follows that $\{u_k\}$ is bounded in $H^1(\Omega)$. From \eqref{equ2.39} and $\alpha_k\to+\infty$, we conclude that $u_k\rightharpoonup0$ in $H^1(\Omega)$ as $k\to\infty$.
		
	For each $u_k\in H^1(\Omega)$ there exists a unique $t_k>0$ such that $\sup\limits_{t > 0}\Psi_{\Omega}(tu_k)=\Psi_{\Omega}(t_ku_k)$, characterized by
	\begin{equation}\label{equa3.4}
		t_k^2\|\nabla u_k\|_{2,\Omega}^2=t_k^{2^*}\|u_k\|_{2^*,\Omega}^{2^*}+t_k^{2^\sharp}\|u_k\|_{2^\sharp,\partial\Omega}^{2^\sharp}.
	\end{equation}
	Since $2^\sharp-2=\frac{1}{2}(2^*-2)$, equation \eqref{equa3.4} is quadratic in $t_k^{2^\sharp-2}$,  yielding the explicit solution
	\begin{equation*}
		t_k^{2^\sharp-2}=\frac{-\|u_k\|_{2^\sharp,\partial\Omega}^{2^\sharp}+\sqrt{\|u_k\|_{2^\sharp,\partial\Omega}^{2\cdot2^\sharp}+4\|\nabla u_k\|_{2,\Omega}^2\|u_k\|_{2^*,\Omega}^{2^*}}}{2\|u_k\|_{2^*,\Omega}^{2^*}}.
	\end{equation*}
	It follows from \eqref{equ2.39} that $t_k\le 1$. Using \eqref{equa3.4} and the expression for $I_{\alpha_{k}}$, we have
	\begin{equation*}
		\Psi_{\Omega}(t_ku_k)=(\frac{1}{2}-\frac{1}{2^\sharp})t_k^2\|\nabla u_k\|_{2,\Omega}^2+(\frac{1}{2^\sharp}-\frac{1}{2^*})t_k^{2^*}\|u_k\|_{2^*,\Omega}^{2^*}\le I_{\alpha_k}(u_k).
	\end{equation*}
By Lemma \ref{lem:local_compactness},	since $u_k\rightharpoonup0$,  we conclude that
		\begin{equation}\label{equ2.41}
			A\ge\lim_{k \to \infty}S_{\alpha_k}=\lim_{k \to \infty}I_{\alpha_k}(u_k)\ge\liminf_{k \to \infty}\sup_{t > 0}\Psi_{\Omega}(tu_k)\ge A.
		\end{equation}
		Hence $S_{\alpha_{k}}\to A$, and as a consequence $t_k\to 1$.
		
 Evaluating equation \eqref{eq:main}, the value $M_k$ satisfies
		\begin{equation}\label{equ2.42}
			\lambda +\alpha_kM_k^{2^\sharp-2}\le M_k^{2^*-2}.
		\end{equation}
		Since $\alpha_k\to+\infty$ and $2^*>2^\sharp$, we must have $M_k\to+\infty$ as $k\to\infty$.
		
		Moreover, since $I_{\alpha_k}(u_k)\to A$, by \eqref{equ2.41},
		we have
		\begin{equation}\label{eq:alpha_decay}
			\lim_{k\to\infty}\alpha_k\int_\Omega |u_k|^{2^\sharp}\d x=0,
			\qquad
			\lim_{k\to\infty}\int_\Omega |u_k|^2\d x=0.
		\end{equation}
	\end{proof}
	
	\begin{Lemma}\label{lem:concentration_locus}
	Let $\alpha_k \to \alpha_0 \in (0,+\infty]$  as $k\rightarrow \infty$  , and let $u_k \in H^1(\Omega)$ be a
	minimizer for $I_{\alpha_k}$ with $S_{\alpha_k} < A$ and
	$S_{\alpha_k} \to A$  as $k\rightarrow \infty$  .  When $\alpha_0 < +\infty$, suppose in addition that
	$u_k \rightharpoonup 0$ in $H^1(\Omega)$.
	Let $P_k \in \overline{\Omega}$ satisfy $u_k(P_k) = \max\limits_{\overline{\Omega}} u_k =: M_k$,
	and define
	\begin{equation}\label{eq:delta-def-new}
		\delta_k := \kappa_n^{2/(n-2)}\,M_k^{-2/(n-2)},
		\qquad
		\kappa_n := V_1(0) = C_n\!\left(\tfrac{n-2}{2(n-1)}\right)^{(n-2)/2}.
	\end{equation}
	Then we have
  \begin{equation}\label{eq:alpha_delta}
  	\displaystyle\lim_{k\to\infty}\alpha_k\delta_k = 0,
  \end{equation}
  \begin{equation*}
  	\displaystyle\lim_{k\to\infty}\frac{\mathrm{dist}(P_k,\partial\Omega)}{\delta_k}=0,
  \end{equation*}
  and $P_k \in \partial\Omega$,  for all sufficiently large $k$.
\end{Lemma}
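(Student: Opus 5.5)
The plan is a standard blow-up rescaling around $P_k$, with the critical absorption term shown to disappear in the limit and the limiting profile pinned down by Li--Zhu.

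\textbf{Step 1: rescaling.} Set $M_k\to+\infty$ (Lemma~\ref{lem3.1}(ii) when $\alpha_0=+\infty$; when $\alpha_0<\infty$ it follows from $u_k\rightharpoonup0$, $S_{\alpha_k}\to A$ and Lemma~\ref{lem:local_compactness}, since bounded $L^\infty$ norms would give strong convergence). Write $d_k:=\mathrm{dist}(P_k,\partial\Omega)$ and
\[
w_k(y):=M_k^{-1}u_k(P_k+M_k^{-2/(n-2)}y),\qquad \mu_k:=M_k^{-2/(n-2)}.
\]
Then $0<w_k\le 1$, $w_k(0)=1$, and
\[
-\Delta w_k+\lambda\mu_k^2 w_k=w_k^{2^*-1}-\alpha_k\mu_k\,w_k^{2^\sharp-1}
\]
in the rescaled domain $\Omega_k$, with $\partial_\nu w_k=w_k^{2^\sharp-1}$ on $\partial\Omega_k$. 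The scale factor is exact because $(2^*-2^\sharp)\cdot\frac{n-2}{2}\cdot\frac{2}{n-2}=1$, so the absorption coefficient is $\alpha_k\mu_k\asymp\alpha_k\delta_k$.

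\textbf{Step 2: $\alpha_k\delta_k\to0$.} I expect this to be the main obstacle. The pointwise bound \eqref{equ2.42} only gives $\alpha_k\mu_k\le1$, which is not enough, so I argue by contradiction. Suppose $\alpha_k\mu_k\to\beta\in(0,1]$ along a subsequence. The coefficients of the equation are bounded and $w_k\le1$. Elliptic $L^p$ and Schauder estimates, up to the boundary (flattened via $\psi$ using \eqref{equ3.3}--\eqref{equ3.4}), then give $C^{1,\gamma}_{loc}$ convergence of $w_k$ to some $w\not\equiv0$ with $w(0)=1$. I then need a uniform positive lower bound for $w$ near the origin:
\begin{itemize}
\item If $d_k/\mu_k\to\infty$, then $w$ is a positive bounded solution of $-\Delta w=w^{2^*-1}-\beta w^{2^\sharp-1}$ on $\mathbb{R}^n$. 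Alternatively, one simply uses continuity near $0$ together with Harnack on $B_1$.
\item Otherwise $w$ solves the corresponding problem on a half-space.
\end{itemize}
In either case $w\ge c>0$ on $B_1$ (or on $B_1\cap\{y_n>-t\}$). Scaling back,
\[
\alpha_k\!\int_\Omega u_k^{2^\sharp}\,dx=\alpha_k\mu_k\!\int_{\Omega_k}w_k^{2^\sharp}\,dy\ge c\,\beta>0.
\]
This contradicts Lemma~\ref{lem3.1}(iii), namely \eqref{eq:alpha_decay}; the same identity $\alpha_k\!\int u_k^{2^\sharp}\to0$ holds when $\alpha_0<\infty$, because $u_k\rightharpoonup0$ forces $\int u_k^{2^\sharp}\to0$. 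Hence $\alpha_k\mu_k\to0$, which is \eqref{eq:alpha_delta}.

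\textbf{Step 3: identifying the limit and locating $P_k$.} With the absorption gone, the limit $w$ solves the absorption-free problem. First rule out $d_k/\mu_k\to\infty$. In that case $w$ is a positive entire solution of $-\Delta w=w^{2^*-1}$ with finite energy (Fatou and the energy bound), so it is an Aubin--Talenti bubble with energy $\frac1n S^{n/2}$. This is strictly larger than $A$, since half-space bubbles have less energy than $\frac1nS^{n/2}$ by \cite{DWW}. That contradicts $I_{\alpha_k}(u_k)\to A$, after using lower semicontinuity of the energy restricted to $B_R$ together with \eqref{equa3.3}.

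So $d_k/\mu_k\to t_0<\infty$, and $w$ solves \eqref{eq:half_space_critical} on $\{y_n>-t_0\}$ (suitably rotated), with its maximum value $1$ at $0$. By Li--Zhu, $w$ is a translate and dilate of $V_1$. A translate and dilate of $V_1$ attains its maximum only on the boundary, at the foot point, because $V_1$ is decreasing in $y_n$ on $y_n>0$ (as $x_n^0>0$). The maximum at the interior point $0$ therefore forces $t_0=0$.

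Moreover $\max w=1$, and $V_1$ scaled as $\delta^{-(n-2)/2}V_1(\cdot/\delta)$ has maximum $\kappa_n\delta^{-(n-2)/2}$. This identifies the dilation parameter, so the natural scale is $\delta_k$ from \eqref{eq:delta-def-new}, and $d_k/\delta_k\to0$.

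\textbf{Step 4: $P_k\in\partial\Omega$ for large $k$.} Suppose $P_k$ were interior. Then $\nabla u_k(P_k)=0$, so $\nabla w_k(0)=0$. The $C^1$ convergence up to the boundary gives $\nabla w(0)=0$. But at the boundary point $0$ the limit satisfies
\[
-\partial_{y_n}w(0)=w(0)^{2^\sharp-1}=1\neq0,
\]
a contradiction.
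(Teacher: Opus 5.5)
Your proposal follows the paper's proof essentially step for step: blow-up at the $L^\infty$ scale (your $w_k$ is the paper's $v_k$ up to constant factors), $\alpha_k\delta_k\to0$ by playing a uniform lower bound on $\int_{\Omega_k\cap B_R}v_k^{2^\sharp}$ against $\alpha_k\int_\Omega u_k^{2^\sharp}\to0$, exclusion of interior blow-up because $\frac1nS^{n/2}>A$, Li--Zhu plus monotonicity in $x_n$ to force $L=0$, and the Neumann condition to contradict $\nabla u_k(P_k)=0$. The one soft spot, which you inherit from the paper, is the a priori bound $\alpha_k\mu_k\le1$ from \eqref{equ2.42}: it relies on $-\Delta u_k(P_k)\ge0$, which holds only at an interior maximum (at a boundary maximum with $\partial_\nu u_k>0$ the normal second derivative is unconstrained, and the lemma itself puts $P_k$ on $\partial\Omega$), so the case $\alpha_k\mu_k\to\infty$ must be excluded separately, e.g.\ by a mean-value estimate on small half-balls for the subharmonic function $w_k+|y|^2/(2n)$ (using $-\Delta w_k\le1$ and $\partial_\nu w_k\le1$), which bounds $\int_{\Omega_k\cap B_r}w_k^{2^\sharp}$ from below with no control on the absorption coefficient.
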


\noindent Note: When $\alpha_{0}=+\infty$, Lemma \ref{lem3.1} guarantees that $S_{\alpha_k}\to A$, $u_k\rightharpoonup0$ and $M_k\to+\infty$ both hold automatically. When $\alpha_{0}<+\infty$, the assumption $u_k\rightharpoonup0$ forces $M_k\to+\infty$.  Indeed, if $M_k$ were bounded, by the interpolation inequality,
$
\|u_k\|_{L^{2^*}(\Omega)}
\le M_k^{\,1-\theta}\|u_k\|_{L^{s}(\Omega)}^{\theta}
$
for some $s<2^*$ and $\theta\in(0,1)$, we would have
$\|u_k\|_{2^*,\Omega}\to0$. Since
$\|u_k\|_{2^\sharp,\partial\Omega}\to0$, it follows that
$S_{\alpha_k}=I_{\alpha_k}(u_k)\to0$, which contradicts
$S_{\alpha_k}\to A>0$.

\begin{proof}
	Let $P_k \in \overline{\Omega}$ satisfy $M_k  = u_k(P_k)$ and define
\( v_k(x) := \delta_k^{\frac{n-2}{2}} u_k(\delta_k x + P_k) \) for \( x \in \Omega_k := \frac{\Omega - P_k}{\delta_k} \). Then $v_k$ satisfies
\begin{equation}\label{equ3.20}
	\begin{cases}
		-\Delta v_k + \lambda\delta_k^2 v_k + \alpha_k \delta_k v_k^{2^\#-1} = v_k^{2^*-1} & \text{in } \Omega_k, \\
		0 < v_k \leq v_k(0) = \kappa_n & \text{in } \Omega_k, \\
		\frac{\partial v_k}{\partial \nu} = v_k^{2^\sharp-1} & \text{on } \partial\Omega_k.
	\end{cases}
\end{equation}
Rewriting \eqref{equ2.42} in terms of $\delta_k$, we obtain
\begin{equation}\label{eq:bdd}
	\lambda\delta_k^2+\alpha_k\kappa_n^{2/(n-2)}\delta_k
	\le\kappa_n^{4/(n-2)}.
\end{equation}
Hence, the sequences $\{\alpha_k\delta_k\}$ and
$\{\lambda\delta_k^2\}$ are bounded. Up to a subsequence,  we may assume that
\[
P_k\to P_0,\qquad
\alpha_k\delta_k\to a,\qquad
\Omega_k:=\frac{\Omega-\{P_k\}}{\delta_k}\to\Omega_\infty.
\]
Since $\lambda\delta_k^2\to0$, applying the elliptic estimates
of~\cite{ADN} to~\eqref{equ3.20}, we obtain
\[
v_k\to v
\quad\text{in }C_{\mathrm{loc}}^2(\Omega_\infty),
\]
where $v\in D^{1,2}(\Omega_\infty)$ satisfies
\[
\begin{cases}
	-\Delta v+a v^{2^\sharp-1}=v^{2^*-1}
	&\text{in }\Omega_\infty,\\
	0<v\le v(0)=\kappa_n
	&\text{in }\Omega_\infty,\\
	\dfrac{\partial v}{\partial\nu}=v^{2^\sharp-1}
	&\text{on }\partial\Omega_\infty.
\end{cases}
\]
 By weak lower semicontinuity of the norm, $v\in L^{2^*}(\Omega_\infty)$ and $\nabla v\in L^2(\Omega_\infty)$.
		Since $v(0)=\kappa_{n}>0$ and $v$ is continuous and positive, we may fix $R>0$ such that
	\[
	\int_{\Omega_\infty\cap B_R} |v|^{2^\sharp}\d x \ge c_R>0.
	\]
	Since  $v_k\to v$ in $L^{2^\sharp}(\Omega_\infty\cap B_R)$ and
	$v\not\equiv0$ in $\Omega_\infty\cap B_R$, we have
		\[ 
	\int_{\Omega_k\cap B_R} |v_k|^{2^\sharp}\d x \ge \frac{c_R}{2}>0
	\]
	 for all sufficiently large $k$.
	 By Lemma~\ref{lem3.1}\textup{(iii)}, we have
	 \[
	 \alpha_k\int_{\Omega}|u_k|^{2^\sharp}\,\mathrm{d}x\to0\quad \text{as } k\to\infty,
	 \]
	 and the rescaling identity gives
	 \[
	 \alpha_k\delta_k
	 \int_{\Omega_k}|v_k|^{2^\sharp}\,\mathrm{d}x
	 =
	 \alpha_k\int_{\Omega}|u_k|^{2^\sharp}\,\mathrm{d}x .
	 \]
	 Moreover, since $ \int_{\Omega_k}|v_k|^{2^\sharp}\,\mathrm{d}x
	 \ge \frac{c_R}{2}$
	 for all sufficiently large $k$, we obtain
	 \begin{equation}\label{equ2.47}
	 0\le\alpha_k\delta_k\le \frac{2}{c_R}\alpha_k\delta_k\,\int_{\Omega_k} |v_k|^{2^\sharp}\d x
	 =\frac{2}{c_R}\alpha_k\int_{\Omega} |u_k|^{2^\sharp}\d x \to 0\quad \text{as } k\to\infty.
	 \end{equation}
	 Hence,
	 \[
	 \alpha_k\delta_k\to0\quad \text{as } k\to\infty.
	 \]
	 Together with $\lambda\delta_k^2\to0$, we deduce that the limit
	 $v$ satisfies
	 \begin{equation}\label{eq:limit_critical}
	 	\begin{cases}
	 		-\Delta v=v^{2^*-1}
	 		&\text{in }\Omega_\infty,\\[2pt]
	 		-\partial_\nu v=v^{2^\sharp-1}
	 		&\text{on }\partial\Omega_\infty,\\[2pt]
	 		0<v\le\kappa_n,\qquad v(0)=\kappa_n.
	 	\end{cases}
	 \end{equation}
	 
	 Set
	 \[
	 L:=\lim_{k\to\infty}
	 \frac{\operatorname{dist}(P_k,\partial\Omega)}{\delta_k}.
	 \]
	Arguing by contradiction, suppose that $L=+\infty$.
	Then $\Omega_\infty=\mathbb{R}^n$, and the limiting function $v$
	satisfies 	$-\Delta v=v^{2^*-1}$ in $\mathbb{R}^n$.
	 By the classification theorem of Caffarelli--Gidas--Spruck~\cite{CGS},
	 $v$ is an Aubin--Talenti bubble with Sobolev energy $\frac{1}{n}S^{\frac{n}{2}}$, 
	 where $S$ denotes the sharp Sobolev constant in $\mathbb{R}^n$.
	 By the weak lower semicontinuity, we deduce that
	 \[
	 A=\lim_{k\to\infty}I_{\alpha_k}(u_k)
	 \ge
	 \frac1nS^{\frac n2}.
	 \]
	 On the other hand, let $U_\varepsilon$ denote the Aubin--Talenti bubble
	 centred at the origin. Since $U_\varepsilon$ is symmetric with respect to
	 the hyperplane $\{x_n=0\}$,	we have
	 \[
	 \int_{\mathbb{R}^n_+}|\nabla U_\varepsilon|^2\,\mathrm{d}x
	 =
	 \int_{\mathbb{R}^n_+}|U_\varepsilon|^{2^*}\,\mathrm{d}x
	 =
	 \frac12S^{\frac n2}.
	 \]
	 A direct computation yields
	 \[
	 \sup_{t>0}\left(
	 \frac{t^2}{2}
	 \|\nabla U_\varepsilon\|_{L^2(\mathbb R^n_+)}^2
	 -\frac{t^{2^*}}{2^*}
	 \|U_\varepsilon\|_{L^{2^*}(\mathbb R^n_+)}^{2^*}
	 \right)
	 =
	 \frac{S^{\frac n2}}{2n}.
	 \]
	Using $U_\varepsilon$ as a test function in the variational characterization
	of $A$, we obtain
	\[
	A\le
	\sup_{t>0}\Psi_{\mathbb R^n_+}(tU_\varepsilon).
	\]
	Since $U_\varepsilon|_{\partial\mathbb R^n_+}\not\equiv0$, the boundary term is
	strictly negative for every $t>0$. Therefore,
	\[
	A<
	\frac{S^{\frac n2}}{2n}
	<
	\frac{S^{\frac n2}}{n},
	\]
	which is impossible. Hence $L<+\infty$, and consequently
	$P_k\to P_0\in\partial\Omega$.
	
	Without loss of generality assume
 \( P_0 = 0 \). Applying the flattening diffeomorphism \( \psi \) defined in \eqref{eq:psi_def}. Let  \( q_k:=\psi(P_k) \in B(R_0)^+\) and define  	 \[
 \tilde{v}_k(x)
 := \delta_k^{\frac{n-2}{2}}\,v_k(\delta_k x + q_k),
 \qquad
 x \in B_k := \frac{B(R_0)^+ - q_k}{\delta_k}.
 \]
   Setting $q_{nk}=(\psi(P_k))_n$, the assumption $L<+\infty$ gives $q_{nk}/\delta_k\to L$. The function $\tilde{v}_k$ satisfies
	\begin{equation*}
		\begin{cases}
			-\sum_{ij} \left(\delta_{ij} + O(|\delta_k x + q_k|)\right) \frac{\partial^2 \tilde{v}_k}{\partial x_i \partial x_j} +O (\delta_k) \nabla \tilde{v}_k+ \lambda\delta_k^2 \tilde{v}_k+\alpha_k\delta_k \tilde{v}_k^{2^\sharp-1}=\tilde{v}_k^{2^*-1} \quad \text{in } B_k,\\
			-\left(1+O(|\delta_k x + q_k|)\right)\frac{\partial \tilde{v}_k}{\partial x_n} = \tilde{v}_k^{2^\sharp-1} \quad \text{on } B_k \cap \left\{x_n = -\frac{q_{nk}}{\delta_k}\right\},
		\end{cases}
	\end{equation*}
	with $\tilde{v}_k(0) = \kappa_n$, $0 \leq \tilde{v}_k \leq \kappa_n.$ By the elliptic regularity estimates, we obtain
	\[
	\tilde v_k\to\omega_1
	\quad\text{in }C^2_{\mathrm{loc}}
	(\overline{B_\infty}),
	\]
	where
	\[
	B_\infty:=\{x\in\mathbb R^n:x_n>-L\},
	\]
	and the limit function
	$\omega_1\in H^1(B_\infty)$ satisfies
	\begin{equation}\label{equa3.20}
		\begin{cases}
			-\Delta\omega_1=\omega_1^{2^*-1}
			&\text{in }B_\infty,\\[2pt]
			-\partial_{x_n}\omega_1=\omega_1^{2^\sharp-1}
			&\text{on }\partial B_\infty,\\[2pt]
			0<\omega_1\le\kappa_n
			&\text{in }B_\infty,
		\end{cases}
	\end{equation}
	with $\omega_1(0)=\kappa_n$.  Applying the classification theorem of Li--Zhu~\cite{LZ} to the
	half-space $B_\infty=\{x_n>-L\}$, we have
	\[
	\omega_1(x)
	=C_n
	\left(
	\frac{1}
	{1+|x'|^2+|x_n+L+x_n^0|^2}
	\right)^{\frac{n-2}{2}},
	\]
	where $x_n^0=\sqrt{\frac{n}{n-2}}>0.$ By direct computation, we deduce that
	\begin{equation}\label{eq:monotonicity}
		\partial_{x_n}\omega_1
		=
		-\frac{(n-2)C_n(x_n+L+x_n^0)}
		{\left(1+|x'|^2+|x_n+L+x_n^0|^2\right)^{n/2}}
		<0,
		\quad
		\forall x\in\overline{B_\infty}.
	\end{equation}
	So $\omega_1$ attains its maximum on $\partial B_\infty=\{x_n=-L\}$. Since $\omega_1(0)=\kappa_n=V_1(0)$ is the maximum, we must have $L=0$. Therefore $\omega_1=V_1$ and $P_k\to P_0\in\partial\Omega$ with $\frac{\mathrm{dist}(P_k,\partial\Omega)}{\delta_k}\to0.$
	
	It remains to prove that $P_k\in\partial\Omega$ for all sufficiently
	large $k$. Assume by contradiction that there exists a subsequence such that $P_k\in\Omega$ for all $k$.  Since $P_k\in\Omega$ is an interior maximum point of $u_k$, we have
	\[
	\nabla u_k(P_k)=0.
	\]
	By the definition of the rescaling, this implies
	\[
	\nabla\widetilde v_k(0)=0.
	\]
	Passing to the limit by the $C^2_{\mathrm{loc}}$-convergence
	$\widetilde v_k\to\omega_1$, we obtain
	\[
	\nabla\omega_1(0)=0.
	\]
	Since $L=0$, the point $0$ lies on
	$\partial B_\infty=\{x_n=0\}$. The boundary condition in
	\eqref{equa3.20} then yields
	\[
	-\partial_{x_n}\omega_1(0)
	=\omega_1^{2^\sharp-1}(0)
	=\kappa_n^{2^\sharp-1},
	\]
	and hence
	\[
	\partial_{x_n}\omega_1(0)
	=-\kappa_n^{2^\sharp-1}\neq0,
	\]
	which contradicts $\nabla\omega_1(0)=0$.
	Therefore, $P_k\in\partial\Omega$ for all sufficiently large $k$.
\end{proof}
	\begin{Lemma}\label{lem:bubble_profile}
	Under the hypotheses  of Lemma~\ref{lem:concentration_locus},  we have
	\[
	\lim_{k\to\infty}\|\nabla(u_k - V_{\delta_k,P_k})\|_{2,\Omega} = 0.
	\]
\end{Lemma}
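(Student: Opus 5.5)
The plan is to set $w_k:=u_k-V_{\delta_k,P_k}$ and show that $\|\nabla w_k\|_{2,\Omega}\to0$. The argument combines three ingredients: local convergence of the rescaled profile from Lemma~\ref{lem:concentration_locus}, an energy count, and Brezis--Lieb splitting. By Lemma~\ref{lem:concentration_locus}, $P_k\in\partial\Omega$ for $k$ large, $\alpha_k\delta_k\to0$, and the rescaled functions $\tilde v_k$ (in flattened coordinates) converge to $V_1$ in $C^2_{\mathrm{loc}}(\overline{\mathbb R^n_+})$. Since $\psi$ has Jacobian $I+O(|y|)$ and the frame at $P_k$ agrees with the flattened frame to first order, the pull-back of $V_{\delta_k,P_k}$ under $x\mapsto P_k+\delta_k x$ (after flattening) also converges to $V_1$ locally. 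Consequently, for every fixed $R$,
\[
\int_{\Omega\cap B(P_k,R\delta_k)}|\nabla w_k|^2\,dx\to0 .
\]
It therefore suffices to prove the tail estimate
\[
\lim_{R\to\infty}\limsup_{k\to\infty}\int_{\Omega\setminus B(P_k,R\delta_k)}|\nabla u_k|^2\,dx=0,
\]
together with the corresponding statement for $V_{\delta_k,P_k}$. The latter is immediate from scaling, because $\int_{\mathbb R^n_+\setminus B_R}|\nabla V_1|^2\to0$ and curvature corrections are $O(\delta_k)$.

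For the tail of $u_k$ I will use energy counting. By Lemma~\ref{lem3.1} (or the hypotheses of Lemma~\ref{lem:concentration_locus}), $S_{\alpha_k}=I_{\alpha_k}(u_k)\to A$, while $\alpha_k\|u_k\|^{2^\sharp}_{2^\sharp,\Omega}\to0$ and $\|u_k\|_{2,\Omega}\to0$. Combining the Nehari identity with \eqref{equa3.3} gives
\[
\tfrac1{2(n-1)}\|\nabla u_k\|_{2,\Omega}^2+\tfrac{n-2}{2n(n-1)}\|u_k\|_{2^*,\Omega}^{2^*}=A+o(1).
\]
On the other hand, Lemma~\ref{lem2.1} and the fact that $V_1$ solves \eqref{eq:half_space_critical} give the analogous identity for $V_1$ with equality $=A$. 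By weak lower semicontinuity applied to the local $C^2$ convergence on $B_R$, and then letting $R\to\infty$,
\[
\liminf_k\|\nabla u_k\|^2_{2,\Omega}\ge\|\nabla V_1\|^2_{2,\mathbb R^n_+},\qquad
\liminf_k\|u_k\|^{2^*}_{2^*,\Omega}\ge\|V_1\|^{2^*}_{2^*,\mathbb R^n_+}.
\]
Since both coefficients above are positive and the sums coincide in the limit, both inequalities must be equalities. Hence $\|\nabla u_k\|_2^2\to\|\nabla V_1\|_2^2$, and likewise for the $L^{2^*}$ norm. Combined with the local convergence on $B(P_k,R\delta_k)$, this forces the gradient tail mass outside $B(P_k,R\delta_k)$ to be $o_R(1)$, which is exactly the tail estimate needed.

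Finally, I put the pieces together. I write $\|\nabla w_k\|^2$ as the sum of its integrals over $B(P_k,R\delta_k)\cap\Omega$ and over the complement. The first integral tends to $0$ for each fixed $R$. The second is at most $2\int_{\text{tail}}|\nabla u_k|^2+2\int_{\text{tail}}|\nabla V_{\delta_k,P_k}|^2$, and both of these tend to $0$ as $R\to\infty$. Letting $k\to\infty$ and then $R\to\infty$ yields the lemma.

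The main obstacle will be the local comparison step: verifying rigorously that the rescaled $V_{\delta_k,P_k}$, which is defined via the tangent half-space $\Lambda(P_k)$ rather than via $\psi$, converges to the same $V_1$ in $H^1(B_R^+)$ with the correct normalization $\delta_k$. I would handle this by noting that $T_{P_k}^{-1}\circ\psi^{-1}(q_k+\delta_k y)=\delta_k y+O(\delta_k^2|y|^2)$. On $B_R$ this gives $C^1$-closeness of the two rescaled functions, with error $O(R^2\delta_k)$. A secondary point is that $\Omega_k$ and $\Lambda(P_k)/\delta_k$ differ near the boundary by a region of width $O(R^2\delta_k)$, whose contribution to the gradient integral vanishes by the uniform bound $\|\nabla V_1\|_\infty<\infty$.
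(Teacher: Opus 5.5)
Your argument is correct, but it takes a different route from the paper's proof.

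\textbf{The paper's route.} It rescales $u_k$ directly in the $T_{P_k}$ frame and identifies the $C^2_{\rm loc}$ limit $\hat v=V_1$ via Li--Zhu. It then works with the remainder $\hat w_k=\hat v_k-V_1$. Brezis--Lieb splitting of the rescaled Nehari identity and of $\Psi_{\hat\Omega_k}(\hat v_k)\to A$ gives an asymptotic Nehari identity for $\hat w_k$ and $\Psi_{\hat\Omega_k}(\hat w_k)\to0$. Positivity of $\frac1n\int_{\hat\Omega_k}|\hat w_k|^{2^*}+\frac1{2(n-1)}\int_{\partial\hat\Omega_k}|\hat w_k|^{2^\sharp}$ then forces all terms, including the gradient, to vanish.

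\textbf{Your route.} You never split off a remainder. You use the Nehari-reduced energy of $u_k$ itself, $I_{\alpha_k}(u_k)=\frac1{2(n-1)}\|u_k\|^2+\frac{n-2}{2n(n-1)}\|u_k\|_{2^*,\Omega}^{2^*}$. Here the boundary term and the absorption term both drop out because they carry the same exponent $2^\sharp$, so in fact you only need $\|u_k\|_{2,\Omega}\to0$, not $\alpha_k\|u_k\|_{2^\sharp,\Omega}^{2^\sharp}\to0$. You combine this with the local lower bounds from the blow-up limit. The positive-coefficient squeeze then gives convergence of $\|\nabla u_k\|_2^2$ and $\|u_k\|_{2^*}^{2^*}$ to the values for $V_1$. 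Hence no gradient mass escapes to $\Omega\setminus B(P_k,R\delta_k)$. Local $C^1$ convergence plus the tail bounds finish the proof. For $V_{\delta_k,P_k}$ it is enough to use $\int_{\mathbb R^n\setminus B_R}|\nabla V_1|^2\to0$, since $V_1$ is an Aubin--Talenti bubble on all of $\mathbb R^n$.

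\textbf{Comparison.} Your approach avoids Brezis--Lieb on the moving domains $\hat\Omega_k$ and boundaries $\partial\hat\Omega_k$, and needs no trace convergence. The paper handles that step loosely, treating $\hat w_k$ as an element of $D^{1,2}(\mathbb R^n_+)$. The price you pay is an explicit chart comparison, which the paper dismisses as a ``standard flattening argument.''

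\textbf{One small correction in that comparison.} The map $\psi$ is centred at $P_0$, not at $P_k$. So the transition map is
\[
T_{P_k}^{-1}\circ\psi^{-1}(q_k+\delta_k y)=\delta_k M_k y+O(\delta_k^2|y|^2),\qquad M_k=I+O(|P_k-P_0|)\to I,
\]
rather than exactly $\delta_k y+O(\delta_k^2|y|^2)$. Alternatively, you may re-centre the flattening at each $P_k$, which is legitimate uniformly in $k$ by compactness of $\partial\Omega$. Either way, on $B_R$ you still get $C^1$-closeness of order $o(1)$, and that is all your local step needs.
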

\begin{proof}
	Since $P_k\in\partial\Omega$ for all sufficiently large $k$, the outward
	unit normal $\nu(P_k)$ is well defined, and the rigid motion
	$T_{P_k}$ introduced in~\eqref{eq:Txi_def} is available.
	Fix an orthonormal tangential frame
	$\{e_1(P_k),\dots,e_{n-1}(P_k)\}$ at $P_k$.
	By Lemma~\ref{lem:frame_indep}, the function
	$V_{\delta_k,P_k}$ is independent of this choice. Define
	\begin{equation}
		\hat v_k(y):=\delta_k^{(n-2)/2}u_k\!\bigl(T_{P_k}(\delta_k y)\bigr),
		\qquad y\in\hat\Omega_k:=\frac{T_{P_k}^{-1}(\Omega)}{\delta_k}.
		\label{eq:hatv-def}
	\end{equation}
	Because $T_{P_k}$ is an isometry, a change of variables gives
	\begin{equation}\label{eq:isom-grad-new}
		\|\nabla\hat{v}_k\|_{2,\hat\Omega_k}^2 = \|\nabla u_k\|_{2,\Omega}^2.
	\end{equation}
	From the definition of $V_{\delta_k,P_k}$ in \eqref{equa2.9} and the same
	change of variables, we have
	\begin{equation}\label{eq:isom-bubble-new}
		\delta_k^{(n-2)/2}\,V_{\delta_k,P_k}\!\left(T_{P_k}(\delta_k y)\right) = V_1(y).
	\end{equation}
	Combining \eqref{eq:isom-grad-new}--\eqref{eq:isom-bubble-new} yields
	\begin{equation}\label{eq:red-iii-new}
		\|\nabla(u_k - V_{\delta_k,P_k})\|_{2,\Omega}^2
		= \|\nabla(\hat{v}_k - V_1)\|_{2,\hat\Omega_k}^2.
	\end{equation}
	So it suffices to show $\|\nabla(\hat{v}_k - V_1)\|_{2,\hat\Omega_k} \to 0$.
	
	Transferring \eqref{eq:main} through the isometry $T_{P_k}$ and rescaling
	by $\delta_k$, the function $\hat v_k$ satisfies
	\begin{equation}
		\begin{cases}
			-\Delta\hat v_k+\lambda\delta_k^2\,\hat v_k
			+\alpha_k\delta_k\,\hat v_k^{2^\sharp-1}=\hat v_k^{2^*-1}
			& \text{in }\hat\Omega_k,\\[2pt]
			\frac{\partial\hat v_k}{\partial\nu}=\hat v_k^{2^\sharp-1}
			& \text{on }\partial\hat\Omega_k,\\[2pt]
			0<\hat v_k\le \hat v_k(0)=\kappa_n.
		\end{cases}
		\label{eq:hatv-eq}
	\end{equation}
	Since $T_{P_k}^{-1}$ maps $\nu(P_k)$ to $-e_n$ and
	$\operatorname{dist}(P_k,\partial\Omega)/\delta_k\to 0$, a standard
	$C^1$-flattening argument applied to $\partial\hat\Omega_k$ near the origin
	gives, as $k\to\infty$,
	\[
	\hat\Omega_k\nearrow\mathbb R^n_+,\qquad
	\partial\hat\Omega_k\to\partial\mathbb R^n_+\quad\text{in }C^1_{\rm loc}.
	\]
Applying the elliptic estimates in~\cite{ADN} to~\eqref{eq:hatv-eq},
we obtain
\[
\hat v_k\to\hat v
\quad\text{in }C^2_{\mathrm{loc}}
(\overline{\mathbb R^n_+}),
\]
where the limit function
$\hat v\in D^{1,2}(\mathbb R^n_+)$ satisfies
\begin{equation}
	\begin{cases}
		-\Delta\hat v=\hat v^{2^*-1}
		&\text{in }\mathbb R^n_+,\\[2pt]
		-\partial_{y_n}\hat v=\hat v^{2^\sharp-1}
		&\text{on }\partial\mathbb R^n_+,\\[2pt]
		0<\hat v\le \hat v(0)=\kappa_n.
	\end{cases}
	\label{eq:limit-pde}
\end{equation}
	By the classification theorem of Li--Zhu~\cite{LZ}, we have $\hat v=V_{\varepsilon,z}$
	for some $\varepsilon>0$ and $z\in\partial\mathbb R^n_+$. Since $V_{\varepsilon,z}$ attains its maximum at $z$ and
	$\hat v=V_{\varepsilon,z}$ attains its maximum at $0$, it follows that
	$z=0$. Since
	\[
	\hat v(0)=\kappa_n=V_1(0),
	\]
	we must have
	\[
	\varepsilon^{-(n-2)/2}V_1(0)=V_1(0),
	\]
	and hence $\varepsilon=1$. It follows that
	\begin{equation}\label{eq:hatv-is-V1}
		\hat v=V_1.
	\end{equation}
	
	Set $\hat w_k:=\hat v_k-V_1\cdot\mathbf{1}_{\hat\Omega_k}$. By~\eqref{eq:isom-grad-new} and 
	$S_{\alpha_k}\to A$ as $k\to\infty$, we deduce that
	$\{\hat v_k\}$ is bounded in $D^{1,2}(\mathbb{R}^n_+)$.  It follows from \eqref{eq:hatv-is-V1} that
	\begin{equation}
		\hat w_k\rightharpoonup 0\quad\text{in }D^{1,2}(\mathbb R^n_+).
		\label{eq:wk-weak}
	\end{equation}
	Since $u_k\in\mathcal{N}_{\alpha_k}$ and $\lambda\delta_k^2\to 0$,
	$\alpha_k\delta_k\to 0$, the rescaled Nehari identity gives
	\begin{equation}\label{eq:rescaled-nehari}
		\int_{\hat\Omega_k}\!|\nabla\hat v_k|^2\,\mathrm{d}y
		=\int_{\hat\Omega_k}\!\hat v_k^{2^*}\,\mathrm{d}y
		+\int_{\partial\hat\Omega_k}\!\hat v_k^{2^\sharp}\,\mathrm{d}\sigma_y
		+o(1).
	\end{equation}
	Testing \eqref{eq:limit-pde} against $V_1$ gives
	\begin{equation}	\label{eq:V1-nehari}
		\int_{\mathbb R^n_+}|\nabla V_1|^2\,\mathrm{d}y
		=\int_{\mathbb R^n_+}V_1^{2^*}\,\mathrm{d}y
		+\int_{\partial\mathbb R^n_+}V_1^{2^\sharp}\,\mathrm{d}\sigma_y.
	\end{equation}
Subtracting \eqref{eq:V1-nehari} from~\eqref{eq:rescaled-nehari}
and applying the Br\'ezis--Lieb lemma, we obtain
	\begin{equation}	\label{eq:wk-nehari}
		\int_{\hat\Omega_k}|\nabla\hat w_k|^2\,\mathrm{d}y
		=\int_{\hat\Omega_k}|\hat w_k|^{2^*}\,\mathrm{d}y
		+\int_{\partial\hat\Omega_k}|\hat w_k|^{2^\sharp}\,\mathrm{d}\sigma_y
		+o(1).
	\end{equation}
	Since $I_{\alpha_k}(u_k)=S_{\alpha_k}\to A$
as $k\to\infty$, it follows from the rescaling identity~\eqref{eq:isom-grad-new} that
	\begin{equation}\label{eq:energy-comp-new}
		\Psi_{\hat\Omega_k}(\hat{v}_k)
		= I_{\alpha_k}(u_k) - \tfrac{1}{2}\lambda\delta_k^2\|\hat{v}_k\|_{2,\hat\Omega_k}^2
		+ \tfrac{1}{2^\sharp}\alpha_k\delta_k\|\hat{v}_k\|_{2^\sharp,\hat\Omega_k}^{2^\sharp}
		\to A = \Psi_{\R^n_+}(V_1).
	\end{equation}
	Combining~\eqref{eq:energy-comp-new}
	with~\eqref{eq:rescaled-nehari}--\eqref{eq:wk-nehari},
	we obtain
	\begin{equation}\label{eq:Psi-wk-new}
		\Psi_{\hat\Omega_k}(\hat{w}_k)
		= \Psi_{\hat\Omega_k}(\hat{v}_k) - \Psi_{\R^n_+}(V_1) + o(1) \to 0.
	\end{equation}
	Combining the definition of
	$\Psi_{\hat\Omega_k}(\hat w_k)$ with
	\eqref{eq:wk-nehari}, we obtain
	\[
	\Psi_{\hat\Omega_k}(\hat w_k)
	=\frac{1}{n}\int_{\hat\Omega_k}|\hat w_k|^{2^*}\,\mathrm{d}y
	+\frac{1}{2(n-1)}\int_{\partial\hat\Omega_k}|\hat w_k|^{2^\sharp}
	\,\mathrm{d}\sigma_y+o(1).
	\]
	Since both terms on the right-hand side are nonnegative, we have
	\[
	\int_{\hat\Omega_k}|\hat w_k|^{2^*}\,\mathrm{d}y\to 0,\qquad
	\int_{\partial\hat\Omega_k}|\hat w_k|^{2^\sharp}\,\mathrm{d}\sigma_y\to 0.
	\]
	Substituting these limits into~\eqref{eq:wk-nehari}, we conclude that
	\[
	\int_{\hat\Omega_k}|\nabla\hat w_k|^2\,\mathrm{d}y\to 0.
	\]
	Together with~\eqref{eq:red-iii-new}, this yields
	$\|\nabla(u_k-V_{\delta_k,P_k})\|_{2,\Omega}\to 0$.
\end{proof}

		\section{Energy expansion and  nonexistence}\label{sec:section4}
		
		This section is devoted to proving   Theorem~\ref{thm:main} (i)--(ii).  It is sufficient to prove that $\alpha_0$ is finite. The argument proceeds by contradiction:
			assuming $\alpha_{0}=+\infty$,  Lemma~\ref{lem3.1} and
		Lemma~\ref{lem:concentration_locus} provide a sequence of least energy
		solutions $u_{k}$ with $\alpha_{k}\to+\infty$ that weakly converge
			to zero and concentrate at boundary points $P_{k}\in\partial\Omega$.
		We can derive a precise asymptotic expansion of $I_{\alpha_{k}}(u_{k})$ around the
		concentration point and  show that  the energy
		strictly exceeds $A$ for $k$ sufficiently large. This contradict the fact that
		$I_{\alpha_{k}}(u_{k})=S_{\alpha_{k}}<A$.
		
		\subsection{Coercivity of the linearized operator}
		\label{subsec:spectral}
		
		Recall that the bubble manifold $\mathcal{M}\setminus\{0\}$
		is a smooth $(n+1)$-dimensional submanifold of $D^{1,2}(\Omega)$, and
		its tangent space at any point $V_{\varepsilon,\xi}\in\mathcal{M}$ is
		\begin{equation}\label{eq:tangent_decomp}
			T_{1,V_{\varepsilon,\xi}}(\mathcal{M})
			= \mathrm{span}\Bigl\{V_{\varepsilon,\xi},\,
			\frac{\partial V_{\varepsilon,\xi}}{\partial\varepsilon},\,
			\frac{\partial V_{\varepsilon,\xi}}{\partial\tau_i},\;1\le i \le n-1\Bigr\},
		\end{equation}
		where $\{\tau_{1},\dots,\tau_{n-1}\}=\{e_{1}(\xi),\dots,e_{n-1}(\xi)\}$
		is the tangential frame of $T_{\xi}\partial\Omega$.
		Throughout this section, the orthogonality condition
		$\phi\perp T_{1,V_{\varepsilon,\xi}}(\mathcal{M})$ refers to the
		gradient inner product in $D^{1,2}(\Omega)$:
		\begin{equation}\label{eq:ortho_def}
			\int_{\Omega}\nabla\phi\cdot\nabla V_{\varepsilon,\xi}\,\d x=0,
			\quad
			\int_{\Omega}\nabla\phi\cdot\nabla\bigl(\partial_{\varepsilon}V_{\varepsilon,\xi}\bigr)\,\d x=0,
			\quad
			\int_{\Omega}\nabla\phi\cdot\nabla\bigl(\partial_{\tau_{i}}V_{\varepsilon,\xi}\bigr)\,\d x=0
		\end{equation}
		for $1\le i\le n-1$.
		
		\begin{Lemma}[{\cite[Theorem 2.1]{CPV}}]\label{lem:kernel_Ji}
			The kernel of the linearized problem at $V_{1}$,
			\begin{equation}\label{eq:linearised}
				\begin{cases}
					-\Delta v=\dfrac{n+2}{n-2}\,V_{1}^{\,4/(n-2)}\,v
					& \text{in }\R^{n}_{+},\\[1mm]
					\dfrac{\partial v}{\partial\nu}=\dfrac{n}{n-2}\,V_{1}^{\,2/(n-2)}\,v
					& \text{on }\partial\R^{n}_{+},
				\end{cases}
			\end{equation}
			in $D^{1,2}(\mathbb{R}^n_+)$ is $n$-dimensional, spanned by
			\[
			J_{i}(x)=\frac{\partial V_{1,\xi}}{\partial x_{i}}\Big|_{\xi=0}
			=\frac{(2-n)C_{n}\,x_{i}}{\bigl(1+|x'|^{2}+(x_{n}+x_{n}^{0})^{2}\bigr)^{n/2}},
			\qquad i=1,\dots,n-1,
			\]
			and
			\[
			J_{n}(x)=\frac{\partial V_{\varepsilon,0}}{\partial\varepsilon}\Big|_{\varepsilon=1}
			=\frac{(n-2)C_{n}}{2}\cdot
			\frac{|x|^{2}-|x_{n}^{0}|^{2}-1}{\bigl(1+|x'|^{2}+(x_{n}+x_{n}^{0})^{2}\bigr)^{n/2}}.
			\]
		\end{Lemma}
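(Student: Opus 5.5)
The proof splits into an easy inclusion and a nondegeneracy statement. First, $J_1,\dots,J_n$ lie in the kernel. By Lemma~\ref{lem2.1}, the family $\{V_{\varepsilon,z}:\varepsilon>0,\ z\in\partial\mathbb R^n_+\}$ consists of solutions of \eqref{eq:half_space_critical}. Differentiating $-\Delta V_{\varepsilon,z}=V_{\varepsilon,z}^{2^*-1}$ and $-\partial_{x_n}V_{\varepsilon,z}=V_{\varepsilon,z}^{2^\sharp-1}$ in $z_i$ ($i\le n-1$) and in $\varepsilon$ at $(\varepsilon,z)=(1,0)$ gives exactly \eqref{eq:linearised}, because $(2^*-1)=\frac{n+2}{n-2}$ and $(2^\sharp-1)=\frac{n}{n-2}$. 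Direct differentiation of \eqref{eq:V1} produces the stated formulas. From the decay $|J_i|\lesssim (1+|x|)^{2-n}$ and $|\nabla J_i|\lesssim(1+|x|)^{1-n}$ we get $J_i\in D^{1,2}(\mathbb R^n_+)$. Linear independence is clear, since the $J_i$ with $i\le n-1$ are odd in $x_i$ while $J_n$ is even in $x'$.

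The real content is the reverse inclusion. I plan to transplant the problem to the round sphere. Note that $V_1$ is the restriction to $\mathbb R^n_+$ of the Aubin--Talenti bubble centred at $(0,-x_n^0)\notin\overline{\mathbb R^n_+}$. Composing a stereographic projection (based so that this bubble becomes the constant conformal factor) with the conformal covariance \eqref{eq:conf_transform} maps $\mathbb R^n_+$ onto a geodesic ball (spherical cap) $\mathcal C_{\theta_0}=\{\theta<\theta_0\}\subset S^n$, where $\theta$ is the polar angle. Writing $v=V_1\,\varphi\circ\pi$, a kernel element $v\in D^{1,2}$ corresponds to $\varphi\in H^1(\mathcal C_{\theta_0})$ solving
\[
-\Delta_{S^n}\varphi=n\,\varphi\ \text{ in }\mathcal C_{\theta_0},\qquad \partial_\theta\varphi=\mu\,\varphi\ \text{ on }\{\theta=\theta_0\}.
\]
Here $\mu$ is an explicit constant determined by $x_n^0=\sqrt{n/(n-2)}$ and by the constant mean curvature of the boundary sphere $\{\theta=\theta_0\}$. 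The value $n=\frac{n+2}{n-2}\cdot\frac{n(n-2)}{4}-\frac{n(n-2)}{4}$ comes from the conformal Laplacian. I will check that the $D^{1,2}$ condition rules out singularities at the image of infinity, which is a regular point of the cap boundary.

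Next I separate variables, $\varphi=f(\theta)Y_\ell(\omega)$ with $Y_\ell$ a spherical harmonic on $S^{n-1}$. This gives the Gegenbauer-type ODE
\[
f''+(n-1)\cot\theta\, f'+\Bigl(n-\tfrac{\ell(\ell+n-2)}{\sin^2\theta}\Bigr)f=0,
\]
together with regularity at $\theta=0$, which leaves a one-dimensional solution space $f_\ell$ for each $\ell$. The Robin condition then reads $f_\ell'(\theta_0)=\mu f_\ell(\theta_0)$. For $\ell=1$ the regular solution is $\sin\theta$ (restrictions of the linear functions $x_1,\dots,x_n$ of $\mathbb R^{n+1}$ orthogonal to the axis). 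It satisfies the condition and gives the $n-1$ tangential translations $J_1,\dots,J_{n-1}$. For $\ell=0$ the regular solutions are multiples of $\cos\theta$, and the fact that $J_n$ is a kernel element forces the Robin condition to hold. So the $\ell=0$ mode contributes exactly one dimension.

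The main obstacle is excluding every $\ell\ge2$. I would first try Sturm comparison via the Riccati quantity $q_\ell=f_\ell'/f_\ell$. Near $0$ we have $f_\ell\sim\theta^\ell$, hence $q_\ell\sim\ell/\theta$, and $q_\ell$ satisfies
\[
q'=-q^2-(n-1)\cot\theta\,q-n+\frac{\ell(\ell+n-2)}{\sin^2\theta}.
\]
This right-hand side is strictly increasing in $\ell$. Comparing with $q_1=\cot\theta$, I aim to show $q_\ell>q_1$ on $(0,\theta_0]$ as long as $f_\ell$ has no zero before $\theta_0$. Then $q_\ell(\theta_0)>\cot\theta_0=\mu$, so the Robin condition fails. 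The delicate point is a possible zero of $f_\ell$ in $(0,\theta_0)$. Since $\sin^{\ell}\theta\,P(\cos\theta)$-type solutions with $\ell\ge2$ have no interior zeros below the first zero of the $\ell=1$ solution, this should not occur. If it does cause trouble, I would fall back on a Rayleigh-quotient/Pohozaev argument: multiply by $f_\ell\sin^{n-1}\theta$, integrate, and use $\ell(\ell+n-2)>n-1$.
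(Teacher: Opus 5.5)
\paragraph{Comparison with the paper.} The paper gives no argument for this lemma; it is quoted from \cite[Theorem~2.1]{CPV}. Your conformal transplant to a spherical cap is therefore an independent route, and its analytic core is sound.

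Set $y=x+x_n^0e_n$ and $\zeta=\bigl(\frac{2y}{1+|y|^2},\frac{|y|^2-1}{|y|^2+1}\bigr)\in S^n$. Then $\mathbb{R}^n_+=\{\zeta\cdot p>\cos\theta_0\}$ with $p=(0,\dots,0,1,x_n^0)/\sqrt{1+(x_n^0)^2}$ and $\cos^2\theta_0=\frac{n}{2(n-1)}$. The kernel problem becomes $-\Delta_{S^n}\varphi=n\varphi$, $\partial_\theta\varphi=\mu\varphi$ with $\mu=\cot\theta_0=x_n^0$, as you say.

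The exclusion of $\ell\ge2$ is easier than you fear. The quantity $d:=q_\ell-\cot\theta$ satisfies
\[
d'=-d\,(q_\ell+n\cot\theta)+\frac{(\ell-1)(\ell+n-1)}{\sin^2\theta},
\]
and $d\sim(\ell-1)/\theta>0$ near $0$. At a first zero of $d$ the equation would force $d'>0$, so $d$ stays positive on the maximal interval where $f_\ell>0$. Hence $f_\ell/\sin\theta$ is increasing there, so $f_\ell$ has no zero in $(0,\pi)$ and $q_\ell(\theta_0)>\mu$. No Rayleigh--Pohozaev fallback is needed.

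\paragraph{The error: the low modes.} The degree-one harmonics on $S^{n-1}$ form an $n$-dimensional space; your own list $x_1,\dots,x_n$ has $n$ members. In the coordinates above,
\[
\frac{J_i}{V_1}=\frac{2-n}{2}\,\zeta_i\quad(1\le i\le n-1),\qquad
\frac{J_n}{V_1}=\frac{n-2}{2}\bigl(\zeta_{n+1}-x_n^0\zeta_n\bigr).
\]
All of these are linear functions orthogonal to $p$, i.e.\ modes $\sin\theta\,Y_1(\omega)$. So the $\ell=1$ sector alone is exactly $\mathrm{span}\{J_1,\dots,J_n\}$, dilation included.

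The $\ell=0$ solution $f_0=\cos\theta=\zeta\cdot p$ has $f_0'(\theta_0)/f_0(\theta_0)=-\tan\theta_0<0<\mu$, so it violates the Robin condition. It is the infinitesimal motion changing the ratio between the depth of the bubble centre and its scale, which Li--Zhu pin at $x_n^0$.

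Your sentence ``the fact that $J_n$ is a kernel element forces the Robin condition to hold'' is therefore false. It presupposes that $J_n$ is radial about the cap's axis, and it is not. Taken literally, your $\ell=1$ step (every $\sin\theta\,Y_1$ satisfies the Robin condition) plus your $\ell=0$ claim would give an $(n+1)$-dimensional kernel. Your count $(n-1)+1$ is right only because two mistakes cancel.

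\paragraph{Fix.} Count all $n$ degree-one harmonics in the $\ell=1$ step. Exclude $\ell=0$, either by the one-line computation above or by the same comparison: there $(\ell-1)(\ell+n-1)=-(n-1)<0$, which gives $q_0<\cot\theta$. The correct tally is $n+0+0$. With these corrections the argument is a complete, self-contained alternative to the citation.
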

			Let $\mathcal{V} \subset D^{1,2}(\mathbb{R}^n_+)$ denote the
		$(n+1)$-dimensional subspace
		\begin{equation}\label{eq:V_space}
			\mathcal{V} := \mathrm{span}\{V_1, J_1, \ldots, J_n\}.
		\end{equation}
		Define the limit quadratic form
		\begin{equation}\label{eq:Q_infty}
			Q_{\infty}(\psi)
			:=\int_{\R^{n}_{+}}\!|\nabla\psi|^{2}\,\d x
			-(2^{*}-1)\!\int_{\R^{n}_{+}}\!V_1^{\,2^{*}-2}\psi^{2}\,\d x
			-(2^{\sharp}-1)\!\int_{\partial\R^{n}_{+}}\!V_1^{\,2^{\sharp}-2}\psi^{2}\,\d x'.
		\end{equation}
		
		Set $T_{c}:=-\sqrt{n/(n-2)}$,
		$\rho(z):=\frac{2}{1+|z'|^{2}+(z_{n}-T_{c})^{2}}$, and
		$\widetilde C_{n}:=2^{-(n-2)/2}C_{n}$, and let
		$\pi_{s}\colon\R^{n}_{+}\to\Sigma\subset S^{n}$ denote the
		stereographic projection centred at $(0,\dots,0,T_{c})$
		\textup{(see~\cite[(3.1)]{HanLi2000})}. The associated conformal
		transformation $\mathcal T\colon H^{1}(\Sigma)\to D^{1,2}(\R^{n}_{+})$ is
		\begin{equation}\label{eq:adjusted_transform}
			\bigl(\mathcal T\Phi\bigr)(z)
			:=\widetilde C_{n}\,\rho(z)^{(n-2)/2}\,\Phi\bigl(\pi_{s}^{-1}(z)\bigr)
			=V_{1}(z)\,\Phi\bigl(\pi_{s}^{-1}(z)\bigr),\qquad z\in\R^{n}_{+}.
		\end{equation}
		The associated quadratic form on $H^{1}(\Sigma)$ is
		\begin{equation}\label{eq:Q2_def_lemma}
			Q_{2}(\Phi,\Phi):=\tfrac{1}{2}\!\int_{\Sigma}\!\bigl(|\nabla_{g}\Phi|^{2}-n\Phi^{2}\bigr)\,\d\sigma_{g}
			+\tfrac{T_{c}}{2}\!\int_{\partial\Sigma}\!\Phi^{2}\,\d\tau.
		\end{equation}
		
		\begin{Lemma}\label{lem:conformal_equiv}
			The map $\mathcal T$ in~\eqref{eq:adjusted_transform} is a
			linear isomorphism between $H^{1}(\Sigma)$ and
			$D^{1,2}(\R^{n}_{+})$ satisfying the following properties.
			\begin{enumerate}
				\item[\textup{(i)}] For every $\Phi\in H^{1}(\Sigma)$,
				\begin{equation}\label{eq:Q_infty_Q2}
					Q_{\infty}(\mathcal T\Phi)=2\widetilde C_{n}^{2}\,Q_{2}(\Phi,\Phi).
				\end{equation}
				\item[\textup{(ii)}] The map $\mathcal T$ restricts to a bijection
				\begin{equation}\label{eq:kernel_lemma}
					\mathrm{Ker}\,Q_{2}=\mathrm{span}\{1,\zeta_{1},\dots,\zeta_{n}\}
					\;\xrightarrow{\;\mathcal T\;}\;
					\mathcal{V}=\mathrm{span}\{V_{1},J_{1},\dots,J_{n}\},
				\end{equation}
				with $\mathcal T 1=V_{1}$ and $\mathcal T(\zeta_{l}|_{\Sigma})=J_{l}$
				for $l=1,\dots,n$.
				\item[\textup{(iii)}] There exist constants
				$0<K_{*}(n)\le K^{*}(n)<\infty$ such that
				\begin{equation}\label{eq:norm_equivalence}
					\frac{1}{K_{*}}\,\|\Phi\|_{H^{1}(\Sigma)}^{2}
					\le\|\nabla(\mathcal T\Phi)\|_{L^{2}(\R^{n}_{+})}^{2}
					\le K^{*}\,\|\Phi\|_{H^{1}(\Sigma)}^{2}
					\end{equation}
for every  $\Phi\in H^{1}(\Sigma)$.
			\end{enumerate}
		\end{Lemma}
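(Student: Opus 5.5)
The plan is to derive all three statements from a single conformal covariance identity for the stereographic projection $\pi_s$. Since the pole $(0,\dots,0,T_c)$ lies at height $T_c=-x_n^0<0$, the map $\pi_s^{-1}$ sends $\R^n_+$ onto a spherical cap $\Sigma\subset S^n$. Its boundary $\partial\Sigma$ is a geodesic sphere of constant mean curvature $h_0$, which depends only on $T_c$. The round metric pulls back to $g=\rho^2|dz|^2$ on $\R^n_+$.

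\emph{Step 1 (normalization).} I first check that $\widetilde C_n\rho^{(n-2)/2}=V_1$. This holds because $\widetilde C_n 2^{(n-2)/2}=C_n$ and $-T_c=x_n^0$. In particular $\mathcal T 1=V_1$, which is the first part of (ii).

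\emph{Step 2 (covariance identity).} For $u=\mathcal T\Phi$ I use the conformal covariance of the conformal Laplacian with the boundary operator $\partial_\nu+\frac{n-2}{2}h$, as in \eqref{eq:conf_transform} and \cite{HanLi2000}. This gives
\[
\int_{\R^n_+}|\nabla u|^2\,dz=\widetilde C_n^{2}\Bigl(\int_\Sigma\bigl(|\nabla_g\Phi|^2+\tfrac{n(n-2)}{4}\Phi^2\bigr)\,d\sigma_g+\tfrac{n-2}{2}h_0\int_{\partial\Sigma}\Phi^2\,d\tau\Bigr),
\]
since $\R^n_+$ is flat with a totally geodesic boundary. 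For the potential terms I would write $V_1^{2^*-2}u^2=\widetilde C_n^{2^*}\rho^n\Phi^2$ and use $\widetilde C_n^{4/(n-2)}=n(n-2)/4$, with $\rho^n\,dz=d\sigma_g$. On the boundary, $V_1^{2^\sharp-2}u^2=\widetilde C_n^{2^\sharp}\rho^{n-1}\Phi^2$ and $\rho^{n-1}dz'=d\tau$. The interior coefficients then combine as $\frac{n(n-2)}{4}-\frac{n+2}{n-2}\cdot\frac{n(n-2)}{4}=-n$. Relating $h_0$ to $T_c$ and combining the boundary coefficients should produce $T_c$, up to the overall factor $1/2$ in $Q_2$. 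This yields \eqref{eq:Q_infty_Q2}. I expect this boundary bookkeeping to be the main obstacle: the sign convention for $h_0$ (the normal of $\partial\Sigma$ points away from the cap) and the precise powers of $\widetilde C_n$ must match the stated constant $2\widetilde C_n^2$. I would verify the result by testing on $\Phi\equiv1$, where both sides are explicitly computable from Lemma~\ref{lem2.1}.

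\emph{Step 3 (kernel correspondence, (ii)).} The coordinate functions $\zeta_l$ restricted to $\Sigma$, expressed in $z$ (shifted by $T_c$ in the last variable), are rational functions with denominator $1+|z'|^2+(z_n-T_c)^2$. Multiplying by $V_1$ gives exactly the profiles $J_l$ of Lemma~\ref{lem:kernel_Ji}, up to constant multiples, which I would absorb into the normalization of $\zeta_l$. $\mathcal T$ is injective and linear, and both spans are $(n+1)$-dimensional, so it maps one bijectively onto the other. To justify that this span is the relevant kernel, I would use the identity from Step 2: $\mathcal T^{-1}$ transports the kernel in Lemma~\ref{lem:kernel_Ji} to $\mathrm{span}\{\zeta_l\}$, and the constant $1$ is added as the image of $V_1$.

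\emph{Step 4 (norm equivalence, (iii)).} The upper bound follows from the identity in Step 2 together with the trace inequality $\|\Phi\|_{L^2(\partial\Sigma)}\le C\|\Phi\|_{H^1(\Sigma)}$ on the compact manifold $\Sigma$. For the lower bound I first control $\|\Phi\|_{L^2(\Sigma)}$. Since $\rho^n\,dz=d\sigma_g$, we have
\[
\|\Phi\|_{L^{2^*}(\Sigma)}=\widetilde C_n^{-1}\|u\|_{L^{2^*}(\R^n_+)}\le C\|\nabla u\|_{L^2},
\]
and Hölder's inequality on $\Sigma$, which has finite volume, gives $L^2$ control. The gradient is then recovered from the identity in Step 2, using the $\varepsilon$-trace inequality to absorb the boundary term if $h_0$ enters with a negative sign. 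Finally, $\mathcal T$ is surjective onto $D^{1,2}(\R^n_+)$ because $\Phi=(u/V_1)\circ\pi_s$ inverts it, and the same estimates show that this $\Phi$ lies in $H^1(\Sigma)$.
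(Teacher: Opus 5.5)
Your proof is correct. For (i) and (iii) it follows the paper's argument: Han--Li's transfer identity for $\int_{\R^n_+}|\nabla(\mathcal T\Phi)|^2$, combined with $V_1^{2^*-2}=\frac{n(n-2)}{4}\rho^2$ and $V_1^{2^\sharp-2}=\frac{\sqrt{n(n-2)}}{2}\rho$. Your $\Phi\equiv1$ test does fix $h_0=-T_c=\sqrt{n/(n-2)}>0$, and the boundary coefficients then combine to $\frac{n-2}{2}h_0-\frac n2 h_0=T_c$. Because $h_0>0$, the lower bound in (iii) needs no $\varepsilon$-trace absorption. You also prove that $\mathcal T$ is onto, which the paper only asserts.

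The real difference is in (ii). The paper quotes Han--Li's Proposition~3.2 for an $(n+1)$-dimensional ``kernel'' and compares formulas. You instead polarize (i) and pull back the $n$-dimensional kernel of Lemma~\ref{lem:kernel_Ji}, obtaining $\mathrm{Ker}\,Q_2=\mathrm{span}\{\zeta_1,\dots,\zeta_n\}$, and then adjoin $1=\mathcal T^{-1}V_1$ by hand. This transport uses the surjectivity from your Step~4, so that step must come first.

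Your route is self-contained and is the more accurate one, so say so outright rather than calling the span ``the relevant kernel''. In fact $1\notin\mathrm{Ker}\,Q_2$, since $Q_2(1,1)=-\frac n2|\Sigma|+\frac{T_c}{2}|\partial\Sigma|<0$; equivalently, $Q_\infty(V_1)<0$ by the equation for $V_1$. So $\mathrm{span}\{1,\zeta_1,\dots,\zeta_n\}$ is the kernel plus a negative direction. The provable content of (ii) is that $\mathcal T$ maps this span bijectively onto $\mathcal V$.

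One detail in Step~3 needs fixing. The $n$-th function cannot be the standard coordinate $\zeta_n$: its image under $\mathcal T$ is a multiple of $\partial_{z_n}V_1$, which fails the linearized boundary condition and so is not in the kernel. It must be the ambient linear coordinate orthogonal to $e_1,\dots,e_{n-1}$ and to the axis of the cap, which is a combination of $\zeta_n$ and $\zeta_{n+1}$. With that choice its image under $\mathcal T$ is a multiple of $J_n$.
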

		
		\begin{proof}
			Write $\psi=\mathcal T\Phi$ for brevity. From the
			expression $V_{1}=C_{n}(\rho/2)^{(n-2)/2}$ and the identity
			$C_{n}^{2^{*}-2}=C_{n}^{4/(n-2)}=n(n-2)$, one obtains
			\begin{equation}\label{eq:V1_rho_identities}
				V_{1}^{\,2^{*}-2}=\tfrac{n(n-2)}{4}\rho^{2},\qquad
				V_{1}^{\,2^{\sharp}-2}=\tfrac{\sqrt{n(n-2)}}{2}\rho.
			\end{equation}
			Han--Li's conformal transfer identity
			\cite[(3.3)]{HanLi2000} applied to $\psi=\mathcal T\Phi$ gives
			\begin{equation}\label{eq:grad_transfer_lemma}
				\int_{\R^{n}_{+}}\!|\nabla\psi|^{2}\,\d z
				=\widetilde C_{n}^{2}\!\left[\int_{\Sigma}\!|\nabla_{g}\Phi|^{2}\,\d\sigma_{g}
				+\tfrac{n(n-2)}{4}\!\int_{\Sigma}\!\Phi^{2}\,\d\sigma_{g}
				-\tfrac{n-2}{2}T_{c}\!\int_{\partial\Sigma}\!\Phi^{2}\,\d\tau\right]\!,
			\end{equation}
			and $\d\sigma_{g}=\rho^{n}\,\d z$,
			$\d\tau=\rho^{n-1}\,\d z'|_{z_{n}=0}$. Combined with
			\eqref{eq:V1_rho_identities}, yield
			\begin{equation}\label{eq:L2_transfer_lemma}
				\begin{aligned}
					\int_{\R^{n}_{+}}\!V_{1}^{2^{*}-2}\psi^{2}\,\d z
					&=\widetilde C_{n}^{2}\,\tfrac{n(n-2)}{4}\!\int_{\Sigma}\!\Phi^{2}\,\d\sigma_{g},\\
					\int_{\partial\R^{n}_{+}}\!V_{1}^{2^{\sharp}-2}\psi^{2}\,\d\sigma
					&=\widetilde C_{n}^{2}\,\tfrac{\sqrt{n(n-2)}}{2}\!\int_{\partial\Sigma}\!\Phi^{2}\,\d\tau.
				\end{aligned}
			\end{equation}
			Substituting \eqref{eq:grad_transfer_lemma}--\eqref{eq:L2_transfer_lemma}
			into definition~\eqref{eq:Q_infty} of $Q_{\infty}(\psi)$ gives
			\begin{align*}
				Q_{\infty}(\psi)
				&=\widetilde C_{n}^{2}\!\left[\int_{\Sigma}\!|\nabla_{g}\Phi|^{2}\,\d\sigma_{g}
				+\tfrac{n(n-2)}{4}\!\int_{\Sigma}\!\Phi^{2}\,\d\sigma_{g}
				-\tfrac{n-2}{2}T_{c}\!\int_{\partial\Sigma}\!\Phi^{2}\,\d\tau\right]\\
				&\quad
				-(2^{*}-1)\widetilde C_{n}^{2}\,\tfrac{n(n-2)}{4}\!\int_{\Sigma}\!\Phi^{2}\,\d\sigma_{g}
				-(2^{\sharp}-1)\widetilde C_{n}^{2}\,\tfrac{\sqrt{n(n-2)}}{2}\!\int_{\partial\Sigma}\!\Phi^{2}\,\d\tau.
			\end{align*}
			By \eqref{eq:Q2_def_lemma}, we have
			$Q_{\infty}(\psi)=2\widetilde C_{n}^{2}Q_{2}(\Phi,\Phi)$.
			
			As for (ii), it follows from Han and Li~\cite[Proposition~3.2]{HanLi2000} that the kernel of $Q_{2}$ is given by
			\[
			\mathrm{Ker}\, Q_{2} = \mathrm{span}\{1,\zeta_{1},\dots,\zeta_{n}\}.
			\]
			Substituting $\Phi=1$ in~\eqref{eq:adjusted_transform} gives
			$\mathcal T 1=\widetilde C_{n}\rho^{(n-2)/2}=V_{1}$; substituting
			$\Phi=\zeta_{l}|_{\Sigma}$ gives $\mathcal T(\zeta_{l}|_{\Sigma})=J_{l}$
			by direct comparison with the explicit formula in Lemma~\ref{lem:kernel_Ji}.
			
			Finally, we prove item (iii). Since $-\tfrac{n-2}{2}T_{c}=\tfrac{\sqrt{n(n-2)}}{2}>0$,
			\eqref{eq:grad_transfer_lemma} gives
			\begin{equation*}
				\begin{aligned}
				\|\nabla\psi\|_{2,\R^{n}_{+}}^{2}
				&=\widetilde C_{n}^{2}\!\left[\int_{\Sigma}|\nabla_{g}\Phi|^{2}\,\d\sigma_{g}
				+\tfrac{n(n-2)}{4}\!\int_{\Sigma}\Phi^{2}\,\d\sigma_{g}
				+\tfrac{\sqrt{n(n-2)}}{2}\!\int_{\partial\Sigma}\Phi^{2}\,\d\tau\right]\\
                &\le K^*\|\Phi\|_{H^1(\Sigma)}^2,
			\end{aligned} 
		\end{equation*}
	where
	\[
	K^{*}:=\widetilde C_{n}^{2}\left(\max\{1,n(n-2)/4\}
	+\frac{\sqrt{n(n-2)}}{2}C_{\mathrm{tr}}\right),
	\]
	and $C_{\mathrm{tr}}$ denotes the constant appearing in the trace inequality
	\[
	\|\Phi\|_{L^{2}(\partial\Sigma)}^{2}
	\le C_{\mathrm{tr}}\|\Phi\|_{H^{1}(\Sigma)}^{2}.
	\] For the lower bound, the Sobolev embedding
			and H\"older's inequality give
			\begin{equation*}
				\int_{\R^{n}_{+}}V_{1}^{2^{*}-2}\psi^{2}\,\d z
				\le\|V_{1}^{2^{*}-2}\|_{L^{n/2}(\R^{n}_{+})}\,\|\psi\|_{L^{2^{*}}(\R^{n}_{+})}^{2}
				\le C_{S}\|V_{1}^{2^{*}-2}\|_{L^{n/2}(\R^{n}_{+})}\,\|\nabla\psi\|_{L^{2}(\R^{n}_{+})}^{2}.
			\end{equation*}
			From \eqref{eq:grad_transfer_lemma}, we have
			\begin{equation*}
				\|\Phi\|_{H^1(\Sigma)}^2=\widetilde C_{n}^{-2}\left[\|\nabla \psi\|_{L^2(\R_+^n)}-\tfrac{n(n-2)}{4}\!\int_{\Sigma}\!\Phi^{2}\,\d\sigma_{g}+\tfrac{\sqrt{n(n-2)}}{2}\!\int_{\partial\Sigma}\!\Phi^{2}\,\d\tau\right].
			\end{equation*}
		Using	\eqref{eq:L2_transfer_lemma} to express
			 $\int_{\Sigma}\!\Phi^{2}\,\d\sigma_{g}$, we conclude that $	\|\Phi\|_{H^1(\Sigma)}^2\le K_*\|\nabla \psi\|_{L^2(\R_+^n)}$
			with $K_{*}$ depending only on $n$.
		\end{proof}
		
		\begin{Lemma}\label{lem:half_space_coercivity}
			There exists $c_{0}=c_{0}(n)>0$ such that
			\begin{equation}\label{eq:spectral_gap_final}
				Q_{\infty}(\psi)\;\ge\;c_{0}\,\|\nabla\psi\|_{2,\R_+^n}^{2}
				\qquad\text{for every }\psi\in\mathcal{V}^{\perp}.
			\end{equation}
		\end{Lemma}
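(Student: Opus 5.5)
The plan is to transfer the problem to the spherical cap $\Sigma$ via the conformal map $\mathcal T$ of Lemma~\ref{lem:conformal_equiv}. There $Q_2$ is a quadratic form on $H^1(\Sigma)$ of the shape "Dirichlet form minus compact perturbation", so its spectrum is discrete and the coercivity away from the kernel becomes a spectral gap statement.

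\emph{Step 1 (spectral structure of $Q_2$).} The form $Q_2(\Phi,\Phi)$ is $\tfrac12\int_\Sigma|\nabla_g\Phi|^2$ plus terms that are compact relative to the $H^1(\Sigma)$ norm: the $L^2(\Sigma)$ term and the $L^2(\partial\Sigma)$ trace term, both via compact embeddings on the compact manifold with boundary $\Sigma$. Hence the generalized eigenvalue problem
\[
Q_2(\Phi,\cdot)=\mu\,\langle\Phi,\cdot\rangle_{H^1(\Sigma)}
\]
has discrete spectrum accumulating only at $\tfrac12$, each eigenvalue $\mu<\tfrac12$ having finite multiplicity. I will use the Han--Li analysis \cite[Proposition~3.2]{HanLi2000}, which underlies Lemma~\ref{lem:conformal_equiv}(ii). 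It shows that the eigenvalue $0$ has eigenspace exactly $\mathrm{span}\{1,\zeta_1,\dots,\zeta_n\}$. It also shows that $Q_2$ is nonnegative, i.e.\ there are no negative directions. Here the Nehari-type direction $1$ should be checked: $Q_\infty(V_1)$ is in fact negative, since $V_1$ solves the nonlinear problem with superlinear terms. So rather than claiming $Q_2\ge0$, I would define the gap relative to the finite-dimensional subspace of nonpositive eigenvectors, and this subspace must be identified with $\mathcal T^{-1}\mathcal V$.

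This identification is the main obstacle. Concretely, Han--Li's statement is that the eigenvalues of the Steklov--Laplace type problem on $\Sigma$ below the threshold are one negative one (constants) and the zero one (coordinate functions $\zeta_l$). I would verify this from their separation-of-variables computation on the cap. Granting it, let $\mu_2>0$ be the first eigenvalue beyond these. Then every $\Phi$ that is $H^1(\Sigma)$-orthogonal to the span of the eigenvectors $\{1,\zeta_l\}$ satisfies $Q_2(\Phi,\Phi)\ge\mu_2\|\Phi\|_{H^1(\Sigma)}^2$.

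\emph{Step 2 (matching orthogonality conditions).} The hypothesis $\psi\in\mathcal V^\perp$ is orthogonality in $D^{1,2}(\mathbb R^n_+)$. Writing $\psi=\mathcal T\Phi$, this orthogonality is not literally $H^1(\Sigma)$-orthogonality to $\mathcal T^{-1}\mathcal V$. To bridge the gap I will avoid relying on the inner products coinciding. Instead I would argue by contradiction with compactness, using the norm equivalence of Lemma~\ref{lem:conformal_equiv}(iii).

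Suppose $\psi_j\in\mathcal V^\perp$ with $\|\nabla\psi_j\|_2=1$ and $Q_\infty(\psi_j)\le 1/j$. Then $\Phi_j=\mathcal T^{-1}\psi_j$ is bounded in $H^1(\Sigma)$, so after passing to a subsequence $\Phi_j\rightharpoonup\Phi$ weakly, with strong convergence in $L^2(\Sigma)$ and $L^2(\partial\Sigma)$. By (i) and weak lower semicontinuity of the gradient term, $Q_2(\Phi,\Phi)\le\liminf Q_2(\Phi_j,\Phi_j)\le0$. Moreover $\psi=\mathcal T\Phi$ stays in $\mathcal V^\perp$, since orthogonality to a fixed finite-dimensional space passes to weak limits in $D^{1,2}$.

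\emph{Step 3 (conclusion).} By Step 1, a function with $Q_2\le0$ lies in the nonpositive spectral subspace $\mathrm{span}\{1,\zeta_1,\dots,\zeta_n\}$, so $\psi\in\mathcal V\cap\mathcal V^\perp=\{0\}$. Hence the compact parts vanish in the limit:
\[
\int V_1^{2^*-2}\psi_j^2\to0,\qquad\int_{\partial}V_1^{2^\sharp-2}\psi_j^2\to0.
\]
These are compact because of the decaying weights, which matches the strong $L^2(\Sigma)$ and $L^2(\partial\Sigma)$ convergence. It follows that $Q_\infty(\psi_j)=1-o(1)$, contradicting $Q_\infty(\psi_j)\le 1/j$. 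This yields some $c_0(n)>0$.

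The delicate point flagged above remains Step 3's use of "$Q_2\le0$ implies $\Phi\in\mathrm{span}\{1,\zeta_l\}$". Because of the negative direction $1$, this requires a more careful decomposition. I would write $\Phi=a\cdot1+\Phi'$ with $\Phi'$ $Q_2$-orthogonal to $1$, and then use that $\psi\perp V_1$ in $D^{1,2}$ forces $\int_{\mathbb R^n_+}V_1^{2^*-1}\psi+\int_{\partial\mathbb R^n_+}V_1^{2^\sharp-1}\psi=0$, by testing the equation of $V_1$. That identity is exactly $Q_2$-orthogonality to $1$ up to constants, so $a=0$, and the argument closes on the nonnegative part.
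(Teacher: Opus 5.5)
Your overall scheme is sound: pass to $\Sigma$, take a weak limit of a normalized sequence with $Q_\infty(\psi_j)\to0$, and rule out a nonzero limit. You are also right to distrust the paper's route. It asserts that $\psi\in\mathcal V^{\perp}$ is equivalent to $\Phi\perp\mathrm{Ker}\,Q_2$ in $H^1(\Sigma)$ "by norm equivalence" and quotes Han--Li, although $Q_2(1,1)=-\frac n2|\Sigma|+\frac{T_c}{2}|\partial\Sigma|<0$, so $1\notin\mathrm{Ker}\,Q_2$.

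However, the step on which your argument closes is false. Put $X(\psi)=\int_{\R^n_+}V_1^{2^*-1}\psi\,dx$ and $Y(\psi)=\int_{\partial\R^n_+}V_1^{2^\sharp-1}\psi\,dx'$. Testing the equation of $V_1$ gives $\langle V_1,\psi\rangle_{D^{1,2}}=X+Y$. On the other hand, $Q_\infty(V_1,\psi)=-(2^*-2)X-(2^\sharp-2)Y=-(2^\sharp-2)(2X+Y)$, because $2^*-2=2(2^\sharp-2)$. So $D^{1,2}$-orthogonality to $V_1$ and $Q_2$-orthogonality to $1$ define different hyperplanes; they would coincide only with a single critical term. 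On $\mathcal V^\perp$ you get $Q_\infty(V_1,\psi)=-(2^\sharp-2)X(\psi)$, which need not vanish. Hence $a\neq0$ in general, $Q_\infty(\psi)=a^2Q_\infty(V_1)+Q_\infty(\psi')$ contains a negative term, and nothing follows.

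Two further problems sit in Step 1. First, $1$ is not an eigenvector of $Q_2$ relative to the $H^1(\Sigma)$ product, since $Q_2(1,\Phi)=-\frac n2\int_\Sigma\Phi+\frac{T_c}{2}\int_{\partial\Sigma}\Phi$. Second, the nonnegativity of $Q_2$ on $\{Q_2(1,\cdot)=0\}$, which you use to ``close on the nonnegative part'', is never proved.

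What is missing is a proof that the $D^{1,2}$-hyperplane $V_1^\perp$ is still nonnegative for $Q_\infty$. One way to get it:

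\emph{Index one.} $V_1$ minimizes $\Psi_{\R^n_+}$ on its Nehari manifold (Lemma~\ref{lem2.1}). The tangent space of that manifold at $V_1$ is exactly $\{\psi:\,Q_\infty(V_1,\psi)=0\}$, so $Q_\infty\ge0$ there, and $Q_\infty$ has exactly one negative direction.

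\emph{A representative for $\langle V_1,\cdot\rangle$.} The functions $C_n(1+|x'|^2+(x_n+t)^2)^{-(n-2)/2}$ solve the half-space problem with boundary coefficient $t/x_n^0$. Differentiating in $t$ at $t=x_n^0$ gives $Q_\infty(\partial_{x_n}V_1,\psi)=Y(\psi)/x_n^0$. Therefore $w:=-\frac{n-2}{4}V_1+\frac{x_n^0}{2}\partial_{x_n}V_1$ satisfies $Q_\infty(w,\cdot)=\langle V_1,\cdot\rangle_{D^{1,2}}$. Using $\int_{\R^n_+}\nabla V_1\cdot\nabla\partial_{x_n}V_1\,dx=-\frac12\int_{\partial\R^n_+}|\nabla V_1|^2\,dx'$, you get $Q_\infty(w,w)=\langle V_1,w\rangle=-\frac{n-2}{4}\|\nabla V_1\|_2^2-\frac{x_n^0}{4}\int_{\partial\R^n_+}|\nabla V_1|^2\,dx'<0$.

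\emph{Conclusion.} With index one, the negative vector $w$ forces $Q_\infty\ge0$ on its $Q_\infty$-orthogonal complement, which is precisely $V_1^\perp$. Now suppose $\psi\in\mathcal V^\perp$ had $Q_\infty(\psi)=0$. Then $\psi$ minimizes $Q_\infty$ on $V_1^\perp$, so $Q_\infty(\psi,\cdot)=c\,Q_\infty(w,\cdot)$ and $\psi-cw\in\mathrm{span}\{J_i\}$ by Lemma~\ref{lem:kernel_Ji}. Pairing with $V_1$ gives $c=0$, hence $\psi\in\mathrm{span}\{J_i\}\cap\mathcal V^\perp=\{0\}$. With this in place your compactness argument in Steps 2--3 goes through, and it can be run directly in $D^{1,2}(\R^n_+)$ without passing to $\Sigma$.
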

		
		\begin{proof}
			Let $\Phi\in H^{1}(\Sigma)$, and define $\psi$ by~\eqref{eq:adjusted_transform}, namely
			\[
			\psi=\widetilde C_{n}\rho^{(n-2)/2}\Phi.
			\]
			By Lemma~\ref{lem:conformal_equiv}\,(ii) and the norm equivalence~\eqref{eq:norm_equivalence}, the orthogonality condition
			$\psi\in\mathcal V^{\perp}$ in $D^{1,2}(\mathbb{R}^{n}_{+})$
			is equivalent to the condition that $\Phi$ is orthogonal to $\mathrm{Ker}\,Q_{2}$ in $H^{1}(\Sigma)$.
			Consequently, we may apply Han--Li~\cite[Proposition~3.4]{HanLi2000}, which yields the existence of a constant $\lambda_{*}=\lambda_{*}(n)>0$ such that
			\[
			Q_{2}(\Phi,\Phi)\ge \lambda_{*}\|\Phi\|_{H^{1}(\Sigma)}^{2}
			\qquad \text{for all } \Phi\in(\mathrm{Ker}\,Q_{2})^{\perp}.
			\]
			Combining the identity $Q_{\infty}(\psi)=2\widetilde C_{n}^{2}Q_{2}(\Phi,\Phi)$ with the estimate
			\[
			\|\nabla\psi\|_{L^{2}(\mathbb{R}^{n}_{+})}^{2}
			\le K^{*}\|\Phi\|_{H^{1}(\Sigma)}^{2},
			\]
			which follows from~\eqref{eq:norm_equivalence}, we deduce that
			\begin{equation*}
				Q_{\infty}(\psi)
				\ge 2\widetilde C_{n}^{2}Q_{2}(\Phi,\Phi)
				\ge 2\widetilde C_{n}^{2}\lambda_{*}\|\Phi\|_{H^{1}(\Sigma)}^{2}
				\ge \frac{2\widetilde C_{n}^{2}\lambda_{*}}{K^{*}}\|\nabla\psi\|_{2,\mathbb{R}^{n}_{+}}^{2}.
			\end{equation*}
We can complete the proof by setting $c_{0}:=2\widetilde C_{n}^{2}\lambda_{*}/K^{*}>0$.
		\end{proof}
		
		Let $\{(\va_k,\xi_k,a_{\va_k})\}$ satisfy $\va_k\to 0$, $a_{\va_k}\to\infty$
		with $\va_k a_{\va_k}\to 0$. For $\phi\in H^{1}(\Omega)$, define
		\begin{equation}\label{eq:Q_eps}
			\begin{aligned}
				Q_{\va_k}(\phi)
				&:=\int_{\Omega}\!|\nabla\phi|^{2}\,\d x
				+a_{\va_k}\!\int_{\Omega}\!V_{\va_k,\xi_k}^{\,2^{\sharp}-2}\phi^{2}\,\d x\\
				&\quad
				-(2^{*}-1)\!\int_{\Omega}\!V_{\va_k,\xi_k}^{\,2^{*}-2}\phi^{2}\,\d x
				-(2^{\sharp}-1)\!\int_{\partial\Omega}\!V_{\va_k,\xi_k}^{\,2^{\sharp}-2}\phi^{2}\,\d\sigma.
			\end{aligned}
		\end{equation}
		
		\begin{Lemma}\label{lem:spectral_gap}
			There exist $\gamma>0$ and $k_{0}\in\N$ such that, for every $k\ge k_{0}$,
			\begin{equation}\label{eq:coercivity}
				Q_{\varepsilon_{k}}(\phi)\ge\gamma\,\|\nabla\phi\|_{2,\Omega}^{2}
			\end{equation}
			where
			$\phi\in H^1(\Omega)$ orthogonal to $T_{1,V_{\varepsilon_{k},\xi_{k}}}(\mathcal M)$.
		\end{Lemma}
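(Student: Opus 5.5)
We argue by contradiction combined with a blow-up (rescaling) argument, reducing to the half-space coercivity of Lemma~\ref{lem:half_space_coercivity}. Since the absorption term $a_{\va_k}\int_\Omega V_{\va_k,\xi_k}^{2^\sharp-2}\phi^2\,\d x$ is nonnegative, it can be discarded: it suffices to prove the estimate for the form $\widetilde Q_k(\phi):=Q_{\va_k}(\phi)-a_{\va_k}\int_\Omega V_{\va_k,\xi_k}^{2^\sharp-2}\phi^2\,\d x$. The hypotheses $a_{\va_k}\to\infty$ and $\va_k a_{\va_k}\to0$ therefore play no role, beyond guaranteeing that the dropped term has a good sign. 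By homogeneity we work on the normalized set $\|\nabla\phi\|_{2,\Omega}=1$.

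Suppose the claim fails. Then, along a subsequence, there are $\phi_k\perp T_{1,V_{\va_k,\xi_k}}(\mathcal M)$ with $\|\nabla\phi_k\|_{2,\Omega}=1$ and $\limsup\widetilde Q_k(\phi_k)\le0$. We rescale exactly as in the proof of Lemma~\ref{lem:bubble_profile}:
\[
\hat\phi_k(y):=\va_k^{(n-2)/2}\phi_k\bigl(T_{\xi_k}(\va_k y)\bigr),\qquad y\in\hat\Omega_k:=T_{\xi_k}^{-1}(\Omega)/\va_k .
\]
The rescaled bubble is then $V_1$, and $\hat\Omega_k\nearrow\R^n_+$ with $C^1_{\rm loc}$ convergence of the boundaries. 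To place everything in a fixed space, we flatten the boundary with $\psi$ from \eqref{eq:psi_def} composed with the rescaling, and extend (or cut off) to obtain $\tilde\phi_k\in D^{1,2}(\R^n_+)$. The Jacobian errors are $O(\va_k|y|)$, which is small on $\{|y|\le R/\va_k\}$ relative to the gradient norm. Passing to a subsequence, $\tilde\phi_k\rightharpoonup\psi$ in $D^{1,2}(\R^n_+)$. The orthogonality relations \eqref{eq:ortho_def} are scale invariant. Moreover, $\va_k\partial_\va V_{\va_k,\xi_k}$ and $\va_k\partial_{\tau_i}V_{\va_k,\xi_k}$ rescale to $J_n$ and $J_i$ up to $o(1)$ in $D^{1,2}$; this uses the $C^1$ dependence of the frame on $\xi$ and the decay of $\nabla V_1$. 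Passing to the limit in the orthogonality conditions gives $\psi\in\mathcal V^\perp$.

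Next we split the quadratic form into a local and a vanishing part. The potential terms $\int V_1^{2^*-2}\tilde\phi_k^2$ and $\int_{\partial}V_1^{2^\sharp-2}\tilde\phi_k^2$ are weakly continuous. For the interior term, $V_1^{2^*-2}\in L^{n/2}$ decays like $|y|^{-4}$, so the contribution from $|y|>R$ is uniformly small by H\"older and Sobolev, while on compact sets Rellich compactness applies. The boundary term is handled the same way, using $V_1^{2^\sharp-2}\in L^{n-1}(\partial\R^n_+)$ together with the compact trace embedding on bounded sets. The region far from $\xi_k$ (beyond the flattening chart) carries only $O(\va_k^{2})$-small weights and does not affect the limit. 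It follows that
\[
0\ge\lim \widetilde Q_k(\phi_k)\ge 1-(2^*-1)\int V_1^{2^*-2}\psi^2-(2^\sharp-1)\int_{\partial}V_1^{2^\sharp-2}\psi^2 .
\]
By weak lower semicontinuity, $1\ge\|\nabla\psi\|^2$, so the right-hand side is at least $Q_\infty(\psi)+\bigl(1-\|\nabla\psi\|^2\bigr)$. Since $Q_\infty(\psi)\ge c_0\|\nabla\psi\|^2\ge0$ by Lemma~\ref{lem:half_space_coercivity}, this yields
\[
0\ge c_0\|\nabla\psi\|^2+1-\|\nabla\psi\|^2 .
\]
Because $c_0\le1$ (as $Q_\infty(\psi)\le\|\nabla\psi\|^2$), this inequality is impossible, which is the desired contradiction.

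The main obstacle is the transfer to the half-space. Three points need care: the curved boundary and the flattening, showing that the metric perturbations enter only as $o(1)\|\nabla\phi_k\|^2$; the control of the boundary trace term under this change of variables; and the verification that the orthogonality to $\partial_{\tau_i}V_{\va,\xi}$ along $\partial\Omega$ converges to orthogonality to $J_i$. If the $H^1(\Omega)$ versus $D^{1,2}$ issue causes trouble, the fallback is to work with a cut-off $\chi(x)\phi_k$ localized near $\xi_k$ and estimate the commutator terms by $\|\phi_k\|_{L^{2^*}}$ together with the decay of $V_{\va_k,\xi_k}$ away from $\xi_k$.
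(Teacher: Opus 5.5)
\paragraph{The opening reduction is where your proof breaks.} The inequality $\widetilde Q_k(\phi)\ge\gamma\|\nabla\phi\|_{2,\Omega}^2$ is false, so no blow-up argument can prove it.

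The orthogonality conditions \eqref{eq:ortho_def} and the seminorm $\|\nabla\phi\|_{2,\Omega}$ are both blind to constants, and $\phi\in H^1(\Omega)$ may contain one. For $\phi\equiv c\neq0$,
\[
\widetilde Q_k(c)=-c^2\Bigl[(2^*-1)\int_\Omega V_{\varepsilon_k,\xi_k}^{2^*-2}\,dx+(2^\sharp-1)\int_{\partial\Omega}V_{\varepsilon_k,\xi_k}^{2^\sharp-2}\,d\sigma\Bigr]<0=\gamma\|\nabla c\|_{2,\Omega}^2 .
\]
More generally, for any admissible $w$ with $\|\nabla w\|_{2,\Omega}=1$, $\widetilde Q_k(c+w)\to-\infty$ as $c\to\infty$. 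So even on your normalized set the infimum is $-\infty$ for every $k$.

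In your argument this appears where you assume compactness. The normalization $\|\nabla\phi_k\|_{2,\Omega}=1$ bounds neither $\|\phi_k\|_{2^*,\Omega}$ nor $\|\phi_k\|_{2^\sharp,\partial\Omega}$. Consequently:
\begin{itemize}
\item the rescaled functions need not have a weak limit in $D^{1,2}(\R^n_+)$;
\item your H\"older--Sobolev tail bounds for the potential terms are unavailable;
\item the region away from $\xi_k$ is not negligible, since small weights multiply an unbounded $\phi_k^2$.
\end{itemize}
Your cut-off fallback relies on the same missing bound on $\|\phi_k\|_{2^*,\Omega}$.

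\paragraph{The hypothesis $a_{\varepsilon_k}\to\infty$ is essential.} It is exactly what makes $Q_{\varepsilon_k}$ positive in the constant direction. The integrals $\int_\Omega V_{\varepsilon_k,\xi_k}^{2^\sharp-2}\,dx$ and $\int_{\partial\Omega}V_{\varepsilon_k,\xi_k}^{2^\sharp-2}\,d\sigma$ are both of exact order $\varepsilon_k$, while $\int_\Omega V_{\varepsilon_k,\xi_k}^{2^*-2}\,dx=O(\varepsilon_k^2)$ for $n\ge5$. Hence $Q_{\varepsilon_k}(1)\ge c\,a_{\varepsilon_k}\varepsilon_k>0$ for large $k$.

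\paragraph{A repair that keeps the absorption term.}
\begin{enumerate}
\item Write $\phi=c+w$ with $\int_\Omega w\,dx=0$. Then $w$ is still orthogonal and $\nabla w=\nabla\phi$.
\item Poincar\'e--Wirtinger gives $\|w\|_{H^1(\Omega)}\le C\|\nabla w\|_{2,\Omega}$. With this bound your blow-up argument is sound, including the one-step contradiction $0\ge c_0\|\nabla\psi\|^2+1-\|\nabla\psi\|^2$. It yields $\widetilde Q_k(w)\ge\gamma_0\|\nabla w\|_{2,\Omega}^2$.
\item Absorb the cross terms with Young's inequality. The negative potential terms cost $(1+\eta)$ times their value on $w$, plus $C_\eta c^2\varepsilon_k$.
\item For the absorption term, use Cauchy--Schwarz for the nonnegative form $a_{\varepsilon_k}\int_\Omega V_{\varepsilon_k,\xi_k}^{2^\sharp-2}fg\,dx$. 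Combine it with Hardy's inequality, $a_{\varepsilon_k}\int_\Omega V_{\varepsilon_k,\xi_k}^{2^\sharp-2}w^2\,dx\le C a_{\varepsilon_k}\varepsilon_k\|w\|_{H^1(\Omega)}^2=o(\|\nabla w\|_{2,\Omega}^2)$. This is where $\varepsilon_k a_{\varepsilon_k}\to0$ is used.
\item What remains on the constant is $c^2$ times $(1-\eta)a_{\varepsilon_k}\int_\Omega V_{\varepsilon_k,\xi_k}^{2^\sharp-2}\,dx-C_\eta\varepsilon_k$, which is positive for large $k$.
\end{enumerate}

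\paragraph{Comparison with the paper.} Apart from this point, your route is the paper's: rescaling, passing the orthogonality conditions to the limit, then Lemma~\ref{lem:half_space_coercivity}. Your handling of the $\varepsilon_k$ factors and the frame derivative in the orthogonality conditions is somewhat more careful than the paper's. The paper's written proof has the same blind spot, however. It bounds the absorption term by $C\varepsilon_k a_{\varepsilon_k}\|\nabla\tilde\phi_k\|^2$ and asserts weak convergence in $D^{1,2}(\R^n_+)$, and both fail on constants. So following the paper will not close this gap; the constant-splitting above does.
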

		
		\begin{proof}
		Arguing by contradiction, we suppose that there exist sequences
		$\varepsilon_k\to 0^{+}$, $\xi_k\to P_0\in\partial\Omega$, and
		$\phi_k\in H^1(\Omega)$ such that
			\begin{equation}\label{eq:H_assumptions}
				\|\nabla\phi_{k}\|_{2,\Omega}=1,
				\quad
				\phi_{k}\perp T_{1,V_{\varepsilon_{k},\xi_{k}}}(\mathcal M),
				\quad
				Q_{\varepsilon_{k}}(\phi_{k})\to 0.
			\end{equation}
			Define the rescaled function
			\begin{equation}\label{eq:blow_up_def}
				\tilde{\phi}_k(z)
				:= \varepsilon_k^{(n-2)/2}\phi_k\!\left(\varepsilon_k T_{\xi_k}z\right),
				\quad z \in \Omega_k:=\frac{T^{-1}_{\xi_k}(\Omega)}{\varepsilon_k}.
			\end{equation}
			Then $\|\nabla\widetilde\phi_{k}\|_{2,\Omega_{k}}=1$ by
			scale invariance, and $\Omega_{k}\nearrow\R^{n}_{+}$ as $k\to\infty$.
			By Definition~\ref{def:bubble} and~\eqref{equa2.9},
			$\varepsilon_{k}^{(n-2)/2}V_{\varepsilon_{k},\xi_{k}}(\varepsilon_{k}T_{\xi_{k}}z)=V_{1}(z)$.
			So the change of variables
			$\d x=\varepsilon_{k}^{n}\,\d z$,
			$\d\sigma_{x}=\varepsilon_{k}^{n-1}\,\d\sigma_{z}$, and
			$\phi_{k}^{2}(x)=\varepsilon_{k}^{-(n-2)}\widetilde\phi_{k}^{2}(z)$ yields
			\begin{equation}\label{eq:scaling_identities}
				\int_\Omega V_{\varepsilon_k,\xi_k}^{2^*-2}\phi_k^2\,\d x
				= \int_{\Omega_k}V_1^{2^*-2}\tilde{\phi}_k^2\,\d z,
				\qquad
				\int_{\partial\Omega}V_{\varepsilon_k,\xi_k}^{2^\sharp-2}\phi_k^2\,\d\sigma
				= \int_{\partial\Omega_k}V_1^{2^\sharp-2}\tilde{\phi}_k^2\,\d\sigma_z,
			\end{equation}
			while the perturbation term satisfies
			\begin{equation}\label{eq:perturb_est}
				\left|a_{\varepsilon_k}\int_\Omega V_{\varepsilon_k,\xi_k}^{2^\sharp-2}\phi_k^2\,\d x\right|
				= a_{\varepsilon_k}\varepsilon_k\int_{\Omega_k}V_1^{2^\sharp-2}\tilde{\phi}_k^2\,\d z
				\le C(\varepsilon_k a_{\varepsilon_k})\|\nabla\tilde{\phi}_k\|^2 \to  0,
			\end{equation}
			since $\varepsilon_k a_{\varepsilon_k}\to 0$. Combining these identities gives
			\begin{equation}\label{eq:scaling_identity}
				Q_{\varepsilon_k}(\phi_k) = Q_{\Omega_{k}}(\tilde{\phi}_k) + o(1),
			\end{equation}
			where
			\begin{equation}\label{eq:Q_tilde}
				Q_{\Omega_{k}}(\tilde{\phi}_k)
				:= \int_{\Omega_k}|\nabla\tilde{\phi}_k|^2\,\d z
				-(2^*-1)\int_{\Omega_k}V_1^{2^*-2}\tilde{\phi}_k^2\,\d z
				-(2^\sharp-1)\int_{\partial\Omega_k}V_1^{2^\sharp-2}\tilde{\phi}_k^2\,\d\sigma_{z}.
			\end{equation}
		Up to a subsequence, there exists $\widetilde\phi \in D^{1,2}(\mathbb{R}^{n}_{+})$ such that
		$\widetilde\phi_{k} \rightharpoonup \widetilde\phi$ weakly in $D^{1,2}(\mathbb{R}^{n}_{+})$
		and $\widetilde\phi_{k} \to \widetilde\phi$ strongly in $L^{2}_{\mathrm{loc}}(\overline{\mathbb{R}^{n}_{+}})$.
			By weak lower semicontinuity, for any compact set $K \subset \mathbb{R}^n_+$
			there exists $k_0(K)$ such that $K \subset \Omega_k$ for all $k \geq k_0(K)$, and hence
			\begin{equation}\label{eq:grad_lsc}
				\int_K |\nabla\tilde{\phi}|^2
				\leq \liminf_{k\to\infty}\int_K |\nabla\tilde{\phi}_k|^2
				\leq \liminf_{k\to\infty}\int_{\Omega_k}|\nabla\tilde{\phi}_k|^2.
			\end{equation}
			Taking the supremum over compact sets $K \nearrow \mathbb{R}^n_+$ and
			combining with~\eqref{eq:scaling_identity},
			\begin{equation}\label{eq:Q_infty_leq_0}
				Q_\infty(\tilde{\phi})
				\leq \liminf_{k\to\infty}Q_{\Omega_k}(\tilde{\phi}_k)
				= \liminf_{k\to\infty}Q_{\varepsilon_k}(\phi_k) = 0.
			\end{equation}
			It follows from the orthogonality condition~\eqref{eq:ortho_def} and the assumption
			$\phi_{k}\perp T_{1,V_{\varepsilon_{k},\xi_{k}}}(\mathcal M)$ that
			\begin{equation}\label{eq:ortho_Ji_pre1}
				\int_{\Omega}\nabla\phi_{k}\cdot\nabla V_{\varepsilon_{k},\xi_{k}}\,\d x=0,
				\quad
				\int_{\Omega}\nabla\phi_{k}\cdot\nabla\bigl(\partial_{\varepsilon}V_{\varepsilon_{k},\xi_{k}}\bigr)\,\d x=0,
			\end{equation}
			and
			\begin{equation}\label{eq:ortho_Ji_pre2}
				\int_{\Omega}\nabla\phi_{k}\cdot\nabla\bigl(\partial_{\tau_{i}}V_{\varepsilon_{k},\xi_{k}}\bigr)\,\d x=0
			\end{equation}
			for $1\le i\le n-1$. Under the change of variables $z=T_{\xi_k}^{-1}(x)/\varepsilon_k$,
			the bubble $V_{\varepsilon_k,\xi_k}$ rescales to $V_1$, while
			$\partial_{\varepsilon}V_{\varepsilon_k,\xi_k}$ and
			$\partial_{\tau_i}V_{\varepsilon_k,\xi_k}$ ($1\le i\le n-1$)
			rescale to $J_n$ and $J_i$ respectively.
			Hence \eqref{eq:ortho_Ji_pre1} and~\eqref{eq:ortho_Ji_pre2}
			transform to
			\begin{equation}\label{eq:ortho_Ji}
				\int_{\Omega_k}\nabla\widetilde\phi_k\cdot\nabla V_1\,\d z=0,
				\qquad
				\int_{\Omega_k}\nabla\widetilde\phi_k\cdot\nabla J_i\,\d z=0,
				\quad i=1,\dots,n.
			\end{equation}
			Since $\widetilde\phi_k\to\widetilde\phi$ in $C^1_{\rm loc}(\overline{\mathbb{R}^n_+})$ and
			$\Omega_k\nearrow\mathbb{R}^n_+$,
			as $k\to\infty$, we have
			\begin{equation}\label{eq:ortho_V1}
				\int_{\mathbb{R}^n_+}\nabla\widetilde\phi\cdot\nabla V_1\,\d z=0,
				\qquad
				\int_{\mathbb{R}^n_+}\nabla\widetilde\phi\cdot\nabla J_i\,\d z=0,
				\quad i=1,\dots,n,
			\end{equation}
			which gives that $\widetilde\phi\in\mathcal{V}^{\perp}$.
			Combining~\eqref{eq:ortho_V1} with~\eqref{eq:spectral_gap_final}
			and~\eqref{eq:Q_infty_leq_0}, we deduce that
			\begin{equation*}
				c_{0}\,\|\nabla\widetilde\phi\|_{2,\R^{n}_{+}}^{2}
				\le Q_{\infty}(\widetilde\phi)\le 0,
			\end{equation*}
			whence $\widetilde\phi\equiv 0$. Returning to~\eqref{eq:scaling_identities}, we obtain that
			\begin{equation}\label{eq:pot_vanish}
				\int_{\Omega_{k}}V_{1}^{2^{*}-2}\widetilde\phi_{k}^{2}\,\d z\to 0,
				\qquad
				\int_{\partial\Omega_{k}}V_{1}^{2^{\sharp}-2}\widetilde\phi_{k}^{2}\,\d \sigma_z\to 0.
			\end{equation}
			Substituting~\eqref{eq:pot_vanish} into~\eqref{eq:scaling_identity}, we deduce that
			$\|\nabla\phi_{k}\|_{2,\Omega}^{2}\to 0$.
			 It contradicts
			$\|\nabla\phi_{k}\|_{2,\Omega}=1$.
		\end{proof}
		
	\subsection{Proof of Theorem~\ref{thm:main}(i)--(ii) }
		
		For $\xi\in\partial\Omega$ and $\varepsilon>0$, $V_{\varepsilon,\xi}$
		denotes the boundary bubble given by Definition~\ref{def:bubble} and
		$\mathcal M$ is the bubble manifold given of Definition~\ref{def:M}.
		For $u\in H^{1}(\Omega)$, set
		\begin{equation}\label{eq:min_psi}
			d(u,\mathcal{M})
			:= \inf\bigl\{\|\nabla(u-\psi)\|_{2,\Omega}^2 : \psi\in\mathcal{M}\bigr\}.
		\end{equation}
		Before proving Theorem ~\ref{thm:main} (i)--(ii), we first present five important lemmas.
		
	\begin{Lemma}\label{lem:best_projection}
		Let $\delta>0$ and let $\{u_k\}\subset H^1(\Omega)$ satisfy
		\begin{align}
			u_k&\rightharpoonup0
			\quad\text{weakly in }H^1(\Omega)
			\quad\text{as }k\to\infty,\label{eq:bp1}\\
			d(u_k,\mathcal M)
			&\le
			\|\nabla u_k\|_{2,\Omega}^{2}-2\delta.
			\label{eq:bp2}
		\end{align}
		Then there exists $k_0\in\mathbb N$ such that, for every $k\ge k_0$,
		the distance $d(u_k,\mathcal M)$ is attained at some
		$b_kV_{\varepsilon_k,\xi_k}\in\mathcal M$.
		Writing
		\begin{equation}\label{eq:bp_decomp}
			u_k=b_kV_{\varepsilon_k,\xi_k}+\phi_k,
		\end{equation}
		we have
		\[
		\phi_k\perp
		T_{1,V_{\varepsilon_k,\xi_k}}(\mathcal M),
		\qquad
		k\ge k_0,
		\]
		where the orthogonality is understood in the sense of
		\eqref{eq:ortho_def}. Moreover, up to a subsequence,
		\[
		\varepsilon_k\to0,
		\qquad
		b_k\to b_0\neq0,
		\quad\text{as }k\to\infty.
		\]
	\end{Lemma}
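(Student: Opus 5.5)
Fix $k$ and study the function $F_k(b,\varepsilon,\xi):=\|\nabla(u_k-bV_{\varepsilon,\xi})\|_{2,\Omega}^2$ on $\mathbb R\times(0,\infty)\times\partial\Omega$. The goal is to show that a minimizing sequence for $F_k$ cannot escape to the "boundary" of the parameter space, namely $\varepsilon\to0$, $\varepsilon\to\infty$, or $b\to\pm\infty$. Compactness of $\partial\Omega$ then yields a minimizer $(b_k,\varepsilon_k,\xi_k)$. The orthogonality of $\phi_k$ follows from first-order conditions: if $\varepsilon_k>0$ is interior, then
\[
\partial_bF_k=\partial_\varepsilon F_k=\partial_{\tau_i}F_k=0,
\]
which give exactly \eqref{eq:ortho_def}. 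For the $\partial_\varepsilon$ and $\partial_{\tau_i}$ derivatives we use $b_k\neq0$ to divide out the factor $b_k$.

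\textbf{Step 1 (degenerate configurations).} Note first that $F_k(0,\cdot,\cdot)=\|\nabla u_k\|_{2,\Omega}^2$. By \eqref{eq:bp2} the infimum is strictly smaller, by $2\delta$, so any near-minimizer has $|b|$ bounded away from $0$. Expanding the square gives
\[
F_k=\|\nabla u_k\|^2-2b\langle u_k,V_{\varepsilon,\xi}\rangle_{D^{1,2}}+b^2\|\nabla V_{\varepsilon,\xi}\|^2 .
\]
By a scaling computation, $\|\nabla V_{\varepsilon,\xi}\|_{2,\Omega}^2$ tends to $\|\nabla V_1\|_{2,\mathbb R^n_+}^2$ as $\varepsilon\to0$, uniformly in $\xi$. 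As $\varepsilon\to\infty$, the bubble becomes nearly constant on $\Omega$, its gradient norm behaves like $\varepsilon^{-(n-2)}|\Omega|$, and it tends to $0$. From this we conclude two things. First, if $\varepsilon\to\infty$, then $\langle u_k,V_{\varepsilon,\xi}\rangle\to0$ by Cauchy--Schwarz. Optimizing over $b$ then gives a value of at least $\|\nabla u_k\|^2-o(1)$, which is impossible by \eqref{eq:bp2}. Second, since the optimal $b$ equals $\langle u_k,V\rangle/\|\nabla V\|^2$, the optimal $|b|$ stays bounded once $\varepsilon$ stays in a compact set or tends to $0$.

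\textbf{Step 2 (excluding $\varepsilon\to0$ for fixed $k$).} For fixed $u_k\in H^1(\Omega)$, the pairing satisfies $\langle u_k,V_{\varepsilon,\xi}\rangle\to0$ as $\varepsilon\to0$. This holds because $V_{\varepsilon,\xi}$ converges weakly to $0$ in $D^{1,2}$: it concentrates, its $L^2$ mass vanishes, and the family is uniformly bounded in $\xi$ by compactness of $\partial\Omega$. The minimization is therefore again no better than $\|\nabla u_k\|^2$, contradicting \eqref{eq:bp2}. Hence, for each $k$, the infimum is attained at some interior point with $\varepsilon_k\in(0,\infty)$ and $b_k\ne0$. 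Up to this point $k_0$ plays no role, although taking it large is harmless.

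\textbf{Step 3 (asymptotics in $k$).} This is the main obstacle: Steps 1--2 used a fixed $u_k$, whereas we now need uniform information as $k\to\infty$. We have
\[
2|b_k|\,\langle u_k,V_{\varepsilon_k,\xi_k}\rangle-b_k^2\|\nabla V_{\varepsilon_k,\xi_k}\|^2\ge2\delta ,
\]
so $\langle u_k,V_{\varepsilon_k,\xi_k}\rangle\ge\delta/|b_k|$, and $|b_k|\le C$ because $\|\nabla u_k\|$ is bounded.

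\begin{itemize}
\item Suppose, along a subsequence, $\varepsilon_k\to\bar\varepsilon\in(0,\infty)$ and $\xi_k\to\bar\xi$. Then $V_{\varepsilon_k,\xi_k}\to V_{\bar\varepsilon,\bar\xi}$ strongly in $H^1(\Omega)$, and weak convergence $u_k\rightharpoonup0$ gives a pairing tending to $0$. This contradicts the lower bound $\delta/|b_k|\ge\delta/C$.
\item Suppose $\varepsilon_k\to\infty$. The gradient-norm decay from Step 1 again sends the pairing to $0$, a contradiction.
\end{itemize}

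Hence $\varepsilon_k\to0$. Finally, $|b_k|\ge\delta/(C\,\|\nabla u_k\|)$ is bounded below, and $|b_k|\le C$, so a subsequence gives $b_k\to b_0\neq0$.
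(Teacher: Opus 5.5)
\textbf{Gap: the case $\varepsilon\to\infty$.} Your argument for this degeneration does not work, and this is exactly where the $k_0$ in the statement comes from. Your remark that $k_0$ ``plays no role'' is the symptom.

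It is true that $\langle u_k,V_{\varepsilon,\xi}\rangle\to0$ as $\varepsilon\to\infty$. But $\|\nabla V_{\varepsilon,\xi}\|_{2,\Omega}\to0$ as well. Its square behaves like $\varepsilon^{-n}|\nabla V_1(0)|^2|\Omega|$, not $\varepsilon^{-(n-2)}$, because $\nabla V_1(0)\neq0$. So the optimal coefficient $b=\langle u_k,V_{\varepsilon,\xi}\rangle/\|\nabla V_{\varepsilon,\xi}\|_{2,\Omega}^2$ blows up like $\varepsilon^{n/2}$.

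The relevant quantity is the gain $\langle u_k,V_{\varepsilon,\xi}\rangle^2/\|\nabla V_{\varepsilon,\xi}\|_{2,\Omega}^2$, i.e.\ the squared pairing with the \emph{normalized} bubble. Since $\nabla V_{\varepsilon,\xi}/\|\nabla V_{\varepsilon,\xi}\|_{2,\Omega}\to|\Omega|^{-1/2}\nu(\xi)$ uniformly on $\overline\Omega$, this gain tends to $|\Omega|^{-1}\bigl(\int_\Omega\nabla u_k\cdot\nu(\xi)\,dx\bigr)^2$. For fixed $k$ this need not be small.

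Geometrically, $bV_{\varepsilon,\xi}$ with $b\sim\varepsilon^{n/2}$ converges in the gradient seminorm to an affine function. For instance, $u=\langle x,\nu(\xi_0)\rangle$ has $d(u,\mathcal M)=0$, so it satisfies \eqref{eq:bp2} for small $\delta$, yet the infimum is not attained. Hence the conclusion of your Steps 1--2, that a minimizer exists for \emph{every} $k$, is false.

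The same flaw breaks your second bullet in Step 3. Your bound $|b_k|\le C$ comes from $|b_k|\,\|\nabla V_{\varepsilon_k,\xi_k}\|_{2,\Omega}\le2\|\nabla u_k\|_{2,\Omega}$. That bound is unavailable when $\|\nabla V_{\varepsilon_k,\xi_k}\|_{2,\Omega}\to0$, so ``pairing $\to0$'' contradicts nothing.

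\textbf{Repair.} Treat $\varepsilon\to\infty$ with the normalized pairing, and use the weak convergence $u_k\rightharpoonup0$ already at this stage, not only in Step 3. The paper does this through $a_{k,\ell}=|b_{k,\ell}|\,\varepsilon_{k,\ell}^{-n/2}$ and the limit $E_k$.

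Suppose that for some $k$ a minimizing sequence has $\varepsilon_{k,\ell}\to\infty$. Then \eqref{eq:bp2} forces $\bigl|\int_\Omega\nabla u_k\,dx\bigr|^2\ge2\delta|\Omega|$. But $\int_\Omega\nabla u_k\,dx\to0$ because $u_k\rightharpoonup0$ in $H^1(\Omega)$. So this escape happens only for finitely many $k$, which is what defines $k_0$. The same normalized computation, applied to the actual minimizers, excludes $\varepsilon_k\to\infty$ as $k\to\infty$.

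With this change, the rest of your argument goes through:
\begin{itemize}
\item excluding $\varepsilon\to0$ for fixed $k$;
\item the first-order conditions giving \eqref{eq:ortho_def};
\item the case $\varepsilon_k\to\bar\varepsilon\in(0,\infty)$;
\item $b_k\to b_0\neq0$.
\end{itemize}
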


		\begin{proof}
			For each fixed $k$, we consider the minimization problem over the scalar parameter:
			\[
			\widetilde F_k(\varepsilon,\xi)
			:=\min_{C\in\mathbb{R}}\|\nabla(u_k-CV_{\varepsilon,\xi})\|_{2,\Omega}^2
			=\|\nabla u_k\|_{2,\Omega}^2
			-\frac{\bigl(\int_\Omega\nabla u_k\cdot\nabla V_{\varepsilon,\xi}\,\d x\bigr)^2}
			{\|\nabla V_{\varepsilon,\xi}\|_{2,\Omega}^2}.
			\]
			The minimum is achieved at
			\[
			b(\varepsilon,\xi)
			:=\frac{\int_\Omega\nabla u_k\cdot\nabla V_{\varepsilon,\xi}\,\d x}
			{\|\nabla V_{\varepsilon,\xi}\|_{2,\Omega}^2}.
			\]
			Let $\{(\varepsilon_{k,\ell},\xi_{k,\ell})\}_{\ell\ge 1}$ be a minimizing
			sequence for $\widetilde F_k$ and set $b_{k,\ell}:=b(\varepsilon_{k,\ell},\xi_{k,\ell})$.
			By~\eqref{eq:bp2}, for all $\ell$ sufficiently large,
			\begin{equation}\label{eq:key}
				b_{k,\ell}^2\|\nabla V_{\varepsilon_{k,\ell},\xi_{k,\ell}}\|_{2,\Omega}^2
				-2b_{k,\ell}\int_\Omega\nabla u_k\cdot\nabla V_{\varepsilon_{k,\ell},\xi_{k,\ell}}\,\d x
				\;\le\;-\delta,
			\end{equation}
			which implies that $b_{k,\ell}\neq 0$.
			
			For $\xi\in\partial\Omega$, by definition~\eqref{equa2.9},
			$V_{\varepsilon,\xi}=\varepsilon^{-\frac{n-2}{2}}V_1(T^{-1}_\xi(x)/\varepsilon)$, where
			\[
			V_1(z)=C_n\bigl(1+|z'|^2+|z_n+x_n^0|^2\bigr)^{-\frac{n-2}{2}},
			\quad  x_n^0>0.
			\]
			Define
			$a_{k,\ell}:=\frac{|b_{k,\ell}|}{\varepsilon_{k,\ell}^{n/2}}$ and set
			\begin{align*}
				I_{k,\ell}
				&:=\int_\Omega
				\frac{\bigl|T^{-1}_{\xi_{k,\ell}}(x)/\varepsilon_{k,\ell}
					+(0,x_n^0)\bigr|^2}
				{\Bigl(1+\bigl|(T^{-1}_\xi(x))/\varepsilon_{k,\ell}+(0,x_n^0)\bigr|^2
					\Bigr)^n}
				\,\d x,\\
				J_{k,\ell}
				&:=\int_\Omega
				\frac{\nabla u_k(x)\cdot Q_{\xi_{k,\ell}}
					\!\Bigl(T^{-1}_{\xi_{k\ell}}(x)/\varepsilon_{k,\ell}+(0,x_n^0)\Bigr)}
				{\Bigl(1+\bigl|{T^{-1}_{\xi_{k,\ell}(x)}}/{\varepsilon_{k,\ell}}
					+(0,x_n^0)\bigr|^2\Bigr)^{n/2}}
				\,\d x,
			\end{align*}
			where $Q_{\xi_{k,\ell}}=(DT_{\xi_{k,\ell}}^{-1})^{T}$.
			A direct calculation, using
			$|\nabla_x V_{\varepsilon,\xi}|^2=\varepsilon^{-n}\left|\nabla V_1\left({T^{-1}_\xi(x)}/{\varepsilon}\right)\right|^2$,
			rewrites~\eqref{eq:key} as
			\begin{equation}\label{eq:star}
				(n-2)^2C_n^2\,a_{k,\ell}^2\,I_{k,\ell}
				\;\mp\;
				2(n-2)C_n\,a_{k,\ell}\,J_{k,\ell}
				\;\le\;-\delta,
			\end{equation}
			where the sign $\mp$ matches the sign of $b_{k,\ell}$.

			Suppose, for a subsequence, $\varepsilon_{k,\ell}\to+\infty$ as $\ell\to\infty$.
			For every $x\in\Omega$,  we have $\frac{T^{-1}_{\xi_{k,\ell}}(x)}{\varepsilon_{k,\ell}}\to 0$ as $\ell\to\infty$,
			so
			\begin{equation*}
				 I_{k,\ell}\to
			D:=	\frac{|x_n^0|^2}{(1+|x_n^0|^2)^n}|\Omega|>0\quad \text{as }  \ell\to\infty.
			\end{equation*}
			Similarly,
			\begin{equation*}
				J_{k,\ell}\to E_k:=(n-2)C_n\int_\Omega\frac{\nabla u_k\cdot Q_{\xi_\infty}(0,x_n^0)}{(1+|x_n^0|^2)^{n/2}}\,\d x<\infty\quad \text{as }  \ell\to\infty.
			\end{equation*}
			Letting $a_k:=\lim\limits_{\ell\to\infty} a_{k,\ell}$,
			inequality~\eqref{eq:star} gives
			\begin{equation}\label{eq:dagger}
				(n-2)^2C_n^2\,a_k^2\,D \;\mp\; 2(n-2)C_n\,a_k\,E_k\;\le\;-\delta.
			\end{equation}
			Thus $\{a_k\}$ is bounded and $a_k\to a_0\in[0,\infty)$.
			Letting $k\to\infty$ and using $u_k\rightharpoonup 0$
			in $H^1(\Omega)$, we obtain $E_k\to 0$, substituting this into~\eqref{eq:dagger},  we obtain
			\begin{equation*}
				(n-2)^2C_n^2\,a_0^2\,D\le-\delta<0,
			\end{equation*}
			which is impossible. Therefore, $\lim\limits_{\ell\to\infty}\varepsilon_{k,\ell}=\varepsilon_k<\infty$.
			
			Suppose, for a subsequence, $\varepsilon_{k,\ell}\to 0$ as $\ell\to\infty$. Under the change of variables
			\[
			z=\frac{T^{-1}_{\xi_{k,\ell}}(x)}{\varepsilon_{k,\ell}},
			\]
			we obtain
			\[
			I_{k,\ell}
			=\int_{\Omega_{k,\ell}}
			\frac{|z'|^2+|z_n+\varepsilon_{k,\ell}x_n^0|^2}
			{\left(1+|z'|^2+|z_n+\varepsilon_{k,\ell}x_n^0|^2\right)^n}\,\d z,
			\]
			where
			\[
			\Omega_{k,\ell}:=\frac{T^{-1}_{\xi_{k,\ell}}(\Omega)}{\varepsilon_{k,\ell}}.
			\]
			As $\varepsilon_{k,\ell}\to 0$, $\Omega_{k,\ell}\nearrow\mathbb{R}^n_+$
			and $\varepsilon_{k,\ell}x_n^0\to 0$ as $\ell\to\infty$, by monotone convergence theorem we deduce
			\begin{equation*}
				I_{k,\ell}\to M:=\int_{\mathbb{R}^n_+}{|z|^2}/{(1+|z|^2)^n}\,\d z>0\quad \text{as }  \ell\to\infty.
			\end{equation*}
		As $\varepsilon_{k,\ell}\to 0$, we have $V_{\varepsilon_{k,\ell},\xi_{k,\ell}}\rightharpoonup 0$
			weakly in $H^1(\Omega)$ as $\ell\to\infty$,
			\begin{equation*}
				J_{k,\ell}=\int_\Omega\nabla u_k\cdot\nabla V_{\varepsilon_{k,\ell},\xi_{k,\ell}}\,\d x\to 0\quad \text{as }  \ell\to\infty.
			\end{equation*}
			Inequality~\eqref{eq:star} then gives
			$a_k^2 M\le-\delta<0$, which is a contradiction.
			Hence $\varepsilon_{k,\ell}\to\varepsilon_k>0$ as $\ell\to\infty$.
			
			Therefore, for each fixed $k$, the sequence
			$\{\varepsilon_{k,\ell}\}_{\ell\ge1}$ is bounded and bounded away from
			zero. Passing to a subsequence if necessary,
			$(\varepsilon_{k,\ell},\xi_{k,\ell})\to(\varepsilon_k,\xi_k)$ with
			$\varepsilon_k\in(0,\infty)$ and
			$\xi_k\in\partial\Omega$. By the continuity of
			$\widetilde F_k$ with respect to $(\varepsilon,\xi)$,
			\[
			d(u_k,\mathcal M)
			=
			\widetilde F_k(\varepsilon_k,\xi_k),
			\]
			and hence
			\[
			d(u_k,\mathcal M)
			=
			\|\nabla(u_k-b_kV_{\varepsilon_k,\xi_k})\|_{2,\Omega},
			\]
			where $b_k=b(\varepsilon_k,\xi_k)$.
			
		Moreover, since $b_kV_{\varepsilon_k,\xi_k}$ realizes the distance
		from $u_k$ to $\mathcal M$, the remainder
		\[
		\phi_k:=u_k-b_kV_{\varepsilon_k,\xi_k}
		\]
		lies in the normal space of $\mathcal M$ at
		$b_kV_{\varepsilon_k,\xi_k}$. Equivalently,
		\[
		\phi_k\perp T_{1,V_{\varepsilon_k,\xi_k}}(\mathcal M),
		\]
		where the orthogonality is understood in the sense of
		\eqref{eq:ortho_def}.
		
			From~\eqref{eq:bp2}, we deduce
			\begin{equation*}
				b_{k}^2\int_{\Omega}|\nabla V_{\varepsilon_{k},\xi_{k}}|^2\,\d x
				-2b_{k}\int_\Omega\nabla u_k\cdot\nabla V_{\varepsilon_{k},\xi_{k}}\,\d x
				\;\le\;-\delta.
			\end{equation*}
			Since $u_k\rightharpoonup0$ in $H^1(\Omega)$ as $k\to\infty$,
			it follows that $\varepsilon_k\to 0$ and $b_k\to b_0\neq 0$ as $k\to\infty$. 
		\end{proof}

		\begin{Lemma}\label{lem:expansion}
			Let $n\ge5$, $\alpha\ge0$, and let
			$\xi\in\partial\Omega$ satisfy $H(\xi)>0$.
			Then, as $\varepsilon\to0^+$,
			\[
			I_\alpha(V_{\varepsilon,\xi})
			=
			A-L(n)H(\xi)\varepsilon
			+B(n)\alpha\varepsilon
			+O(\varepsilon^2)
			+O(\alpha\varepsilon^2),
			\]
			where
			\[
			L(n)
			=
			\frac{n^{n/2}(n-2)^{(2n-1)/2}\pi^{n/2}}
			{(n-3)2^{(3n-3)/2}(n-1)^{(n-3)/2}\Gamma(\frac n2+1)},
			\]
			and
			\[
			B(n)=
			\frac{(n(n-2))^{(n+1)/2}\pi^{n/2}}
			{2^{\,n+1}(n-1)\Gamma(\frac n2+1)}
			\left[
			B\!\left(\frac12,\frac{n-2}{2}\right)
			-
			B_{\frac{n}{2(n-1)}}
			\!\left(\frac12,\frac{n-2}{2}\right)
			\right].
			\]
			In particular, $L(n)>0$ and $B(n)>0$.
		\end{Lemma}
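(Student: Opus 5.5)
The expansion will be obtained by splitting $I_\alpha(V_{\varepsilon,\xi})$ into the absorption-free part and the absorption term:
\[
I_\alpha(V_{\varepsilon,\xi})=\Psi_\Omega(V_{\varepsilon,\xi})+\frac{\lambda}{2}\int_\Omega V_{\varepsilon,\xi}^2\,\d x+\frac{\alpha}{2^\sharp}\int_\Omega V_{\varepsilon,\xi}^{2^\sharp}\,\d x .
\]
Since $V_1(y)\sim|y|^{2-n}$, we have $\int_\Omega V_{\varepsilon,\xi}^2=\varepsilon^2\int_{\Omega_\varepsilon}V_1^2\,\d y$, where $\Omega_\varepsilon=T_\xi^{-1}(\Omega)/\varepsilon$. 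This is $O(\varepsilon^2)$ precisely because $V_1\in L^2(\mathbb R^n_+)$ for $n\ge5$; this is the only place the dimension restriction enters at this order. Strictly speaking, the bubble is only smooth on $\mathbb R^n$, so it lies in $H^1(\Omega)$ and no cut-off is needed. The tails outside a fixed neighbourhood of $\xi$ contribute $O(\varepsilon^{n-2})=O(\varepsilon^2)$ to every term.

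\textbf{Curvature part.} Next we expand $\Psi_\Omega(V_{\varepsilon,\xi})$ in the local graph coordinates of \eqref{eq:graph_setup} centred at $\xi$, with $x_n=g(x')=\frac12\sum_i h_i x_i^2+O(|x'|^3)$. Write $\Omega$ locally as $\Lambda(\xi)$ minus or plus the thin region between $\{x_n=0\}$ and $\{x_n=g\}$. Expanding each of the three terms to first order in the height $g$ gives
\[
\int_\Omega|\nabla V_{\varepsilon,\xi}|^2=\int_{\mathbb R^n_+}|\nabla V_1|^2-\varepsilon\int_{\mathbb R^{n-1}}\tfrac12\textstyle\sum_i h_iy_i^2\,|\nabla V_1|^2(y',0)\,\d y'+O(\varepsilon^2).
\]
The $L^{2^*}$ term is handled in the same way. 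For the boundary integral, the surface term picks up both the shift $V_1(y',\varepsilon g/\varepsilon^2\cdots)$ and the area element. Here we use $\partial_{y_n}V_1=-V_1^{2^\sharp-1}$ on $\partial\mathbb R^n_+$ and the equation for $V_1$ to combine the contributions. By radial symmetry in $y'$, $\sum_i h_i y_i^2$ averages to $(n-1)H(\xi)|y'|^2/(n-1)=H(\xi)|y'|^2$. This yields $-L(n)H(\xi)\varepsilon$, where $L(n)$ is an explicit radial integral in $y'$ of $|y'|^2$ times a combination of $V_1(y',0)$ and its normal derivative. These integrals reduce to Beta/Gamma functions via $\int_0^\infty r^{a}(1+c^2+r^2)^{-b}\,dr$ with $c=x_n^0=\sqrt{n/(n-2)}$. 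The resulting closed form should then be checked against the stated $L(n)$. This computation follows Adimurthi--Yadava and Adimurthi--Pacella--Yadava, adapted to include the boundary nonlinearity. Convergence of the $|y'|^2|\nabla V_1|^2$ integral requires $2n-2-2>n-1$, i.e.\ $n>3$; for $n\ge5$ the $O(\varepsilon^2)$ remainder (involving $|y'|^4$ weights) converges too.

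\textbf{Absorption part.} The term is
\[
\frac{\alpha}{2^\sharp}\int_\Omega V_{\varepsilon,\xi}^{2^\sharp}\,\d x=\frac{\alpha}{2^\sharp}\varepsilon\int_{\Omega_\varepsilon}V_1^{2^\sharp}\,\d y=\frac{\alpha\varepsilon}{2^\sharp}\int_{\mathbb R^n_+}V_1^{2^\sharp}\,\d y+O(\alpha\varepsilon^2),
\]
using $V_1^{2^\sharp}\sim|y|^{-2(n-1)}$, which is integrable at infinity, and the boundary perturbation for the error. Hence $B(n)=\frac{1}{2^\sharp}\int_{\mathbb R^n_+}V_1^{2^\sharp}\,\d y$. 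Integrating first over $y'$ in polar coordinates gives a factor proportional to $(1+(t+x_n^0)^2)^{-(n-1)/2\cdot\ldots}$, leaving a one-dimensional integral over $s=t+x_n^0\in(x_n^0,\infty)$ of $(1+s^2)^{-(n-1)/2}$ up to constants. The substitution $w=1/(1+s^2)$ turns this into $B(\frac12,\frac{n-2}{2})$ minus the incomplete Beta at $w=1/(1+(x_n^0)^2)=\frac{n-2}{2(n-1)}$; by the symmetry $B_x(a,b)$ versus $B_{1-x}(b,a)$ this matches the stated $B_{\frac{n}{2(n-1)}}$. Positivity of $B(n)$ is immediate since the difference is $\int$ over a nonempty interval of a positive integrand, and $L(n)>0$ is visible from the formula. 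Finally, $\Psi_{\mathbb R^n_+}(V_1)=A$ by Lemma~\ref{lem2.1} supplies the leading term.

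\textbf{Main obstacle.} The main difficulty is the curvature coefficient: correctly accounting for the three first-order contributions (volume shift in the gradient term, volume shift in the $L^{2^*}$ term, and boundary shift plus area element in the trace term). Since the bubble does not solve the equation on $\Omega$, these do not collapse by a Pohozaev identity in an obvious way. I would first integrate by parts, using $-\Delta V_1=V_1^{2^*-1}$, to reduce everything to boundary integrals over $\{y_n=0\}$ weighted by $|y'|^2$, and only then evaluate the radial integrals to match $L(n)$.
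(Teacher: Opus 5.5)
Your plan follows the paper's proof step for step. You show $\frac{\lambda}{2}\int_\Omega V_{\varepsilon,\xi}^2=O(\varepsilon^2)$ using $V_1\in L^2(\mathbb R^n_+)$ for $n\ge5$. You expand the three terms of $\Psi_\Omega(V_{\varepsilon,\xi})$ to first order across the layer $\{0<x_n<g(x')\}$ and average $\sum_i h_iy_i^2$ to $H(\xi)|y'|^2$. You reduce $B(n)=\frac{1}{2^\sharp}\int_{\mathbb R^n_+}V_1^{2^\sharp}$ to $I_n=\int_{x_n^0}^\infty(1+s^2)^{-(n-1)/2}\,ds$.

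Your shortcut $\partial_{y_n}(V_1^{2^\sharp})=-2^\sharp V_1^{2^*}$ on $\{y_n=0\}$ correctly gives the boundary-shift contribution $-\frac{\varepsilon}{2}H(\xi)\int_{\mathbb R^{n-1}}|y'|^2V_1^{2^*}(y',0)\,dy'$, which is the paper's $-\frac{n-2}{2}x_n^0C_n^{2^\sharp}\beta_n^{(2)}H(\xi)\varepsilon$. The extra integration by parts you propose is unnecessary, because the layer expansion already produces integrals over $\{y_n=0\}$. Also, $V_{\varepsilon,\xi}$ does solve $-\Delta V=V^{2^*-1}$ in $\Omega$; only the boundary condition fails. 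In the $B(n)$ step, the substitution $w=(1+s^2)^{-1}$ gives directly $I_n=\frac12 B_{w_0}(\frac{n-2}{2},\frac12)$ with $w_0=\frac{n-2}{2(n-1)}$. Then $B_x(a,b)=B(a,b)-B_{1-x}(b,a)$ yields the stated bracket.

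What is missing is the computation that carries the content of the lemma. You defer the closed form of the curvature coefficient (``should then be checked'') and then read $L(n)>0$ off the target formula, which is circular. The coefficient your expansion produces is a difference of positive integrals,
\[
L(n)=\frac12\int_{\mathbb R^{n-1}}|y'|^2\Bigl(\frac12|\nabla V_1|^2-\frac{3n-2}{2n}V_1^{2^*}\Bigr)(y',0)\,dy'=\frac{(n(n-2))^{n/2}}{4n}\bigl[(n-2)\beta_n^{(4)}-2(n-1)\beta_n^{(2)}\bigr],
\]
so its sign is not visible until it is evaluated.

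It closes quickly. With $c^2=1+(x_n^0)^2=\frac{2(n-1)}{n-2}$, the Beta identity gives $\beta_n^{(4)}=\frac{n+1}{n-3}\,c^2\,\beta_n^{(2)}$. Hence the bracket equals $\frac{8(n-1)}{n-3}\beta_n^{(2)}>0$. Then $\beta_n^{(2)}=\pi^{(n-1)/2}c^{1-n}\Gamma(\frac{n+1}{2})/\Gamma(n)$ and the Legendre duplication formula give exactly the stated $L(n)$. Likewise, the prefactor of $B(n)$, which you leave ``up to constants'', follows from duplication applied to $\Gamma(n-1)$. With these two evaluations written out, your argument becomes a complete proof along the same lines as the paper.
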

		\begin{proof}
			We expand $I_{\alpha}(V_{\varepsilon,\xi})$ according to its
			definition~\eqref{eq1.3}:
			\[
			\begin{aligned}
				I_{\alpha}(V_{\va,\xi})
				&= \underbrace{\frac{1}{2} \int_{\Omega} |\nabla V_{\va,\xi}|^2\,\d x}_{(E_1)}
				+ \underbrace{\frac{\lambda}{2} \int_{\Omega} V_{\va,\xi}^2\,\d x
					+\frac{\alpha}{2^\sharp}\int_{\Omega}V_{\va,\xi}^{2^\sharp}\,\d x}_{(E_2)}\\
				&\quad
				- \underbrace{\frac{1}{2^*} \int_{\Omega} V_{\va,\xi}^{2^*}\,\d x}_{(E_3)}
				- \underbrace{\frac{1}{2^\sharp} \int_{\partial\Omega} V_{\va,\xi}^{2^\sharp}\,\d\sigma}_{(E_4)}.
			\end{aligned}
			\]
			Without loss of generality, assume $\xi=0$. By~\eqref{eq:graph_setup},
			$\partial\Omega$ near the origin is the graph of $x_{n}=g(x')$ with
			$g(0)=0$, $\nabla g(0)=0$, and
			\begin{equation}\label{eq:phi_taylor}
				g(x')\;=\;\frac{1}{2}\sum_{i=1}^{n-1}h_{i}\,x_{i}^{2}
				\;+\;O(|x'|^{3}),
			\end{equation}
			where $h_{1},\ldots,h_{n-1}$ are the principal curvatures of $\partial\Omega$ at $0$.
			Recall
			\begin{equation}\label{eq:H_curvature}
				\sum_{i=1}^{n-1}h_{i}\;=\;(n-1)\,H(0).
			\end{equation}
			Set
			\begin{equation}\label{eq:Sigma_def}
				B(R)^{+}\;:=\;B(R)\cap\R_+^n,
				\qquad
				\Sigma\;:=\;\bigl\{(x',x_{n})\in B(R)\;:\;0<x_{n}<g(x')\bigr\}.
			\end{equation}
			By definition~\eqref{equa2.9},
			$V_\varepsilon:=V_{\va,0}(x)={\varepsilon^{-(n-2)/2}}V_1({x}/{\varepsilon})$
			with $V_{1}(y)=C_{n}(1+|y'|^{2}+(y_{n}+x_{n}^{0})^{2})^{-(n-2)/2}$,
			$x_{n}^{0}=\sqrt{n/(n-2)}$. We also introduce
			\begin{equation}\label{eq:beta_def}
				\beta_{n}^{(p)}\;:=\;
				\int_{\R^{n-1}}\!\!\frac{|y'|^{p}\,\d y'}
				{(1+|y'|^{2}+(x_{n}^{0})^{2})^{n}},
				\qquad p\in\{2,4\},
			\end{equation}
			which are finite for $n\ge 5$.
				
Let us address $(E_3)$ first.
			\[
			\int_{\Omega} V_{\va}^{2^*}\,\d x
			= \int_{B(R)^+}V_{\va}^{2^*}\,\d x
			- \int_{\Sigma} V_{\va}^{2^*}\,\d x
			+\int_{B(R)^c\cap \Omega}V_{\va}^{2^*}\,\d x.
			\]
			For the  integral over $B(R)^c\cap\Omega$, we have
		\begin{equation*}
				\int_{B(R)^c\cap\Omega}V_{\va}^{2^*}\,\d x\le\int_{B(R)^c}V_{\va}^{2^*}\,\d x=O(\varepsilon^n).
		\end{equation*}
			For the integral
			over $B(R)^+$, the rescaling $y=\frac{x}{\varepsilon}$ gives
			\begin{equation*}
				\int_{B(R)^+} V_{\va}^{2^*}\,\d x
			= \int_{\mathbb{R}_+^n} V_1(y)^{2^*}\,\d y + O(\va^n).
			\end{equation*}
			For the integral over $\Sigma$, the same rescaling and the Taylor
			expansion $g(\varepsilon y')/\varepsilon=(\varepsilon/2)\sum_i h_i y_i^2+O(\varepsilon^2|y'|^3)$ yield
			\[
			\begin{aligned}
				\int_{\Sigma} V_{\va}^{2^*}\,\d x
				&= C_n^{2^*} \int_{B'(\rho)}
				\left( \int_{0}^{g(x')}
				\frac{\va^n}{\left(\va^2|x'|^2+(x_n+\va x_n^0)^2\right)^n}
				\,\d x_n \right)\d x' \\
				&= C_n^{2^*} \int_{B'(R/\va)}
				\left( \int_{0}^{g(\va y')/\va}
				\frac{1}{\left(1+|y'|^2+(y_n+x_n^0)^2\right)^n}
				\,\d y_n \right)\d y'\\
				&=C_n^{2^*}\int_{B'(R/\va)}
				\frac{\frac{1}{2}\va\sum_{i} h_i y_i^2}
				{\left(1+|y'|^2+(x_n^0)^2\right)^n}
				\,\d y'+ O(\va^2)\\
				&= \va C_n^{2^*} \tfrac{H(0)}{2}\,\beta_n^{(2)} + O(\va^2).
			\end{aligned}
			\]
		Finally, combining these three contributions, we obtain that
			\begin{equation}\label{eq:E3_expansion}
				(E_3) = \frac{1}{2^*} \int_{\mathbb{R}_+^n} V_1^{2^*}\,\d x
				-\frac{n-2}{4n}\,C_n^{2^*}\beta_n^{(2)}\,H(0)\,\va + O(\va^2).
			\end{equation}
			
	As for $(E_4)$, we have
	\[
	\begin{aligned}
		\int_{\partial\Omega} V_{\va}^{2^\sharp}\d\sigma &= \int_{B(R) \cap \partial\Omega} V_{\va}^{2^\sharp} \d\sigma+ \int_{B(R)^c \cap \partial\Omega} V_{\va}^{2^\sharp}\d\sigma \\
		&= \int_{B'(R)} \frac{C_n^{2^\sharp} \va^{n-1}\sqrt{1+|\nabla g(x')|^2}}{\left(\va^2+ |x'|^2 + (g(x') + \va x_n^0)^2  \right)^{n-1}} \d x'+O(\va^{n-1}) \\
		&=C_n^{2^\sharp} \int_{B'(R)}  \frac{ \va^{n-1}(1+\frac{1}{2}|\nabla g(x')|^2+O(\nabla g|^4)|)}{\left(\va^2+ |x'|^2 + (g(x') + \va x_n^0)^2  \right)^{n-1}} \d x'+O(\va^{n-1})\\
		&=C_n^{2^\sharp} \int_{B'(\frac{R}{\va})} \frac{1+O(\va^2|y'|^2)}{\left(1+ y'|^2 + \left( \frac{g(\va y')}{\va} + x_n^0 \right)^2 \right)^{n-1}} \d y'+O(\va^{n-1})\\
		&= \int_{B'(\frac{R}{\va})} \frac{C_n^{2^\sharp}}{\left(1+ |y'|^2 + (x_n^0)^2  \right)^{n-1}} \d y'\\
		&\quad -\va(n-1)x_n^0C_n^{2^\sharp} \int_{B'(\frac{\rho}{\va})} \frac{\sum h_i y_i^2}{\left( 1+|y'|^2 + (x_n^0)^2  \right)^n} \d y'  + O\left( \va^2 \right) \\
		&= \int_{\partial\mathbb{R}_+^n} V_1(y', 0)^{2^\sharp} \d y'- \va(n-1)x_n^0C_n^{2^\sharp} \beta_n^{(2)} H(0) + O\left( \va^2 \right).
	\end{aligned}
	\]
			Hence,
			\begin{equation}\label{eq:E4_expansion}
				(E_4) = \frac{1}{2^\sharp} \int_{\partial\mathbb{R}_+^n} V_1^{2^\sharp}\,\d x'
				- \frac{n-2}{2}\,x_n^0\,C_n^{2^\sharp}\,\beta_n^{(2)}\,H(0)\,\va + O(\va^2).
			\end{equation}

			Now we are going to	estimate $(E_1)$.  A straightforward calculation gives
			 \[
			 \int_{\Omega} |\nabla V_{\va}|^2 \d x= \int_{\mathbb{R}_+^n} |\nabla V_1|^2 \d x- \int_{\Sigma} |\nabla V_{\va}|^2 \d x+ O\left( \va^2 \right).
			 \]
			 By the same reasoning as in the estimate of $(E_3)$, 	we have  
			 \[
			 \begin{aligned}
			 	\int_{\Sigma} |\nabla V_{\va}|^2 \d x&= C_n^2 (n-2)^2 \int_{B'(\frac{R}{\va})} \int_{0}^{\frac{g(\va y')}{\va}} \frac{|y'|^2 + (y_n + x_n^0)^2}{\left(1+ |y'|^2 + (y_n + x_n^0)^2 \right)^n} \d y_n \d y' \\
			 	&= C_n^2 (n-2)^2 \va \int_{B'(\frac{R}{\va})} \frac{(|y'|^2 + (x_n^0)^2) \frac{1}{2}\sum_{i} h_i y_i^2}{\left(1+ |y'|^2 +(x_n^0)^2  \right)^n} \d y' + O(\va^2) \\
			 	&= \frac{C_n^2 (n-2)^2}{2} \va H(0) \int_{B'(\frac{R}{\va})} \frac{(|y'|^2 + (x_n^0)^2) |y'|^2}{\left(1+ |y'|^2 + (x_n^0)^2  \right)^n} \d y' + O(\va^2) \\
			 	&= \frac{C_n^2 (n-2)^2}{2} \va H(0) \left( (x_n^0)^2 \beta_n^{(2)} + \int_{B'(\frac{R}{\varepsilon})} \frac{|y'|^4}{\left(1+ |y'|^2 + (x_n^0)^2  \right)^n} \d y' \right) + O(\va^2) \\
			 	&=\frac{1}{2} C_n^2 (n-2)^2 H(0) \left( (x_n^0)^2 \beta_n^{(2)} + \beta_n^{(4)} \right) \va + O(\va^2).
			 \end{aligned}
			 \]
			 Hence, for $n\ge5$, we deduce that
			 \begin{equation}\label{eq:E1_expansion}
			 	(E_1) = \int_{\mathbb{R}_+^n} |\nabla V_1|^2\d x - \frac{(n-2)^2}{4} C_n^2 H(0) \left( (x_n^0)^2 \beta_n^{(2)}+ \beta_n^{(4)} \right) \va + O(\va^2).
			 \end{equation}

			Finally, we  estimate of $(E_2)$. A direct computation gives
			 \begin{equation*}
			 	\begin{aligned}
			 		\int_{\Omega} V_{\va}^2\d x &=\int_{B(R)^+} V_{\va}^2\d x-\va^2 \int_{B'(\frac{R}{\va})} \int_{0}^{\frac{g(\va y')}{\va}} \frac{C_n^2}{\left(1+ |y'|^2 + (y_n + x_n^0)^2  \right)^{n-2}} \d y_n \d y'+O(\va^2)\\
			 		&= \va^2\int_{\R_+^n}V_1^{2}\d x+O(\va^2),\\
			 	\end{aligned}
			 \end{equation*}
			 so that
			 \begin{equation*}
			 	\frac{\lambda}{2}	\int_{\Omega} V_{\va}^2\d x=\frac{\lambda\va^2}{2} \int_{\R_+^n}V_1^{2}\d x+O(\va^2).
			 \end{equation*}
			 Similarly,
			 \begin{equation*}
			 	\begin{aligned}
			 		\frac{\alpha}{2^\sharp}\int_{\Omega}  V_{\va}^{2^\sharp}\d x&= \frac{\alpha\va}{2^\sharp} \int_{B(\frac{R}{\va})^+} \frac{C_n^{2^\sharp}  }{\left(1+ |y'|^2 + \left( \frac{\varphi(\va y')}{\va} + x_n^0 \right)^2 \right)^{n-1}} \d y\\
			 		&\quad-\frac{\alpha}{2^\sharp}\frac{C_n^{2^\sharp}}{2}H(0)\va^2\int_{B'(\frac{R}{\va})} \frac{|y'|^2 }{\left(1+ |\bar{y}|^2 + |x_n^0|^2 \right)^{n-1}} \d y'+O(\alpha\va^2)\\
			 		&= \frac{1}{2^\sharp}\alpha\va \int_{\R_+^n}V_1^{2^\sharp}\d x+o(\alpha\va).
			 	\end{aligned}
			 \end{equation*}
			 Thus,
			 \begin{equation}\label{eq:E2_expansion}
			 	(E_2)=	\frac{\alpha}{2^\sharp}\int_{\Omega}  V_{\va}^{2^\sharp} \d x+\frac{\lambda}{2}	\int_{\Omega} V_{\va}^2\d x
			 	= \frac{\alpha\va}{2^\sharp}\int_{\R_+^n}V_1^{2^\sharp}\d x+O(\va^2)+o(\alpha\va).
			 \end{equation}
			Combining \eqref{eq:E1_expansion}--\eqref{eq:E4_expansion} and using
			 \[
			 \frac{1}{2}\int_{\R^{n}_{+}}|\nabla V_1|^2\d x-\frac{1}{2^*}\int_{\R^{n}_{+}}V_1^{2^*}\d x-\frac{1}{2^\sharp}\int_{\partial\R^{n}_{+}}V_1^{2^\sharp}\d x' = A,
			 \]
			 we obtain
			 \begin{equation}
			 	\label{eq:I_alpha_assembled}
			 	\begin{aligned}
			 		I_{\alpha}(V_{\varepsilon})
			 		&= A + B(n)\alpha\va - L(n)H(0)\va
			 		+O(\va^{2})+o(\alpha\va),
			 	\end{aligned}
			 \end{equation}
			 where
			 \begin{align}
			 	\label{eq:Ln_explicit}
			 	L(n)
			 	&:=\frac{(n-2)^{2}}{4}\,C_{n}^{2}\bigl((x_{n}^{0})^{2}\beta_{n}^{(2)}+\beta_{n}^{(4)}\bigr)
			 	-\frac{n-2}{4n}\,C_{n}^{2^*}\,\beta_{n}^{(2)}
			 	-\frac{n-2}{2}\,x_{n}^{0}\,C_{n}^{2^\sharp}\,\beta_{n}^{(2)},\\
			 	\label{eq:Bn_explicit}
			 	B(n)
			 	&:=\frac{1}{2^\sharp}\!\int_{\R_+^n}\!V_{1}^{2^\sharp}\d y= \frac{n-2}{2(n-1)}
			 	\int_{\R_+^n}
			 	\frac{C_n^{2^\sharp}}
			 	{\bigl(1+|y'|^2+(y_n+x_n^0)^2\bigr)^{n-1}}\,dy.
			 \end{align}
			 Substituting $C_{n}=(n(n-2))^{\frac{n-2}{4}}$ and $(x_{n}^{0})^{2}=\frac{n}{n-2}$ into  \eqref{eq:Ln_explicit},
			 we have
			 \begin{equation*}
			 	L(n)=\frac{\left(n(n-2)\right)^{\frac{n}{2}}}{4n}\big((n-2)\beta_n^{(4)}-2(n-1)\beta_n^{(2)}\big).
			 \end{equation*}
			 We 	now use the standard polar coordinate identity
			 \begin{equation}
			 	\label{eq:beta_polar}
			 	\beta_{n}^{(p)}
			 	\;=\;\omega_{n-2}\!\int_{0}^{\infty}\!\!\frac{r^{n-2+p}\d r}
			 	{(1+(x_{n}^{0})^{2}+r^{2})^{n}},
			 	\qquad p\in\{2,4\},
			 \end{equation}
			 where $\omega_{n-2}=\frac{2\pi^{(n-1)/2}}{\Gamma((n-1)/2)}$ is the
			 $(n-2)$-sphere measure.  By the Beta-function identity
			 \begin{equation}\label{eq:Beta}
			 	\int_{0}^{\infty}r^{s-1}(c^{2}+r^{2})^{-t}\d r
			 	=\tfrac{1}{2}c^{s-2t}B(\frac{s}{2},t-\frac{s}{2})
			 \end{equation}with
			 $c^{2}=1+(x_{n}^{0})^{2}=\tfrac{2(n-1)}{n-2}$, we have
			 \begin{equation*}
			 	\begin{aligned}
			 		 (n-2)\beta_n^{(4)}-2(n-1)\beta_n^{(2)}&=(n-2)\pi^{\frac{n-1}{2}}\left(\frac{n-2}{2(n-1)}\right)^{\frac{n-3}{2}}\left(\frac{n+1}{n-3}\right)\frac{\Gamma(\frac{n+1}{2})}{\Gamma(n)}\\
			 		&\quad-2(n-1)\pi^{\frac{n-1}{2}}\left(\frac{n-2}{2(n-1)}\right)^{\frac{n-1}{2}}\frac{\Gamma(\frac{n+1}{2})}{\Gamma(n)}\\
			 		 &=2\pi^{\frac{n-1}{2}}\left(\frac{n-2}{2(n-1)}\right)^{\frac{n-1}{2}}\frac{\Gamma(\frac{n+1}{2})}{\Gamma(n)}\frac{4(n-1)}{n-3}>0.
			 	\end{aligned}
			 \end{equation*}
			 Applying the Legendre duplication formula
			 $\Gamma(n)=\frac{2^{n-1}}{\sqrt{\pi}}\Gamma(\frac{n}{2})\Gamma(\frac{n+1}{2})$
			 together with $\Gamma(\frac{n}{2})=\frac{2}{n}\Gamma(\frac{n}{2}+1)$, so that
			 $\frac{\Gamma(\frac{n+1}{2})}{\Gamma(n)}=\frac{n\sqrt{\pi}}{2^{n}\Gamma(\frac{n}{2}+1)}$,
			 we obtain
			 \begin{equation}\label{eq:Ln_final}
			 	L(n)=\frac{n^{n/2}\,(n-2)^{(2n-1)/2}\,\pi^{n/2}}
			 	{(n-3)\,2^{(3n-3)/2}\,(n-1)^{(n-3)/2}\,\Gamma(\frac{n}{2}+1)}>0.
			 \end{equation}
             It remains to evaluate $B(n)$. By \eqref{eq:Bn_explicit} and
			 $C_n^{2^\sharp}=(n(n-2))^{\frac{n-1}{2}}$, we have
			 \begin{equation}\label{eq:Bn_start}
			 	B(n)=\frac{n-2}{2(n-1)}\,(n(n-2))^{\frac{n-1}{2}}
			 	\int_{\R^n_+}\frac{\d y}{\bigl(1+|y'|^2+(y_n+x_n^0)^2\bigr)^{n-1}} .
			 \end{equation}
			 We integrate first in the tangential variable $y'\in\R^{n-1}$ and then in
			 the normal variable $y_n>0$. For fixed $y_n$, put $s=y_n+x_n^0$, by the
			 Beta-function identity \eqref{eq:Beta}
			 with $t=n-1$ and $c^2=1+s^2$ gives
			 \begin{equation}\label{eq:Bn_tangential}
			 	\int_{\R^{n-1}}\frac{\d y'}{\bigl(1+s^2+|y'|^2\bigr)^{n-1}}
			 	=\pi^{(n-1)/2}\frac{\Gamma(\frac{n-1}{2})}{\Gamma(n-1)}\,
			 	(1+s^2)^{-(n-1)/2}.
			 \end{equation}
			 Integrating \eqref{eq:Bn_tangential} over $y_n\in(0,\infty)$ and setting
			 \begin{equation}\label{eq:In_def}
			 	I_n:=\int_{x_n^0}^{\infty}\frac{\d s}{(1+s^2)^{(n-1)/2}}
			 	=\int_{\theta_n}^{\pi/2}\cos^{\,n-3}\!\phi\,\d\phi,
			 	\qquad \cos^2\theta_n=\frac{n-2}{2(n-1)}
			 \end{equation}
			 the second equality follows from $s=\tan\phi$, we obtain
			 \begin{equation}\label{eq:Bn_radial}
			 	\int_{\R^n_+}\frac{\d y}{\bigl(1+|y'|^2+(y_n+x_n^0)^2\bigr)^{n-1}}
			 	=\pi^{(n-1)/2}\frac{\Gamma(\frac{n-1}{2})}{\Gamma(n-1)}\,I_n .
			 \end{equation}
			 Substituting \eqref{eq:Bn_radial} into \eqref{eq:Bn_start} and applying the
			 Legendre duplication formula
			 $\Gamma(n-1)=\frac{2^{n-2}}{\sqrt{\pi}}\Gamma(\frac{n-1}{2})\Gamma(\frac{n}{2})$
			 together with $\Gamma(\frac{n}{2})=\frac{2}{n}\Gamma(\frac{n}{2}+1)$, we obtain
			 \[
			 \frac{\pi^{(n-1)/2}\Gamma(\frac{n-1}{2})}{\Gamma(n-1)}
			 =
			 \frac{n\pi^{n/2}}
			 {2^{\,n-1}\Gamma(\frac n2+1)}.
			 \]
			 Hence
			 \begin{equation}\label{eq:Bn_final}
			 	B(n)
			 	=
			 	\frac{(n(n-2))^{(n+1)/2}\pi^{n/2}}
			 	{2^{\,n}(n-1)\Gamma(\frac n2+1)}
			 	\,I_n,
			 \end{equation}
			 where
			 \[
			 I_n
			 =
			 \int_{\sqrt{n/(n-2)}}^\infty
			 \frac{ds}{(1+s^2)^{(n-1)/2}}
			 =
			 \frac12
			 \left[
			 B\!\left(\frac12,\frac{n-2}{2}\right)
			 -
			 B_{\frac{n}{2(n-1)}}
			 \!\left(\frac12,\frac{n-2}{2}\right)
			 \right].
			 \]
			  Indeed, the identity for \(I_n\) follows from the change of variables
			 \(t=(1+s^2)^{-1}\) and the relation
			 \[
			 B_x(a,b)
			 =
			 B(a,b)-B_{1-x}(b,a).
			 \]
			  Since \(0<\theta_n<\frac{\pi}{2}\), the integrand in
			 \eqref{eq:In_def} is strictly positive, so \(I_n>0\).
			 Consequently, all factors in \eqref{eq:Bn_final} are positive for
			 \(n\ge5\), and therefore
			$B(n)>0$.
			 \end{proof}
		
		\begin{Lemma}[{\cite[Lemma~3.5]{APY}}]\label{lem:algebraic}
			Let $q>1$ and $L$ be a nonneg\-ative integer with $L\leq q$.
			Let $V$ and $\omega$ be measurable functions on $\Omega$ with
			$V\geq 0$, $V+\omega\geq 0$. Then
			\begin{align*}
				\int_{\Omega} (V+\omega)^q
				&= \sum_{i=0}^{L} \frac{q(q-1)\cdots(q-i+1)}{i!}
				\int_{\Omega} V^{q-i} \omega^i
				+ O\!\left( \int_{\Omega} V^{q-r} |\omega|^r + |\omega|^q \right),  \\
				\int_{\Omega} (V+\omega)^q \omega
				&= \sum_{i=0}^{L} \frac{q(q-1)\cdots(q-i+1)}{i!}
				\int_{\Omega} V^{q-i} \omega^{i+1}
				+ O\!\left( \int_{\Omega} V^{q-r} |\omega|^{r+1} + |\omega|^{q+1} \right),
			\end{align*}
			where $r = \min(L+1, q)$.
		\end{Lemma}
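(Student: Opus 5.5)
The identity I would prove is the pointwise expansion integrated over $\Omega$. Fix $x$ and write $a=V(x)\ge0$, $b=\omega(x)$, with $a+b\ge0$. I would establish
\[
\Bigl|(a+b)^q-\sum_{i=0}^{L}\binom{q}{i}a^{q-i}b^i\Bigr|\le C(q,L)\bigl(a^{q-r}|b|^r+|b|^q\bigr),
\]
where $\binom{q}{i}=q(q-1)\cdots(q-i+1)/i!$. Then I would multiply by $b$ for the second formula and integrate. The second formula follows from the first pointwise bound applied to the same $a,b$, because multiplying both sides by $b$ gives exactly $a^{q-r}|b|^{r+1}+|b|^{q+1}$ on the right. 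So everything reduces to a one-variable inequality that is homogeneous of degree $q$.

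By homogeneity, I would split into two regimes. When $a=0$, the left side equals $|b|^q$ if $b\ge0$; note $b<0$ is impossible since $a+b\ge0$. The terms with $i<q$ vanish, and if $L=q$ is an integer the term $i=q$ cancels exactly. Otherwise assume $a>0$ and set $t=b/a\ge-1$, so the inequality becomes
\[
|f(t)|\le C(|t|^r+|t|^q),\qquad f(t):=(1+t)^q-\sum_{i=0}^L\binom{q}{i}t^i .
\]
Near $t=0$, say $|t|\le1/2$, $f$ is smooth and Taylor's theorem with remainder of order $L+1$ gives $|f(t)|\le C|t|^{L+1}$. Since $r=\min(L+1,q)\le L+1$ and $|t|\le1$, this gives $C|t|^r$. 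For $L+1>q$ we need $|t|^{L+1}\le|t|^q$, which holds for $|t|\le1$. For $-1\le t\le-1/2$, $f$ is bounded and $|t|^r\ge 2^{-r}$, so the bound holds trivially. For $t\ge1/2$, $|f(t)|\le (1+t)^q+C\sum_{i\le L}t^i$. Since $L\le q$ and $t\ge1/2$, each term is $\le C t^q$, which gives the bound $C|t|^q$.

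Multiplying back by $a^q$ turns $|t|^r a^q$ into $a^{q-r}|b|^r$ and $|t|^qa^q$ into $|b|^q$. The only mildly delicate point, which I expect to be the main obstacle, is the exponent bookkeeping when $q$ is not an integer. There $L\le q$ forces $L+1$ either $\le q$, in which case $r=L+1$, or $>q$, in which case $r=q$ and the remainder $a^{0}|b|^q$ is absorbed. I would also have to be careful that $a^{q-i}$ with $q-i\ge0$ stays well defined at $a=0$. Integrability of the terms is assumed implicitly, as in \cite{APY}, whenever the right-hand sides are finite.
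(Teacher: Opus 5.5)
Your proof is correct. The homogeneous reduction to $f(t)=(1+t)^q-\sum_{i=0}^{L}\frac{q(q-1)\cdots(q-i+1)}{i!}t^i$ on $t\ge -1$ handles three regimes: $|t|\le\frac12$ by the Taylor remainder of order $L+1$, $-1\le t\le-\frac12$ by boundedness, and $t\ge\frac12$ by growth like $t^q$ (this uses $L\le q$). Together with the separate case $a=0$, this gives the pointwise bound $C(a^{q-r}|b|^r+|b|^q)$, and multiplying by $b$ and integrating yields both expansions. The paper gives no proof of its own and simply quotes \cite[Lemma~3.5]{APY}, so there is no argument to compare against; yours is a complete, self-contained proof.
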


		\begin{Lemma}\label{lem:final_estimate}
			Assume that $n\ge 5$, $\alpha_{k}>0$, $\delta_{k}>0$, $P_{k}\in\partial\Omega$,
			$u_{k}\in H^{1}(\Omega)$ satisfy
$\alpha_{k}\to+\infty$, $\delta_{k}\to 0$, $P_{k}\to P_{0}$,
			$u_{k}\rightharpoonup 0$ in $H^{1}(\Omega)$, as $k\to\infty$, and
			\begin{equation}\label{eq:final_hyp}
				\lim_{k\to\infty}\|\nabla u_{k}-\nabla V_{\delta_{k},P_{k}}\|_{L^{2}(\Omega)}^{2}=0.
			\end{equation}
			Then there exist $\varepsilon_{k}>0$ and $\xi_{k}\in\partial\Omega$ such that,
			along a subsequence,
			$\varepsilon_{k}/\delta_{k}\to 1$, $\xi_{k}\to P_{0}$, as $k\to\infty$, and
			\begin{equation}\label{eq:decomp_u}
				u_k = b_kV_{\varepsilon_k,\xi_k}+\phi_{k}
			\end{equation}
			with $b_k\to 1$ and $\|\nabla\phi_k\|_{2,\Omega}\to 0$. Moreover,
			\begin{equation}\label{eq:final_estimate}
				I_{\alpha_{k}}(u_{k})
				\ge A-L(n)H(\xi_{k})\varepsilon_{k}+B(n)\alpha_{k}\varepsilon_{k}
				+O(\varepsilon_{k}^{2})
				+O(\alpha_{k}\varepsilon_{k}\|\phi_{k}\|)+O(\|\phi_{k}\|^{2}).
			\end{equation}
		\end{Lemma}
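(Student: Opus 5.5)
The argument has two parts. First we build the decomposition \eqref{eq:decomp_u} from Lemma~\ref{lem:best_projection}. Then we expand $I_{\alpha_k}$ around the bubble $b_kV_{\varepsilon_k,\xi_k}$ to second order, control the quadratic part with Lemma~\ref{lem:spectral_gap}, and read off the leading terms from Lemma~\ref{lem:expansion}.

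\emph{Step 1: the decomposition.} By \eqref{eq:final_hyp},
\[
d(u_k,\mathcal M)\le\|\nabla(u_k-V_{\delta_k,P_k})\|_{2,\Omega}^2\to0,
\qquad
\|\nabla u_k\|_{2,\Omega}^2\to\|\nabla V_1\|_{2,\R^n_+}^2>0 .
\]
Hence \eqref{eq:bp2} holds for some $\delta>0$. Lemma~\ref{lem:best_projection} then gives a minimizer $b_kV_{\varepsilon_k,\xi_k}$ with $\phi_k\perp T_{1,V_{\varepsilon_k,\xi_k}}(\mathcal M)$, and $\|\nabla\phi_k\|_{2,\Omega}^2=d(u_k,\mathcal M)\to0$. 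Consequently
\[
\|\nabla(b_kV_{\varepsilon_k,\xi_k}-V_{\delta_k,P_k})\|_{2,\Omega}\to0 .
\]
Two bubbles are $D^{1,2}$-close exactly when their parameters are asymptotically equal. This is seen by computing the interaction $\int\nabla V_{\varepsilon,\xi}\cdot\nabla V_{\delta,P}$, which tends to $\|\nabla V_1\|^2$ only if $\varepsilon/\delta\to1$ and $|\xi-P|/\delta\to0$. It yields $b_k\to1$, $\varepsilon_k/\delta_k\to1$ and $\xi_k\to P_0$. Since $\phi_k=u_k-b_kV_{\varepsilon_k,\xi_k}$ and both terms are controlled, $\|\phi_k\|$ (the full $H^1$ norm) also tends to zero; the $L^2$ part is handled because $u_k\to0$ in $L^2$ and $\|V_{\varepsilon_k,\xi_k}\|_2=O(\varepsilon_k)$.

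\emph{Step 2: second-order expansion.} Write $V=V_{\varepsilon_k,\xi_k}$ and $u_k=b_kV+\phi_k$. Apply Lemma~\ref{lem:algebraic} with $L=2$ to the three power terms $\int(b_kV+\phi_k)^{2^*}$, $\int_{\partial\Omega}(b_kV+\phi_k)^{2^\sharp}$ and $\int(b_kV+\phi_k)^{2^\sharp}$. This gives
\[
I_{\alpha_k}(u_k)=I_{\alpha_k}(b_kV)+\langle I'_{\alpha_k}(b_kV),\phi_k\rangle+\tfrac12 Q(\phi_k)+o(\|\phi_k\|^2),
\]
where $Q$ is $Q_{\varepsilon_k}$ of \eqref{eq:Q_eps} with $a_{\varepsilon_k}$ a multiple of $\alpha_k$, weighted by powers of $b_k\to1$, plus the harmless $\lambda\int\phi_k^2$. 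For $n\ge5$ the exponents satisfy $2^\sharp<3$, so in Lemma~\ref{lem:algebraic} one has $r=\min(3,q)=q$ for $q=2^\sharp$. For the interior and boundary power terms the remainder is then $O(\|\phi_k\|^{2^\sharp})=o(\|\phi_k\|^2)$; the interior one follows by Sobolev, the boundary one by the trace embedding.

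The $\alpha_k$-terms need separate care. The quadratic piece $\alpha_k\int V^{2^\sharp-2}\phi_k^2$ is nonnegative, so dropping it only helps the lower bound. Note that $Q$ itself already contains this term via $a_{\varepsilon_k}\sim\alpha_k$, so the bound from Step 3 applies in either form. The remainder $\alpha_k\int|\phi_k|^{2^\sharp}$ is handled by the elementary inequality $(b+\phi)^{2^\sharp}\ge b^{2^\sharp}+2^\sharp b^{2^\sharp-1}\phi$, which holds by convexity. With it the whole absorption contribution is bounded below by its zeroth- and first-order terms.

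\emph{Step 3: the linear and quadratic terms.} For the linear term, test the equation \eqref{eq:bubble_PDE} satisfied by $V$ on $\Lambda(\xi_k)$ against $\phi_k$ and integrate by parts. Since $\Omega$ differs from $\Lambda(\xi_k)$ only by a region of curvature size, the error in the interior and boundary terms is $O(\varepsilon_k\|\phi_k\|)$; the $b_k$-dependent pieces reduce to the same form because $\int\nabla V\cdot\nabla\phi_k=0$ by orthogonality. The $\lambda$-term contributes $\lambda\int V\phi_k=O(\varepsilon_k\|\phi_k\|)$. The absorption term gives
\[
\alpha_k\int V^{2^\sharp-1}\phi_k=O\bigl(\alpha_k\varepsilon_k\|\phi_k\|\bigr),
\]
using $\|V^{2^\sharp-1}\|_{(2^*)'}=O(\varepsilon_k)$ after rescaling. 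All the $O(\varepsilon_k\|\phi_k\|)$ terms are absorbed into $O(\varepsilon_k^2)+O(\|\phi_k\|^2)$ by Young's inequality. For the quadratic term, Lemma~\ref{lem:spectral_gap} applies because $\varepsilon_k\alpha_k\to0$ by \eqref{eq:alpha_delta}, giving $Q(\phi_k)\ge0$. We retain only $O(\|\phi_k\|^2)$, as the statement allows.

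\emph{Step 4: the bubble energy and the main obstacle.} Finally we need $I_{\alpha_k}(b_kV)\ge I_{\alpha_k}(V)+O(\varepsilon_k^2)+\dots$ together with Lemma~\ref{lem:expansion}. This is the step I expect to be delicate. $I_{\alpha_k}(bV)$ is not maximized at $b=1$: its fibering maximizer deviates from $1$ by $O(\varepsilon_k+\alpha_k\varepsilon_k)$, and $b_k$ is pinned only by the projection.

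I would avoid comparing with $b=1$ and instead compare with $\sup_{t>0}I_{\alpha_k}(tV)$. Because $u_k$ is a critical point, $\langle I'_{\alpha_k}(u_k),V\rangle=0$; expanding this identity pins $b_k$ to the Nehari value of $V$ up to $O(\|\phi_k\|+\varepsilon_k)$. The second-order deviation from the fibering maximum is then quadratic, and it can be absorbed into the stated error terms using $\alpha_k\varepsilon_k\to0$. Lemma~\ref{lem:expansion} applied to $\sup_t I_{\alpha_k}(tV)$ yields the same first-order terms, $A-L(n)H(\xi_k)\varepsilon_k+B(n)\alpha_k\varepsilon_k$. The factor $t$ changes only $O(\varepsilon_k^2)$ corrections, since $V_1$ is a critical point of $\Psi_{\R^n_+}$. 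Lemma~\ref{lem:expansion} was proved for $H(\xi)>0$, but the computation is valid for any sign of $H$, so it applies at every $\xi_k$.

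Collecting Steps 2–4 gives \eqref{eq:final_estimate}.
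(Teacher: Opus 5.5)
The gap is in Step 4, and it feeds back into Step 3. You pin $b_k$ using $\langle I'_{\alpha_k}(u_k),V_{\varepsilon_k,\xi_k}\rangle=0$, which treats $u_k$ as a critical point of $I_{\alpha_k}$. In Steps 3--4 you also use $\alpha_k\varepsilon_k\to0$ from \eqref{eq:alpha_delta}. Neither is a hypothesis here. The lemma concerns an arbitrary sequence with $u_k\rightharpoonup0$ and \eqref{eq:final_hyp}; the equation is added only in Lemma~\ref{lem:phi_size}, and \eqref{eq:alpha_delta} was proved for minimizers. Nor can the hypothesis simply be dropped. For the orthogonal decomposition of Lemma~\ref{lem:best_projection}, which is what Lemma~\ref{lem:phi_size} needs, the estimate fails under the stated hypotheses. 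Take $u_k=(1-\eta_k)V_{\delta_k,P_k}$ with $\eta_k\to0$. Then $d(u_k,\mathcal M)=0$, so $\phi_k=0$, $b_k=1-\eta_k$ and $(\varepsilon_k,\xi_k)=(\delta_k,P_k)$. On the other hand, $I_{\alpha_k}(u_k)=A-c\,\eta_k^2+O(\eta_k^3+\delta_k+\alpha_k\delta_k)$ with $c=-\tfrac12\frac{d^2}{dt^2}\Psi_{\R^n_+}(tV_1)\big|_{t=1}>0$. With $\alpha_k=\delta_k^{-1/4}$ and $\eta_k=\delta_k^{1/4}$ this contradicts \eqref{eq:final_estimate}. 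So $b_k\to1$ alone cannot bridge $I_{\alpha_k}(b_kV_{\varepsilon_k,\xi_k})$ and $I_{\alpha_k}(V_{\varepsilon_k,\xi_k})$. You located this obstacle correctly (the paper's proof passes over it, citing only $b_k\to1$), but your way around it imports a hypothesis the statement does not grant. The same issue affects Step 3. Orthogonality controls only the sum $\int_\Omega V^{2^*-1}\phi_k+\int_{\partial\Omega}V^{2^\sharp-1}\phi_k\,d\sigma=O(\varepsilon_k\|\phi_k\|)$, not each piece. With weights $b_k^{2^*-1}\ne b_k^{2^\sharp-1}$, a term $O(|b_k-1|\,\|\phi_k\|)$ survives, and it cannot be absorbed until $|b_k-1|$ is quantified.

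The repair is to add the hypothesis $u_k\in\mathcal N_{\alpha_k}$, which holds in every application in the paper; then no pinning is needed. Set $V:=V_{\varepsilon_k,\xi_k}$ and $\psi_k:=b_k^{-1}\phi_k$. Since $I_{\alpha_k}(u_k)=\sup_{t>0}I_{\alpha_k}(tu_k)\ge I_{\alpha_k}(V+\psi_k)$, you can expand about $V$ with coefficient one. Because $-\Delta V=V^{2^*-1}$ on all of $\R^n$, you get $\langle\Psi_\Omega'(V),\psi\rangle=\int_{\partial\Omega}(\partial_\nu V-V^{2^\sharp-1})\psi\,d\sigma=O(\varepsilon_k\|\psi\|)$ for every $\psi$ (this uses $n\ge5$). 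The $\lambda$- and $\alpha_k$-linear terms are $O(\varepsilon_k\|\psi_k\|)$ and $O(\alpha_k\varepsilon_k\|\psi_k\|)$, as you showed. Your convexity inequality disposes of the absorption term. The quadratic remainder is simply bounded below by $-C\|\psi_k\|^2$, which the display allows. Thus neither Lemma~\ref{lem:spectral_gap} nor $\alpha_k\varepsilon_k\to0$ is needed. Lemma~\ref{lem:expansion}, valid for either sign of $H$ as you note, then finishes, up to an $O(\alpha_k\varepsilon_k^2)$ term that the display omits but which is $o(\alpha_k\varepsilon_k)$. Your integration-by-parts treatment of the linear term is the right mechanism. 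The direct H\"older bound used in the paper gives only $O(\|\phi_k\|)$ for $\int_{\partial\Omega}V^{2^\sharp-1}\phi_k\,d\sigma$, since $\|V_{\varepsilon_k,\xi_k}\|_{2^\sharp,\partial\Omega}$ stays of order one. (Read literally, the display also follows with no extra hypothesis by taking coefficient one and remainder $u_k-V_{\varepsilon_k,\xi_k}$. That remainder, however, is not orthogonal to $T_{1,V_{\varepsilon_k,\xi_k}}(\mathcal M)$ and is of no use in Lemma~\ref{lem:phi_size}.)
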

		
		\begin{proof}
			Since $\{u_k\}$ satisfies the hypotheses of
			Lemma~\ref{lem:best_projection}, the decomposition
			\eqref{eq:decomp_u} holds for suitable
			$b_k$, $\varepsilon_k$, $\xi_k\in\partial\Omega$, and
			$\phi_k\in H^1(\Omega)$.
			Moreover,
			\[
			d(u_k,\mathcal M)
			\le
			\|\nabla(u_k-V_{\delta_k,P_k})\|_{2,\Omega}
			\to0
			\qquad\text{as }k\to\infty,
			\]
			which implies
			\[
			\|\nabla\phi_k\|_{2,\Omega}
			\to0.
			\]
			Therefore, by the triangle inequality,
			\begin{equation}\label{eq:bubble_diff}
				\|\nabla(V_{\delta_k,P_k}-b_kV_{\varepsilon_k,\xi_k})\|_{2,\Omega}
				\to0
				\qquad\text{as }k\to\infty.
			\end{equation}
			In particular,
			\begin{equation*}
				\lim_{k\to\infty}\|\nabla V_{\delta_k,P_k}\|_{2,\Omega}
			=\lim_{k\to\infty}\|\nabla V_{\varepsilon_k,\xi_k}\|_{2,\Omega}
			=\|\nabla V_1\|_{2,\R_+^n}.
			\end{equation*}
				Dividing by the norm  in \eqref{eq:bubble_diff} implies $|1 - b_k| \to 0$ or $|1 + b_k| \to 0$ as $k\to\infty$. Since $u_k \ge 0$ and the bubbles are positive, we must have $b_k > 0$ for large $k$. Thus,
			\begin{equation}\label{eq:b_limit}
				\lim_{k\to\infty} b_k = 1.
			\end{equation}
			From~\eqref{eq:bubble_diff}--\eqref{eq:b_limit},
			\begin{equation}\label{eq:pure_bubble_conv}
				\lim_{k\to\infty}\|\nabla(V_{\delta_k,P_k}-V_{\varepsilon_k,\xi_k})\|_{2,\Omega}^2=0.
			\end{equation}
				Recall the classical Sobolev bubble
			\[
		U_{a,\lambda}(x)
			:=C_{n}\,\frac{\lambda^{\frac{n-2}{2}}}{\bigl(1+\lambda^{2}|x-a|^{2}\bigr)^{\frac{n-2}{2}}},
			\quad a\in\mathbb{R}^{n},\ \lambda>0.
			\]
			Setting $\widetilde P_{k}:=P_{k}-\delta_{k}\,x_{n}^{0}\,e_{n}(P_{k})$,
			since $\{e_{j}(P_{k})\}_{j=1}^{n}$ is an orthonormal frame we have
			\begin{equation*}
				|x-\widetilde P_{k}|^{2}
				=\sum_{j=1}^{n}\langle x-\widetilde P_{k},e_{j}(P_{k})\rangle^{2}
				=|y'|^{2}+\bigl(y_{n}+\delta_{k}\,x_{n}^{0}\bigr)^{2},
			\end{equation*}
			where $y_{j}:=\langle x-P_{k},e_{j}(P_{k})\rangle$.
			Substituting into definition~\eqref{equa2.9} of the bubble, we deduce
			\begin{equation}\label{eq:bdry_to_classical}
				V_{\delta_{k},P_{k}}(x)
				=C_{n}\,\frac{\delta_{k}^{-\frac{n-2}{2}}}
				{\left(1+\delta_{k}^{-2}|x-\widetilde P_{k}|^{2}\right)^{\frac{n-2}{2}}}
				=U_{\widetilde P_k,\delta_k^{-1}}(x).
			\end{equation}
			Similarly, $V_{\varepsilon_{k},\xi_{k}}(x)=U_{\widetilde\xi_{k},\varepsilon_{k}^{-1}}(x)$
			with $\widetilde\xi_{k}:=\xi_{k}-\varepsilon_{k}\,x_{n}^{0}\,e_{n}(\xi_{k})$.
			We now extract the parameter convergence from~\eqref{eq:pure_bubble_conv}
			via a rescaling argument.
			Define the rescaling operator $\mathcal{S}_{k}$ by
			\[
			(\mathcal{S}_{k}V)(z)
			:=\delta_{k}^{\frac{n-2}{2}}V\!\bigl(T_{P_{k}}(\delta_{k}z)\bigr),
			\qquad z\in\widehat\Omega_{k}
			:=\frac{T_{P_{k}}^{-1}(\Omega)}{\delta_{k}}\to\mathbb{R}^{n}_{+}.
			\]
			Since $T_{P_{k}}$ is an isometry and $\mathcal{S}_{k}$ preserves the
			$L^{2}$-norm of the gradient, a direct computation
			from~\eqref{eq:bdry_to_classical} gives
			\begin{equation}\label{eq:rescaled_bubbles}
				\mathcal{S}_{k}V_{\delta_{k},P_{k}}=V_{1}
				=U_{-x_{n}^{0}e_{n},1},
				\qquad
				\mathcal{S}_{k}V_{\varepsilon_{k},\xi_{k}}
				=U_{\hat a_{k},\hat\lambda_{k}},
			\end{equation}
			where $\hat\lambda_{k}:=\frac{\delta_{k}}{\varepsilon_{k}}$ and
			$\hat a_{k}:=\frac{T_{P_{k}}^{-1}(\widetilde\xi_{k})}{\delta_{k}}$.
			Applying $\mathcal{S}_{k}$ to~\eqref{eq:pure_bubble_conv} yields
			\begin{equation}\label{eq:rescaled_diff}
				\bigl\|\nabla\bigl(U_{-x_{n}^{0}e_{n},1}-U_{\hat a_{k},\hat\lambda_{k}}\bigr)
				\bigr\|^2_{2,\widehat\Omega_{k}}\to 0.
			\end{equation}
			Since $\widehat\Omega_{k}\nearrow\mathbb{R}^{n}_{+}$ and
			\eqref{eq:rescaled_diff} extends to all of $\mathbb{R}^{n}_{+}$,
			we conclude that
			\[
			\hat\lambda_k\to1,
			\qquad
			\hat a_k\to -x_n^0\mathbf e_n.
			\]
			By definition,
			\[
			\hat a_k=\frac{T_{P_k}^{-1}(\widetilde\xi_k)}{\delta_k},
			\qquad
			\widetilde\xi_k
			=\xi_k-\varepsilon_k x_n^0 e_n(\xi_k).
			\]
			Since
			\[
			T_{P_k}^{-1}(x)
			=
			R_{P_k}^{\mathsf T}(x-P_k),
			\qquad
			R_{P_k}^{\mathsf T}=(DT_{P_k})^{-1},
			\]
			we obtain
			\[
			\hat a_k
			=
			\frac{T_{P_k}^{-1}(\xi_k)}{\delta_k}
			-
			\frac{\varepsilon_k}{\delta_k}
			x_n^0
			R_{P_k}^{\mathsf T}e_n(\xi_k).
			\]
			Since $\hat a_k\to-x_n^0\mathbf e_n$, the sequence $\{\hat a_k\}$ is bounded.
			Moreover,
			\[
			|R_{P_k}^{\mathsf T}e_n(\xi_k)|=1,
			\qquad
			\frac{\varepsilon_k}{\delta_k}\to1.
			\]
			Hence,
			\[
			\frac{T_{P_k}^{-1}(\xi_k)}{\delta_k}
			=
			\hat a_k
			+\frac{\varepsilon_k}{\delta_k}
			x_n^0R_{P_k}^{\mathsf T}e_n(\xi_k)
			\]
			is bounded. Since
			\[
			T_{P_k}^{-1}(\xi_k)
			=
			R_{P_k}^{\mathsf T}(\xi_k-P_k),
			\]
			where $R_{P_k}^{\mathsf T}$ is an isometry, it follows that
			\[
			|P_k-\xi_k|
			=
			O(\delta_k)\to0
			\qquad\text{as }k\to\infty.
			\]
			
			Using Hölder's inequality together with the Sobolev embedding, we obtain
			\[
			\int_{\Omega}V_{\varepsilon_k,\xi_k}\phi_k\,dx
			=
			\begin{cases}
				O\left(\varepsilon_k^{3/2}\|\phi_k\|\right), & n=5,\\[2mm]
				O\left(\varepsilon_k^{2}|\log\varepsilon_k|^{2/3}\|\phi_k\|\right), & n=6,\\[2mm]
				O\left(\varepsilon_k^{2}\|\phi_k\|\right), & n\ge7.
			\end{cases}
			\]
			Similarly, Hölder's inequality together with the trace Sobolev embedding gives
			\[
			\int_{\partial\Omega}
			V_{\varepsilon_k,\xi_k}^{2^\sharp-1}\phi_k\,d\sigma
			\le
			\|\phi_k\|_{2^\sharp,\partial\Omega}
			\left(
			\int_{\partial\Omega}
			V_{\varepsilon_k,\xi_k}^{\frac{2(n-1)}{n-2}}
			\,d\sigma
			\right)^{\!\frac{n}{2(n-1)}}
			=
			O(\varepsilon_k\|\phi_k\|).
			\]
			Applying Lemma~\ref{lem:algebraic} to the decomposition
			\[
			u_k=b_kV_{\varepsilon_k,\xi_k}+\phi_k,
			\]
			we obtain
			\[
			I_{\alpha_k}(u_k)
			=
			I_{\alpha_k}(b_kV_{\varepsilon_k,\xi_k})
			+O(\alpha_k\varepsilon_k\|\phi_k\|)
			+O(\|\phi_k\|^2).
			\]
			Since $b_k\to1$, Lemma~\ref{lem:expansion} then yields
			\[
			I_{\alpha_k}(u_k)
			\ge
			A
			-L(n)H(\xi_k)\varepsilon_k
			+B(n)\alpha_k\varepsilon_k
			+O(\varepsilon_k^2)
			+O(\alpha_k\varepsilon_k\|\phi_k\|)
			+O(\|\phi_k\|^2).
			\qedhere
			\]
		\end{proof}
		
We remark that, if in addition $u_k$ is a solution of \eqref{eq:main},
the remainder terms in \eqref{eq:final_estimate} can be improved, as
shown in the next lemma.
	
		\begin{Lemma}\label{lem:phi_size}
		Under the hypotheses of Lemma~\ref{lem:final_estimate},
			assume in addition that
			$u_k$ is a solution of problem \eqref{eq:main}. If
			$\alpha_k\to+\infty$ and $\varepsilon_k\to0$, then the remainder satisfies
			\begin{equation}\label{eq4.8}
				\|\phi_k\|^2=O(\alpha_k\varepsilon_k\|\phi_k\|).
			\end{equation}
		\end{Lemma}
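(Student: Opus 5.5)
We plan to test the equation satisfied by $u_k$ against $\phi_k$ and use the coercivity estimate of Lemma~\ref{lem:spectral_gap}. Since $u_k=b_kV_{\varepsilon_k,\xi_k}+\phi_k$ solves \eqref{eq:main}, we have $\langle I'_{\alpha_k}(u_k),\phi_k\rangle=0$, that is,
\[
\int_\Omega\nabla u_k\cdot\nabla\phi_k+\lambda u_k\phi_k+\alpha_k u_k^{2^\sharp-1}\phi_k-u_k^{2^*-1}\phi_k\,dx-\int_{\partial\Omega}u_k^{2^\sharp-1}\phi_k\,d\sigma=0 .
\]
The orthogonality $\phi_k\perp T_{1,V_{\varepsilon_k,\xi_k}}(\mathcal M)$ from Lemma~\ref{lem:best_projection} kills the cross term $b_k\int\nabla V_{\varepsilon_k,\xi_k}\cdot\nabla\phi_k$. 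We then expand the nonlinear terms with Lemma~\ref{lem:algebraic}, taking $V=b_kV_{\varepsilon_k,\xi_k}$, $\omega=\phi_k$ and $L=1$. Each term splits into a zeroth-order part, which is linear in $\phi_k$; a first-order part, which is quadratic; and a remainder that is superquadratic, hence $o(\|\phi_k\|^2)$ by the Sobolev and trace inequalities, because $\|\nabla\phi_k\|\to0$.

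Collecting the quadratic parts (using $b_k\to1$), we get $Q_{\varepsilon_k}(\phi_k)+\lambda\|\phi_k\|_2^2$ up to $o(\|\phi_k\|^2)$, with $a_{\varepsilon_k}=(2^\sharp-1)\alpha_k b_k^{2^\sharp-2}$. The hypothesis $\varepsilon_ka_{\varepsilon_k}\to0$ holds because $\alpha_k\varepsilon_k\to0$, which follows from Lemma~\ref{lem:concentration_locus} and $\varepsilon_k/\delta_k\to1$. The absorption term enters with a plus sign, so it helps. Lemma~\ref{lem:spectral_gap} therefore bounds this quadratic part below by $\gamma\|\nabla\phi_k\|^2+\lambda\|\phi_k\|_2^2\ge c\|\phi_k\|^2$.

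The linear terms have to be estimated. The bubble equation \eqref{eq:bubble_PDE} holds only on the tangent half-space $\Lambda(\xi_k)$, not on $\Omega$. So we integrate by parts,
\[
\int_\Omega\nabla V\cdot\nabla\phi=\int_\Omega(-\Delta V)\phi+\int_{\partial\Omega}\partial_\nu V\,\phi ,
\]
and compare $-\Delta V=V^{2^*-1}$ and $\partial_\nu V-V^{2^\sharp-1}$ on $\partial\Omega$. Near $\xi_k$ the normal of $\partial\Omega$ deviates from $-e_n$ by $O(|x-\xi_k|)$ and the boundary lies within $O(|x-\xi_k|^2)$ of $\partial\Lambda(\xi_k)$. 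This boundary mismatch is $O(\varepsilon_k\|\phi_k\|)$, computed as for the $(E_4)$ term of Lemma~\ref{lem:expansion} together with the trace--H\"older estimate already used in Lemma~\ref{lem:final_estimate}. The $\lambda$ term contributes $\lambda\int V\phi=O(\varepsilon_k^{3/2}\|\phi_k\|)$, by the H\"older bounds in that lemma. Since these linear pieces come out of the exact equation, the orthogonality makes the leading pure-bubble contributions cancel, leaving only these error terms. The absorption term gives $\alpha_k b_k^{2^\sharp-1}\int_\Omega V^{2^\sharp-1}\phi_k$. By H\"older with exponent $2^*$ on $\phi_k$ and the scaling $\|V_{\varepsilon}^{2^\sharp-1}\|_{L^{2n/(n+2)}(\Omega)}=O(\varepsilon)$, valid for $n\ge5$, this is $O(\alpha_k\varepsilon_k\|\phi_k\|)$.

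Putting the pieces together gives
\[
c\|\phi_k\|^2\le O(\alpha_k\varepsilon_k\|\phi_k\|)+O(\varepsilon_k\|\phi_k\|)+o(\|\phi_k\|^2).
\]
Since $\alpha_k\to\infty$, the bound $\varepsilon_k=o(\alpha_k\varepsilon_k)$ gives \eqref{eq4.8}. The main obstacle is the boundary-mismatch estimate. One must check that the curvature error in $\partial_\nu V-V^{2^\sharp-1}$ and the missing/extra region $\Sigma$ each contribute at most $O(\varepsilon_k\|\phi_k\|)$. We plan to do this in the flattened coordinates \eqref{eq:psi_def}, bounding $|\partial_\nu V_{\varepsilon_k,\xi_k}-V_{\varepsilon_k,\xi_k}^{2^\sharp-1}|\lesssim|x-\xi_k|\,|\nabla V_{\varepsilon_k,\xi_k}|+|x-\xi_k|^2 V_{\varepsilon_k,\xi_k}^{2^\sharp-1}/\varepsilon_k$ and then applying H\"older and the trace inequality.
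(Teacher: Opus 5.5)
Your route is the paper's: test the equation against $\phi_k$, remove $\int_\Omega\nabla V_{\varepsilon_k,\xi_k}\cdot\nabla\phi_k$ by orthogonality, expand with Lemma~\ref{lem:algebraic}, and apply Lemma~\ref{lem:spectral_gap}. You are more explicit than the paper about the linear terms, and your boundary-mismatch bound $O(\varepsilon_k\|\phi_k\|)$ is correct for $n\ge5$. The region $\Sigma$ plays no role: $V_1$ is an Aubin--Talenti bubble centred at $-x_n^0e_n$, so $-\Delta V_{\varepsilon,\xi}=V_{\varepsilon,\xi}^{2^*-1}$ holds on all of $\R^n$.

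\textbf{The gap.} The step that fails is your claim that orthogonality cancels the pure-bubble linear terms; this is true only if $b_k=1$. Set $X_k:=\int_\Omega V_{\varepsilon_k,\xi_k}^{2^*-1}\phi_k\,dx$ and $Y_k:=\int_{\partial\Omega}V_{\varepsilon_k,\xi_k}^{2^\sharp-1}\phi_k\,d\sigma$. Orthogonality to $V_{\varepsilon_k,\xi_k}$ gives only $X_k+Y_k=O(\varepsilon_k\|\phi_k\|)$. The test, however, produces
$b_k^{2^*-1}X_k+b_k^{2^\sharp-1}Y_k=b_k^{2^\sharp-1}(X_k+Y_k)+(b_k^{2^*-1}-b_k^{2^\sharp-1})X_k$.
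The two critical nonlinearities have different homogeneities, so the last term does not cancel. Nor is $X_k$ small by itself. In the limit, $\psi\mapsto\int_{\R^n_+}V_1^{2^*-1}\psi$ is not a combination of the orthogonality functionals: its Riesz representative $w$ satisfies $-\Delta w=V_1^{2^*-1}$ and $\partial_{x_n}w=0$, and the only element of $\mathcal{V}$ satisfying the first equation is $V_1$, which violates the second.

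So this term is merely $O(|b_k-1|\,\|\phi_k\|)=o(\|\phi_k\|)$. That gives $\|\phi_k\|\to0$, but not \eqref{eq4.8}. The crude bound $|b_k-1|\le C(\|\phi_k\|+\alpha_k\varepsilon_k)$ leaves a term $C\|\phi_k\|^2$, which cannot be absorbed unless $C<\gamma$. The paper's proof has the same omission: it lists all linear terms as $O(\varepsilon_k\|\phi_k\|)+O(\alpha_k\varepsilon_k\|\phi_k\|)$.

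\textbf{The fix.} Also test the equation against $V_{\varepsilon_k,\xi_k}$. Use $\int_\Omega\nabla\phi_k\cdot\nabla V_{\varepsilon_k,\xi_k}=0$, $\|\nabla V_{\varepsilon_k,\xi_k}\|_{2,\Omega}^2=\int_\Omega V_{\varepsilon_k,\xi_k}^{2^*}+\int_{\partial\Omega}V_{\varepsilon_k,\xi_k}^{2^\sharp}+O(\varepsilon_k)$, $X_k+Y_k=O(\varepsilon_k\|\phi_k\|)$, and $\alpha_k\int_\Omega u_k^{2^\sharp-1}V_{\varepsilon_k,\xi_k}=O(\alpha_k\varepsilon_k)$. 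A first-order expansion in $b_k-1$ then gives
\[
b_k-1=-\frac{2^*-2^\sharp}{D_k}\,X_k+O\bigl(\alpha_k\varepsilon_k+\|\phi_k\|^2\bigr),
\qquad
D_k:=(2^*-2)\int_\Omega V_{\varepsilon_k,\xi_k}^{2^*}\,dx+(2^\sharp-2)\int_{\partial\Omega}V_{\varepsilon_k,\xi_k}^{2^\sharp}\,d\sigma ,
\]
with $D_k$ bounded away from $0$. Hence
$(b_k^{2^*-1}-b_k^{2^\sharp-1})X_k=-\frac{(2^*-2^\sharp)^2}{D_k}X_k^2+O(\alpha_k\varepsilon_k\|\phi_k\|)+o(\|\phi_k\|^2)$.
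This has the favourable sign: it is the Schur complement coming from the negative direction $V$ of the second variation. It therefore moves to the left-hand side and can be discarded, after which your inequality closes.

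\textbf{A smaller point.} The superquadratic remainder of the absorption term is $\alpha_k\,O\bigl(\int_\Omega|\phi_k|^{2^\sharp}\bigr)$, since $2^\sharp-1<2$. Because $\alpha_k\to\infty$, this is not $o(\|\phi_k\|^2)$ merely from $\|\phi_k\|\to0$; the paper also drops the factor $\alpha_k$. Avoid expanding this term. Since $u_k>0$ and $b_kV_{\varepsilon_k,\xi_k}>0$, monotonicity of $t\mapsto t^{2^\sharp-1}$ gives
$\alpha_k\int_\Omega\bigl[u_k^{2^\sharp-1}-(b_kV_{\varepsilon_k,\xi_k})^{2^\sharp-1}\bigr]\phi_k\,dx\ge0$.
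So only the linear piece $O(\alpha_k\varepsilon_k\|\phi_k\|)$ remains. The coercivity you then need is for the form without the $a_{\varepsilon_k}$-term, which the proof of Lemma~\ref{lem:spectral_gap} provides equally well.
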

		
		\begin{proof}
	By Lemma~\ref{lem:final_estimate}, we have the decomposition
	$u_k=b_kV_{\varepsilon_k,\xi_k}+\phi_k$, where
	$\phi_k\perp T_{1,V_{\varepsilon_k,\xi_k}}(\mathcal M)$.
	Since $u_k$ is a solution of \eqref{eq:main}, testing the equation
	against $\phi_k$ gives
		\begin{equation*}
		\begin{aligned}
			&\int_{\Omega}(|\nabla\phi_k|^2+\lambda\phi_k^2)\,\d x
			+\lambda b_k\!\int_{\Omega}V_{\varepsilon_k,\xi_k}\phi_k\,\d x
			+\alpha_k\!\int_{\Omega}(b_kV_{\varepsilon_k,\xi_k}+\phi_k)^{2^\sharp-1}\phi_k\,\d x\\
			&=\int_{\Omega}(b_kV_{\varepsilon_k,\xi_k}+\phi_k)^{2^*-1}\phi_k\,\d x
			+\int_{\partial\Omega}(b_kV_{\varepsilon_k,\xi_k}+\phi_k)^{2^\sharp-1}\phi_k\,\d\sigma,
		\end{aligned}
	\end{equation*}
The mixed gradient term on the left-hand side vanishes due to the
orthogonality condition
\[
\int_{\Omega}
\nabla V_{\varepsilon_k,\xi_k}\cdot\nabla\phi_k\,dx=0.
\]
	Applying Lemma~\ref{lem:algebraic} and using $b_k\to1$, we obtain
			\begin{equation*}
				\begin{aligned}
					&\int_{\Omega}(|\nabla\phi_k|^2+\lambda\phi_k^2)\,\d x
					+\alpha_k(2^\sharp-1)\!\int_{\Omega}V_{\varepsilon_k,\xi_k}^{2^\sharp-2}\phi_k^2\,\d x
					=(2^*-1)\!\int_{\Omega}V_{\varepsilon_k,\xi_k}^{2^*-2}\phi_k^2\,\d x\\
					&+(2^\sharp-1)\!\int_{\partial\Omega}V_{\varepsilon_k,\xi_k}^{2^\sharp-2}\phi_k^2\,\d\sigma
					+O(\varepsilon_k\|\phi_k\|)
					+O(\alpha_k\varepsilon_k\|\phi_k\|)
					+O(\|\phi_k\|^{r+1}),
				\end{aligned}
			\end{equation*}
			where
			$r=\min(2,2^\sharp-1)>1$
			is given by Lemma~\ref{lem:algebraic} with
			$L=1$ and $q=2^\sharp-1$.
		Rearranging the above identity, we obtain
			\begin{equation*}
				Q_{\varepsilon_k}(\phi_k)+\lambda\!\int_{\Omega}\phi_{k}^2\,\d x
				= O(\varepsilon_k\|\phi_k\|)+O(\alpha_{k}\varepsilon_k\|\phi_k\|)+O(\|\phi_k\|^{r+1}).
			\end{equation*}
			By Lemma~\ref{lem:spectral_gap},
			\[
			Q_{\varepsilon_k}(\phi_k)
			\ge
			\gamma\|\nabla\phi_k\|^2.
			\]
			Hence,
			\[
			\|\phi_k\|^2
			\le
			C\varepsilon_k\|\phi_k\|
			+C\alpha_k\varepsilon_k\|\phi_k\|
			+C\|\phi_k\|^{r+1}.
			\]
			Since $\alpha_k\to+\infty$,
			the first term is absorbed into the second one. Moreover,
			$r+1>2$ and $\phi_k\to0$ in $H^1(\Omega)$ imply that
	$
			\|\phi_k\|^{r+1}
			=o(\|\phi_k\|^2).
		$
			Therefore,
			\[
			\|\phi_k\|^2
			\le
			C\alpha_k\varepsilon_k\|\phi_k\|
			+o(\|\phi_k\|^2).
			\]
			Absorbing the last term into the left-hand side yields
			\[
			\|\phi_k\|^2
			=
			O(\alpha_k\varepsilon_k\|\phi_k\|).
			\]
		\end{proof}
		
		\smallskip
		We next prove Theorem \ref{thm:main} \textup{(i)--(ii)}.
\begin{proof}[Proof of Theorem \ref{thm:main}\, \normalfont{(i)--(ii)}]
		We argue by contradiction. Suppose that $\alpha_{0}=+\infty$. Then
		$S_{\alpha}<A$ for every $\alpha\ge0$. By
		Lemma~\ref{lem3.1}, for any sequence
		$\alpha_k\to+\infty$, there exist least-energy solutions
		$u_k\in\mathcal N_{\alpha_k}$ satisfying
		\[
		I_{\alpha_k}(u_k)=S_{\alpha_k}\to A,
		\qquad
		u_k\rightharpoonup0
		\quad\text{in }H^1(\Omega)\qquad \text{as } k\to\infty.
		\]
		By Lemma~\ref{lem:concentration_locus}, the sequence
		$\{u_k\}$ concentrates at a boundary point
		$P_0\in\partial\Omega$. More precisely,
		\[
		\delta_k=\kappa_n^{2/(n-2)}M_k^{-2/(n-2)}
		\to0,
		\qquad
		\alpha_k\delta_k\to0\qquad \text{as } k\to\infty,
		\]
		and
		\[
		\|\nabla(u_k-V_{\delta_k,P_k})\|_{L^2(\Omega)}
		\to0\qquad \text{as } k\to\infty.
		\]
		
		Applying Lemma~\ref{lem:final_estimate}, we obtain
		\[
		I_{\alpha_k}(u_k)
		\ge
		A-L(n)H(\xi_k)\varepsilon_k
		+B(n)\alpha_k\varepsilon_k
		+O(\varepsilon_k^2)
		+O(\alpha_k\varepsilon_k\|\phi_k\|)
		+O(\|\phi_k\|^2),
		\]
		where
		$\xi_k\to P_0$ and
		$\varepsilon_k/\delta_k\to1$.
		
	Moreover, Lemma~\ref{lem:phi_size} yields
$
	\|\phi_k\|=O(\alpha_k\varepsilon_k),
$
	and therefore
	\[
	O(\alpha_k\varepsilon_k\|\phi_k\|)
	+
	O(\|\phi_k\|^2)
	=
	O(\alpha_k^2\varepsilon_k^2)
	=
	o(\alpha_k\varepsilon_k),
	\]
	where the last relation follows from
	$\alpha_k\varepsilon_k\to0$ as $k\to\infty$.
		Therefore,
		\[
		I_{\alpha_k}(u_k)
		\ge
		A+
		\left[
		B(n)\alpha_k
		-
		L(n)H(\xi_k)
		\right]\varepsilon_k
		+
		O(\varepsilon_k^2)
		+
		o(\alpha_k\varepsilon_k).
		\]
			Since
		$H(\xi_k)\to H(P_0)$
		and
		$B(n)>0$,
		there exists $k_0$ such that
		\[
		B(n)\alpha_k-L(n)H(\xi_k)
		\ge
		\frac12B(n)\alpha_k
		\]
		for all $k\ge k_0$.
		Consequently,
		\[
		I_{\alpha_k}(u_k)
		\ge
		A
		+\frac{B(n)}2\alpha_k\varepsilon_k
		+O(\varepsilon_k^2)
		+o(\alpha_k\varepsilon_k).
		\]
		
		Finally, since
		$\alpha_k\varepsilon_k\to0$ as $k\to\infty$ and
		$\varepsilon_k^2=o(\alpha_k\varepsilon_k)$,
		the error terms are negligible compared with
		$\alpha_k\varepsilon_k$. Hence, for all sufficiently large $k$,
		\[
		I_{\alpha_k}(u_k)
		>
		A,
		\]
		which contradicts
		\[
		I_{\alpha_k}(u_k)
		=
		S_{\alpha_k}
		\le
		A.
		\]
		Therefore,
		$\alpha_0<+\infty$.
		
		The existence and nonexistence assertions in
		Theorem~\ref{thm:main}\textup{(i)--(ii)}
		now follow from
		Corollary~\ref{cor:existence_nonexistence}.
	\end{proof}


\section{Lower bound of $\alpha_0$ and the existence for $\alpha =\alpha_0$
	}\label{sec:section5}
	In this section, we analyze the critical case $\alpha=\alpha_0$
	and complete the proof of Theorem~\ref{thm:main}(iii) by establishing
	the following three lemmas.
	
\begin{Lemma}\label{lem:alpha_0_lower_bound}
	For every
$
	\alpha<C(n)\max_{\partial\Omega}H,
$
	we have
	\[
	S_{\alpha}<A.
	\]
\end{Lemma}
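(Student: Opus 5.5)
We use the boundary bubble at a point of maximal mean curvature as a test function in the minimax characterization \eqref{eq:Salpha_def}, and read off from the energy expansion of Lemma~\ref{lem:expansion} that the fibering maximum lies strictly below $A$. If $\max_{\partial\Omega}H\le 0$, there is no $\alpha\ge0$ with $\alpha<C(n)\max_{\partial\Omega}H$, so the statement is vacuous. We may therefore assume $\max_{\partial\Omega}H>0$.

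Fix $\xi_0\in\partial\Omega$ with $H(\xi_0)=\max_{\partial\Omega}H>0$, and let $0\le\alpha<C(n)H(\xi_0)$. Then the gap
\[
\eta:=L(n)H(\xi_0)-B(n)\alpha
\]
is strictly positive, because $C(n)=L(n)/B(n)$ and $B(n)>0$. As test functions we take $u_\varepsilon:=\eta_0 V_{\varepsilon,\xi_0}$, where $\eta_0$ is the cut-off used in Lemma~\ref{lem:S_le_A}. Since $V_{\varepsilon,\xi_0}$ decays like $\varepsilon^{(n-2)/2}$ away from $\xi_0$, the cut-off changes every integral in $I_\alpha$ only by $O(\varepsilon^{n-2})$ (gradient term) or smaller. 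This is $o(\varepsilon^2)$ for $n\ge5$, so the conclusion of Lemma~\ref{lem:expansion} holds for $u_\varepsilon$ as well (alternatively, one works directly with the restriction of $V_{\varepsilon,\xi_0}$ to $\Omega$, which already lies in $H^1(\Omega)$).

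The key step is to control $\sup_{t>0}I_\alpha(tu_\varepsilon)$, not just $I_\alpha(u_\varepsilon)$. Write
\[
a_\varepsilon=\|u_\varepsilon\|^2,\qquad c_\varepsilon=\|u_\varepsilon\|_{2^*,\Omega}^{2^*},\qquad b_\varepsilon=\|u_\varepsilon\|_{2^\sharp,\partial\Omega}^{2^\sharp}-\alpha\|u_\varepsilon\|_{2^\sharp,\Omega}^{2^\sharp}.
\]
The individual expansions $(E_1)$--$(E_4)$ in the proof of Lemma~\ref{lem:expansion} show that each of these equals its half-space value plus $O(\varepsilon)$. The half-space values satisfy the Nehari identity \eqref{eq:V1-nehari}, so $V_1$ is a critical point of its own fibering map. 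By \eqref{eq:fibring-formula}, $t_\varepsilon:=t_\alpha(u_\varepsilon)$ is a smooth function of $(a_\varepsilon,b_\varepsilon,c_\varepsilon)$, hence $t_\varepsilon=1+O(\varepsilon)$. Set $g(t)=I_\alpha(tu_\varepsilon)$. Then $g'(1)=O(\varepsilon)$, and $g''$ is uniformly negative near $t=1$, since $g''(1)$ converges to $(2-2^*)\|V_1\|^{2^*}_{2^*}+(2-2^\sharp)\|V_1\|^{2^\sharp}_{2^\sharp,\partial}<0$. A Taylor expansion about $t=1$ gives
\[
\sup_{t>0}I_\alpha(tu_\varepsilon)=g(t_\varepsilon)=g(1)+O(\varepsilon^2)=A-\eta\,\varepsilon+O(\varepsilon^2),
\]
where $\alpha$ is fixed, so the $O(\alpha\varepsilon^2)$ remainder in Lemma~\ref{lem:expansion} is $O(\varepsilon^2)$. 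Choosing $\varepsilon$ small enough that the $O(\varepsilon^2)$ term is below $\eta\varepsilon/2$, and using \eqref{eq:Salpha_def}, we obtain
\[
S_\alpha\le A-\tfrac{\eta}{2}\varepsilon<A.
\]

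The main obstacle is making the second-order claim $g(t_\varepsilon)-g(1)=O(\varepsilon^2)$ rigorous. This needs uniform bounds on $g''$ near $t=1$, which follow from the convergence of $a_\varepsilon,b_\varepsilon,c_\varepsilon$ to their half-space limits together with $\alpha\|u_\varepsilon\|_{2^\sharp,\Omega}^{2^\sharp}=O(\alpha\varepsilon)$. A secondary point is checking that the $\lambda$-term $\frac{\lambda}{2}\|u_\varepsilon\|_{2,\Omega}^2=O(\varepsilon^2)$ really is of second order for $n\ge5$. This holds because $V_1\in L^2(\mathbb{R}^n_+)$ exactly when $n>4$.
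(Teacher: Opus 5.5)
Your proposal is correct and follows essentially the paper's route. You test the minimax characterization with the boundary bubble at a point of maximal mean curvature, use $t_\alpha(V_{\varepsilon,P})=1+O(\varepsilon)$, and read off $S_\alpha\le A-\bigl(L(n)H(P)-B(n)\alpha\bigr)\varepsilon+O(\varepsilon^2)<A$ from Lemma~\ref{lem:expansion}. Your Taylor argument, based on $g'(1)=a_\varepsilon-b_\varepsilon-c_\varepsilon=O(\varepsilon)$ and bounded $g''$, justifies the step $I_\alpha(t_\varepsilon V_{\varepsilon,P})=I_\alpha(V_{\varepsilon,P})+O(\varepsilon^2)$ that the paper only asserts, and working with the gap $\eta$ avoids the paper's division by $\alpha$ when $\alpha=0$.
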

\begin{proof}
	Let $P\in\partial\Omega$ satisfy
	\[
	H(P)=\max_{x\in\partial\Omega}H(x).
	\]
	Since $V_{\varepsilon,P}\not\equiv0$,
	Lemma~\ref{lem:nehari-fibring}\textup{(i)} yields a unique
	$t_\varepsilon:=t_\alpha(V_{\varepsilon,P})>0$
	such that
	$t_\varepsilon V_{\varepsilon,P}\in\mathcal N_\alpha$.
	Moreover,
	\[
	t_\varepsilon=1+O(\varepsilon)
	\qquad\text{as }\varepsilon\to0^+.
	\]
	
	Applying Lemma~\ref{lem:expansion}, we obtain
	\[
	\begin{aligned}
		S_\alpha
		&\le I_\alpha(t_\varepsilon V_{\varepsilon,P})  \\
		&=I_\alpha(V_{\varepsilon,P})+O(\varepsilon^2) \\
		&=A-L(n)H(P)\varepsilon
		+B(n)\alpha\varepsilon
		+O(\varepsilon^2+\alpha\varepsilon^2) \\
		&=A
		-B(n)\alpha\varepsilon
		\left[
		\frac{L(n)H(P)}{B(n)\alpha}
		-1
		+o(1)
		\right]
		\qquad\text{as }\varepsilon\to0^+.
	\end{aligned}
	\]
	
	Since
	$
	\alpha
	<
	C(n)
	\max_{\partial\Omega}H
	=
	\frac{L(n)}{B(n)}H(P),
$
	the quantity inside the brackets is strictly positive for
	$\varepsilon>0$ sufficiently small. Consequently,
$
	S_\alpha<A.
$
\end{proof}
\begin{Corollary}
	The lower bound of $\alpha_{0}$  is $ C(n) \underset{\partial \Omega}{\max} H $.
\end{Corollary}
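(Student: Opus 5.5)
The Corollary asserts $\alpha_0\ge C(n)\max_{\partial\Omega}H$. I would obtain it directly from Lemma~\ref{lem:alpha_0_lower_bound}, the definition~\eqref{eq:alpha0_def} of $\alpha_0$, and the monotonicity of $S_\alpha$. The argument is by contradiction.

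Set $\alpha^*:=C(n)\max_{\partial\Omega}H$.

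\emph{Trivial case.} If $\alpha^*\le 0$, the inequality is immediate, since $\alpha_0>0$ by Remark~\ref{rmk:alpha0_positive}. So assume $\alpha^*>0$, and suppose for contradiction that $\alpha_0<\alpha^*$.

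\emph{Step 1: $S_{\alpha_0}=A$.} Theorem~\ref{thm:main}(i)--(ii), proved in Section~\ref{sec:section4}, shows that $\alpha_0<+\infty$. By Remark~\ref{rmk:alpha0_positive}, which uses the continuity in Lemma~\ref{lem:monotone_continuous}, we then have $S_{\alpha_0}=A$.

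\emph{Step 2: contradiction.} Pick any $\alpha\in(\alpha_0,\alpha^*)$. Since $\alpha\mapsto S_\alpha$ is non-decreasing (Lemma~\ref{lem:monotone_continuous}) and $S_\alpha\le A$ (Lemma~\ref{lem:S_le_A}), we get $A=S_{\alpha_0}\le S_\alpha\le A$, hence $S_\alpha=A$. On the other hand, $\alpha<\alpha^*$, so Lemma~\ref{lem:alpha_0_lower_bound} gives $S_\alpha<A$. This is a contradiction, so $\alpha_0\ge\alpha^*$.

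\emph{Alternative without Step 1.} One can avoid the finiteness input: for every $\alpha<\alpha^*$, Lemma~\ref{lem:alpha_0_lower_bound} shows $\alpha\notin\{\beta\ge0: S_\beta=A\}$. Hence this set lies in $[\alpha^*,\infty)$, and its infimum $\alpha_0$ is at least $\alpha^*$ (with $\inf\emptyset=+\infty$ also fine).

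\emph{Where the real difficulty lies.} The only substantive input is Lemma~\ref{lem:alpha_0_lower_bound}, which I take as given. The delicate point inside it is that $H(P)>0$ is needed to apply Lemma~\ref{lem:expansion}. This holds in the relevant case: $\alpha^*>0$ forces $\max_{\partial\Omega}H>0$, which is in any case true for bounded smooth domains. The other point is that replacing $V_{\varepsilon,P}$ by the Nehari-projected $t_\varepsilon V_{\varepsilon,P}$ costs only $O(\varepsilon^2)$. This follows because $t\mapsto I_\alpha(tV_{\varepsilon,P})$ is stationary at $t_\varepsilon$ and $|t_\varepsilon-1|=O(\varepsilon)$.

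The Corollary is thus a direct consequence of Lemma~\ref{lem:alpha_0_lower_bound} and the definition of $\alpha_0$, and I expect no genuine obstacle beyond that lemma.
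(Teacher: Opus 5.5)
Your proposal is correct and is essentially the paper's argument. The paper combines the definition of $\alpha_0$ with Lemma~\ref{lem:alpha_0_lower_bound} (also citing Theorem~\ref{thm:main}(i)), which is exactly your ``alternative without Step 1'': every $\alpha<C(n)\max_{\partial\Omega}H$ has $S_\alpha<A$, so it lies outside $\{\beta\ge0:S_\beta=A\}$. Your main contradiction route, via $\alpha_0<+\infty$, $S_{\alpha_0}=A$ and monotonicity, is also valid but more than is needed.
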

\begin{proof}
This proof can be directly obtained based on  Theorem~\ref{thm:main}\, \normalfont(i), the definition of $\alpha_0$ and Lemma \ref{lem:alpha_0_lower_bound}.
\end{proof}
\begin{Lemma}\label{lem:existence_at_threshold}
	 Problem~\eqref{eq:main} with $\alpha=\alpha_0$
	admits a least energy solution if $\alpha_{0}>C(n) \underset{\partial \Omega}{\max} H $.
\end{Lemma}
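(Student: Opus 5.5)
We approximate $\alpha_0$ from below by least energy solutions and show that, because $\alpha_0$ strictly exceeds $C(n)\max_{\partial\Omega}H$, the approximating sequence cannot lose compactness.

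Take $\alpha_k\nearrow\alpha_0$. By Theorem~\ref{thm:main}(i) and Remark~\ref{rmk:alpha0_positive} there are least energy solutions $u_k\in\mathcal N_{\alpha_k}$ with $I_{\alpha_k}(u_k)=S_{\alpha_k}<A$. By continuity (Lemma~\ref{lem:monotone_continuous}) we have $S_{\alpha_k}\to S_{\alpha_0}=A$. As in \eqref{equa3.3}, $\{u_k\}$ is bounded in $H^1(\Omega)$, so after passing to a subsequence $u_k\rightharpoonup u$. We then split into two cases.

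\emph{Case $u\not\equiv0$.} Passing to the limit in the weak formulation \eqref{eq1.2} with $\alpha_k\to\alpha_0$ shows that $u$ is a nonnegative weak solution of \eqref{eq:main} at $\alpha=\alpha_0$, so $u\in\mathcal N_{\alpha_0}$. By \eqref{eq:Nehari_energy}, $I_\alpha$ on $\mathcal N_\alpha$ equals $\frac{1}{2(n-1)}\|u\|^2+\frac{n-2}{2n(n-1)}\|u\|_{2^*,\Omega}^{2^*}$, which is weakly lower semicontinuous. Hence
\[
A=S_{\alpha_0}\le I_{\alpha_0}(u)\le\liminf_k I_{\alpha_k}(u_k)=A .
\]
Thus $u$ attains $S_{\alpha_0}$, and positivity follows from the strong maximum principle.

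\emph{Case $u\equiv0$.} We derive a contradiction. All hypotheses of Lemma~\ref{lem:concentration_locus} hold with $\alpha_0<\infty$. We therefore get $M_k\to\infty$, $\delta_k\to0$, $P_k\in\partial\Omega$ with $P_k\to P_0$, and $\alpha_k\delta_k\to0$ (automatic here). Lemma~\ref{lem:bubble_profile} gives $\|\nabla(u_k-V_{\delta_k,P_k})\|_2\to0$. Next we apply Lemma~\ref{lem:final_estimate} to write $u_k=b_kV_{\varepsilon_k,\xi_k}+\phi_k$ with $\varepsilon_k/\delta_k\to1$ and $\xi_k\to P_0$. Its proof used $\alpha_k\to\infty$ only to absorb terms, so we redo that step with $\alpha_k$ bounded, and this is the main obstacle. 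The remainders are then $O(\varepsilon_k\|\phi_k\|)+O(\|\phi_k\|^2)$. Repeating Lemma~\ref{lem:phi_size} with the coercivity of Lemma~\ref{lem:spectral_gap} (the term $a_{\varepsilon_k}\int V^{2^\sharp-2}\phi^2$ is nonnegative here, so coercivity holds even more easily) gives $\|\phi_k\|=O(\varepsilon_k)$. Hence all errors are $O(\varepsilon_k^2)$, provided the interior and boundary cross terms of order $\varepsilon_k\|\phi_k\|$ from the proof of Lemma~\ref{lem:final_estimate} are handled uniformly. We then obtain
\[
A> S_{\alpha_k}=I_{\alpha_k}(u_k)\ge A+\bigl[B(n)\alpha_k-L(n)H(\xi_k)\bigr]\varepsilon_k+O(\varepsilon_k^2).
\]
We need the expansion of Lemma~\ref{lem:expansion} for $\xi$ near $P_0$, uniformly in $\xi$. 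If $H(\xi_k)\le0$, the curvature term only helps: the sign argument in Lemma~\ref{lem:expansion} does not really require $H>0$, and we would note this.

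It remains to conclude. Since $\alpha_k\to\alpha_0$ and $H(\xi_k)\le\max_{\partial\Omega}H$,
\[
B(n)\alpha_k-L(n)H(\xi_k)\ \ge\ B(n)\bigl(\alpha_0-C(n)\max_{\partial\Omega}H\bigr)-o(1)\ \ge\ c>0
\]
for large $k$, by the hypothesis $\alpha_0>C(n)\max_{\partial\Omega}H$. Dividing by $\varepsilon_k\to0$ gives $0\ge c+O(\varepsilon_k)$, a contradiction. Therefore $u\not\equiv0$, and the first case applies. The delicate points are the uniformity of the expansion in $\xi_k$ and establishing $\|\phi_k\|=O(\varepsilon_k)$ rather than the weaker $O(\alpha_k\varepsilon_k)$ bound. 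The latter holds because the interior source term $\lambda\int V\phi$ is $o(\varepsilon_k\|\phi_k\|)$ and the boundary and $2^*$ error terms are $O(\varepsilon_k\|\phi_k\|)$ by the Hölder estimates already recorded.
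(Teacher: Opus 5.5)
Your argument is sound in outline. In the vanishing case $u\equiv0$ it follows the paper: blow-up via Lemmas~\ref{lem:concentration_locus} and~\ref{lem:bubble_profile}, the decomposition and lower bound of Lemmas~\ref{lem:final_estimate}--\ref{lem:phi_size}, and the positivity of $B(n)\alpha_k-L(n)H(\xi_k)$ forced by $\alpha_0>C(n)\max_{\partial\Omega}H$. You are right that those two lemmas are stated for $\alpha_k\to+\infty$, which the paper cites without comment. However, since $\alpha_k\to\alpha_0\in(0,\infty)$, the bounds $O(\alpha_k\varepsilon_k)$ and $O(\varepsilon_k)$ coincide, so there is nothing to sharpen there.

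You genuinely depart from the paper in the non-vanishing case. The paper writes $v_k=u_k-u$, splits $I_{\alpha_k}(u_k)=I_{\alpha_0}(u)+\Psi_\Omega(v_k)+o(1)$ by Br\'ezis--Lieb, checks that $\sup_{t>0}\Psi_\Omega(tv_k)$ stays below $A$, and uses Lemma~\ref{lem:local_compactness} to force $v_k\to0$ strongly before reading off $I_{\alpha_0}(u)=A$. You instead use that on $\mathcal N_\alpha$ the energy equals $\frac{1}{2(n-1)}\|u\|^2+\frac{n-2}{2n(n-1)}\|u\|_{2^*,\Omega}^{2^*}$. This expression is independent of $\alpha$, because both $2^\sharp$-terms are eliminated, and it is weakly lower semicontinuous, so $A=S_{\alpha_0}\le I_{\alpha_0}(u)\le\liminf_k I_{\alpha_k}(u_k)=A$. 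Your route is shorter, needs no concentration-compactness input, and still gives strong convergence a posteriori, since equality forces $\|u_k\|\to\|u\|$. The paper's splitting argument is what one needs when, as in Lemma~\ref{lem:existence-below-threshold}, non-vanishing of the weak limit is not known in advance.

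One caution on the step you flag as delicate. The ``H\"older estimates already recorded'' do not make the linear terms $O(\varepsilon_k\|\phi_k\|)$. The integral $\int_{\partial\Omega}V_{\varepsilon,\xi}^{2^\sharp}\,d\sigma$ does not decay as $\varepsilon\to0$, so H\"older gives only $\int_{\partial\Omega}V_{\varepsilon_k,\xi_k}^{2^\sharp-1}\phi_k\,d\sigma=O(\|\phi_k\|)$, and the same holds for $\int_\Omega V_{\varepsilon_k,\xi_k}^{2^*-1}\phi_k\,dx$. Moreover, passing from $I_{\alpha_k}(b_kV_{\varepsilon_k,\xi_k})$ to $I_{\alpha_k}(V_{\varepsilon_k,\xi_k})$ using only $b_k\to1$ does not give a lower bound. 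The fibre $t\mapsto I_{\alpha_k}(tV_{\varepsilon_k,\xi_k})$ drops quadratically away from its maximum near $t=1$. The paper's proof of Lemma~\ref{lem:final_estimate} has both slips, so this is not a departure from its route, but your write-up should repair them.

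Both are fixed at once. Since $u_k\in\mathcal N_{\alpha_k}$, one has $I_{\alpha_k}(u_k)\ge I_{\alpha_k}(u_k/b_k)=I_{\alpha_k}(V+\psi_k)$, with $V:=V_{\varepsilon_k,\xi_k}$ and $\psi_k:=\phi_k/b_k\perp T_{1,V}(\mathcal M)$. Because $V_1$ is a translated Aubin--Talenti bubble, $-\Delta V=V^{2^*-1}$ on all of $\mathbb R^n$. Combined with $\int_\Omega\nabla V\cdot\nabla\psi_k\,dx=0$, this yields
\[
\langle I'_{\alpha_k}(V),\psi_k\rangle=\int_{\partial\Omega}\bigl(\partial_\nu V-V^{2^\sharp-1}\bigr)\psi_k\,d\sigma+\lambda\int_\Omega V\psi_k\,dx+\alpha_k\int_\Omega V^{2^\sharp-1}\psi_k\,dx=O(\varepsilon_k\|\psi_k\|).
\]
The bound holds because the curvature defect $\partial_\nu V-V^{2^\sharp-1}$ has $L^{2(n-1)/n}(\partial\Omega)$-norm $O(\varepsilon_k)$ when $n\ge5$. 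A second-order expansion together with Lemma~\ref{lem:spectral_gap}, in the bounded-$\alpha_k$ form you describe, then gives
\[
I_{\alpha_k}(u_k)\ge I_{\alpha_k}(V)-C\varepsilon_k\|\psi_k\|+c\|\psi_k\|^2+o(\|\psi_k\|^2)\ge I_{\alpha_k}(V)-C'\varepsilon_k^2 .
\]
Lemma~\ref{lem:expansion} then finishes the contradiction; it is uniform in $\xi$ and, as you note, valid without $H(\xi)>0$. No separate bound on $\phi_k$ or $b_k-1$ is needed.
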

\begin{proof}
	Let $\{\alpha_k\}$ be any sequence satisfying
	$\alpha_k\nearrow\alpha_0$ as $k\to\infty$.
	For each $k$, let $u_k$ be a least-energy solution of
	\eqref{eq:main}, so that
$
	I_{\alpha_k}(u_k)=S_{\alpha_k}.
$
	Arguing as in the proof of
	Lemma~\ref{lem:existence-below-threshold}, we see that
	$\{u_k\}$ is bounded in $H^1(\Omega)$.
	Passing to a subsequence if necessary, we may assume that
	\[
	u_k\rightharpoonup u
	\ \text{in }H^1(\Omega)
	\quad\text{as }k\to\infty.
	\]
	
	We claim that $u\not\equiv0$.
	Assume, by contradiction, that $u\equiv0$.
	If $\{u_k\}$ were uniformly bounded in $L^\infty(\Omega)$, then
	$|u_k|^{2^*}\le C^{2^*-2}|u_k|^2,$
	which implies
	$
	\|u_k\|_{2^*,\Omega}\to0
$ 
	since $u_k\to0$ in $H^1(\Omega)$ as $k\to\infty$.
	Similarly,
$
	\|u_k\|_{2^\sharp,\partial\Omega}\to0.
$
	Using the Nehari identity,
	\[
	\|u_k\|^2+\alpha_k\|u_k\|_{2^\sharp,\Omega}^{2^\sharp}
	=\|u_k\|_{2^*,\Omega}^{2^*}
	+\|u_k\|_{2^\sharp,\partial\Omega}^{2^\sharp},
	\]
	we conclude that $\|u_k\|\to0$ as $k\to\infty$, contradicting the uniform lower bound
	$\|u_k\|\ge\delta$ on $\mathcal N_{\alpha_k}$.
	Therefore,
	\[
	\|u_k\|_{L^\infty(\Omega)}\to\infty\qquad\text{as }k\to\infty
	\]
	along a subsequence.
	
	Consequently, Lemmas~\ref{lem:final_estimate}
	and~\ref{lem:phi_size} apply and yield
	\[
	I_{\alpha_k}(u_k)
	\ge
	A+\bigl[B(n)\alpha_k-L(n)H(P_k)\bigr]\varepsilon_k
	+O(\varepsilon_k^2)
	+o(\alpha_k\varepsilon_k).
	\]
	Since
$
	H(P_k)\le\max_{\partial\Omega}H
$
	and
$
	\alpha_k\to\alpha_0>C(n)\max_{\partial\Omega}H$
as $k\to\infty,
$
	there exists $k_0$ such that
	\[
	B(n)\alpha_k-L(n)H(P_k)>0
	\qquad\text{for all }k\ge k_0.
	\]
	Hence,
	\[
	I_{\alpha_k}(u_k)>A
	\]
	for all sufficiently large $k$, contradicting
	\[
	S_{\alpha_k}=I_{\alpha_k}(u_k)<A.
	\]
	Therefore, $u\not\equiv0$.
	
	 It remains to prove that $u$ is a least energy solution at the threshold $\alpha_0$.
	 Set $v_k:=u_k-u$. Then $v_k\rightharpoonup0$ weakly in $H^1(\Omega).$
	 Passing to the limit in
	 \[
	 \langle I'_{\alpha_k}(u_k),\varphi\rangle=o(1),
	 \qquad
	 \varphi\in H^1(\Omega),
	 \]
	 yields
	 \[
	 \langle I'_{\alpha_0}(u),\varphi\rangle=0
	 \qquad
	 \text{for every }\varphi\in H^1(\Omega).
	 \]
	 Since $u\not\equiv0$, it follows that
$
	 u\in\mathcal N_{\alpha_0}.
$
	 The Nehari identity then yields
	 \[
	 I_{\alpha_0}(u)
	 =
	 \frac{1}{2(n-1)}\|u\|^{2}
	 +
	 \frac{n-2}{2n(n-1)}
	 \|u\|_{2^*,\Omega}^{2^*}
	 >0.
	 \]
	 	By Lemma~\ref{lem:monotone_continuous} and
	 	$\alpha_k\nearrow\alpha_0$, we have
	 	\[
	 	S_{\alpha_k}\to S_{\alpha_0}=A\qquad\text{as }k\to\infty.
	 	\]
	 	Moreover, the Brezis--Lieb lemma together with
	 $
	 	\|v_k\|_{2^\sharp,\Omega}^{2^\sharp}=o(1)
	 $
	 	yields
	 	\begin{equation}\label{eq:I-split-prop51}
	 		I_{\alpha_k}(u_k)
	 		=
	 		I_{\alpha_0}(u)
	 		+
	 		\Psi_\Omega(v_k)
	 		+
	 		o(1).
	 	\end{equation}
	 	Since
	 $I_{\alpha_k}(u_k)=S_{\alpha_k}\to A$ as $k\to\infty$
	 	and
	 	$
	 	I_{\alpha_0}(u)>0,
	 $
	 	it follows from \eqref{eq:I-split-prop51} that
\[
\Psi_\Omega(v_k)
\to
A-I_{\alpha_0}(u)
<A.
\]
	 	From $J_{\alpha_k}(u_k)=0$, $J_{\alpha_0}(u)=0$, and the
	 	Brezis--Lieb identities, we obtain
	 	\[
	 	\langle\Psi'_\Omega(v_k),v_k\rangle
	 	=\|\nabla v_k\|_2^2
	 	-\|v_k\|_{2^*}^{2^*}
	 	-\|v_k\|_{2^\sharp,\partial\Omega}^{2^\sharp}
	 	=o(1).
	 	\]
	 	Let $t_k>0$ be the unique critical point of
	 	$t\mapsto\Psi_\Omega(tv_k)$.
	 	Arguing exactly as in the proof of
	 	Lemma~\ref{lem:existence-below-threshold}, the critical point equation,
	 	together with
	 	\[
	 	\langle\Psi'_\Omega(v_k),v_k\rangle=o(1),
	 	\]
	 	implies that
	 	$t_k\to1$ as $k\to\infty$. Therefore,
	 	\[
	 	\liminf_{k\to\infty}\sup_{t>0}\Psi_\Omega(tv_k)
	 	=
	 	\liminf_{k\to\infty}\Psi_\Omega(t_kv_k)
	 	=
	 	\liminf_{k\to\infty}\Psi_\Omega(v_k)
	 	=
	 	A-I_{\alpha_0}(u)
	 	<
	 	A.
	 	\]
	Lemma~\ref{lem:local_compactness} then implies that
	\[
	v_k\to0
	\qquad\text{strongly in }H^1(\Omega).
	\]
	 	Therefore
	 	\[
	 	I_{\alpha_0}(u)
	 	=\lim_{k\to\infty}I_{\alpha_0}(u_k)
	 	=\lim_{k\to\infty}\Bigl[S_{\alpha_k}
	 	+\tfrac{\alpha_0-\alpha_k}{2^\sharp}
	 	\|u_k\|_{2^\sharp,\Omega}^{2^\sharp}\Bigr]
	 	=A=S_{\alpha_0},
	 	\]
	 	so $u$ is a least energy solution of~\eqref{eq:main} with $\alpha=\alpha_0$.
	
\end{proof}
As a result, we obtained the proof of Theorem \ref{thm:main}\,(iii).
\begin{proof}[Proof of Theorem \ref{thm:main}
  \normalfont(iii)]
  The proof can be directly derived from Lemma \ref{lem:existence_at_threshold}.
  \end{proof}
\begin{Lemma}
	Suppose $\alpha_{0}=C(n) \max\limits_{\partial\Omega}H.$
	Let $\{u_k\}$ be any sequence in which
		$u_k$ is a least energy solution of~\eqref{eq:main}
		with $\alpha=\alpha_k\nearrow\alpha_0$ as $k\to\infty$.
		Then at least one of the following holds:
	\begin{enumerate}
		\item[\textup{(i)}]
		There exists a least energy solution of~\eqref{eq:main}
		with $\alpha=\alpha_0$.
		\item[\textup{(ii)}]
		The sequence $\{u_k\}$ has a subsequence
			$\{u_{k_j}\}$ with
		$u_{k_j}\rightharpoonup 0$,\,
		$\|u_{k_j}\|_{L^{\infty}(\Omega)}\to\infty,$
		and every limit point of the maximum points
		of $u_{k_j}$
		lies in
		$\{\xi\in\partial\Omega:
		H(\xi)=\max\limits_{\partial\Omega}H\}.$
	\end{enumerate}
\end{Lemma}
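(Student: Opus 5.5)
The plan is to rerun the argument of Lemma~\ref{lem:existence_at_threshold}, keeping only the part that did not use the strict inequality $\alpha_0>C(n)\max_{\partial\Omega}H$. As there, $\{u_k\}$ is bounded in $H^1(\Omega)$, so along a subsequence $u_k\rightharpoonup u$. We split into two cases according to whether $u\not\equiv0$ or $u\equiv0$.

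\emph{Case $u\not\equiv0$.} The second half of the proof of Lemma~\ref{lem:existence_at_threshold} never used the strict inequality, so it applies verbatim:
\begin{itemize}
\item[(a)] passing to the limit in $\langle I'_{\alpha_k}(u_k),\varphi\rangle=0$ gives $u\in\mathcal N_{\alpha_0}$ and $I_{\alpha_0}(u)>0$;
\item[(b)] the Brezis--Lieb splitting \eqref{eq:I-split-prop51}, together with $S_{\alpha_k}\to S_{\alpha_0}=A$ (Lemma~\ref{lem:monotone_continuous} and Remark~\ref{rmk:alpha0_positive}), gives $\limsup\sup_{t>0}\Psi_\Omega(tv_k)<A$ for $v_k=u_k-u$;
\item[(c)] Lemma~\ref{lem:local_compactness} then yields $v_k\to0$ strongly, so $I_{\alpha_0}(u)=A=S_{\alpha_0}$.
\end{itemize}
This is alternative (i).

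\emph{Case $u\equiv0$.} The first half of the proof of Lemma~\ref{lem:existence_at_threshold} shows $\|u_k\|_{L^\infty}\to\infty$: otherwise the Nehari identity and $\|u_k\|\ge\delta$ give a contradiction. Since $\alpha_k\to\alpha_0\in(0,\infty)$, $u_k\rightharpoonup0$, $S_{\alpha_k}<A$ and $S_{\alpha_k}\to A$, the hypotheses of Lemma~\ref{lem:concentration_locus} hold. It gives maximum points $P_k\in\partial\Omega$, $\delta_k\to0$ and $\alpha_k\delta_k\to0$. Lemma~\ref{lem:bubble_profile} then gives $\|\nabla(u_k-V_{\delta_k,P_k})\|_2\to0$.

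Pass to a further subsequence with $P_k\to P_0\in\partial\Omega$. It remains to show $H(P_0)=\max_{\partial\Omega}H$. We use the expansion of Lemmas~\ref{lem:final_estimate} and~\ref{lem:phi_size}, with $\varepsilon_k/\delta_k\to1$ and $|\xi_k-P_k|=O(\delta_k)$, so $\xi_k\to P_0$.

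\emph{Main obstacle.} Those lemmas are stated for $\alpha_k\to+\infty$, whereas here $\alpha_k$ stays bounded. Lemma~\ref{lem:spectral_gap} only needs $\varepsilon_k a_{\varepsilon_k}\to0$, and bounded $\alpha_k$ makes the perturbation even smaller. The estimate of Lemma~\ref{lem:phi_size} becomes $\|\phi_k\|=O(\varepsilon_k)$, since the term $O(\varepsilon_k\|\phi_k\|)$ is no longer absorbed by $\alpha_k\varepsilon_k$, but both terms are of the same order. The remainders in \eqref{eq:final_estimate} are then $O(\varepsilon_k^2)$, and we obtain
\[
S_{\alpha_k}=I_{\alpha_k}(u_k)\ge A+\bigl[B(n)\alpha_k-L(n)H(\xi_k)\bigr]\varepsilon_k+O(\varepsilon_k^2).
\]
Because $S_{\alpha_k}<A$, dividing by $\varepsilon_k$ and letting $k\to\infty$ gives $B(n)\alpha_0\le L(n)H(P_0)$, that is, $H(P_0)\ge\alpha_0/C(n)=\max_{\partial\Omega}H$. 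Hence $H(P_0)=\max_{\partial\Omega}H$.

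The same argument applies to every convergent subsequence of the maximum points, so every limit point lies in the set of maximal mean curvature. This is alternative (ii). The delicate point to verify is this uniform control of the $O(\varepsilon_k^2)$ remainders for bounded $\alpha_k$ in Lemma~\ref{lem:expansion} and Lemma~\ref{lem:final_estimate}. In particular the $H(\xi)>0$ hypothesis of Lemma~\ref{lem:expansion} holds near $P_0$, since $\max H\ge C(n)^{-1}\alpha_0>0$ and we need it only after knowing $H(P_0)$ is close to the maximum; if needed we first argue that $H(P_0)\le0$ would already contradict the inequality above.
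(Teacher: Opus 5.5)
Your proposal follows the paper's proof essentially step for step. You make the same split on the weak limit $u$ and reuse the compactness argument of Lemma~\ref{lem:existence_at_threshold} when $u\not\equiv0$; when $u\equiv0$ you run the same chain Lemma~\ref{lem:concentration_locus}, Lemma~\ref{lem:bubble_profile}, Lemma~\ref{lem:final_estimate} to reach $B(n)\alpha_0\le L(n)H(P_0)$, which the paper phrases instead as a contradiction with $S_{\alpha_k}<A$ when $H(P^*)<\max_{\partial\Omega}H$. Your explicit check that Lemma~\ref{lem:phi_size} still gives $\|\phi_k\|=O(\varepsilon_k)$ for bounded $\alpha_k$, so all remainders are $O(\varepsilon_k^2)$, is correct and fills in a step the paper passes over by citing those lemmas (stated for $\alpha_k\to+\infty$) directly.
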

	\begin{proof}
	Since $\alpha_k<\alpha_0$ and
	$\alpha_k\to\alpha_0$ as $k\to\infty$,
	Remark~\ref{rmk:alpha0_positive} yields
	$S_{\alpha_k}<A$.
	Hence, by Lemma~\ref{lem:existence-below-threshold}, there exists
	$u_k\in\mathcal N_{\alpha_k}$ satisfying
	\[
	I_{\alpha_k}(u_k)=S_{\alpha_k}.
	\]
	Arguing as in the proof of
	Lemma~\ref{lem:existence_at_threshold}, we conclude that
	$\{u_k\}$ is bounded in $H^1(\Omega)$.
	Passing to a subsequence if necessary, we may assume that $u_k\rightharpoonup u$
weakly in $H^1(\Omega).$

	\medskip
	\noindent
	\textit{Case A: $u\neq0$.}
	Since $\alpha_k\to\alpha_0$ and
	$u_k\rightharpoonup u\neq0$, the compactness argument used in
	Lemma~\ref{lem:existence_at_threshold} applies. Hence
	\[
	u_k\to u\quad\text{strongly in }H^1(\Omega),
	\]
	and passing to the limit in the Euler--Lagrange equation gives
$
	u\in\mathcal N_{\alpha_0}.
$
	Moreover,
	\[
	I_{\alpha_0}(u)=\lim_{k\to\infty}I_{\alpha_k}(u_k)=A.
	\]
	Hence $u$ is a least energy solution at $\alpha_0$,
	which proves~(i).
	
	\medskip
	\noindent
	\textit{Case B: $u=0$.}
	If $\{u_k\}$ were uniformly bounded in $L^\infty(\Omega)$,
	then the argument in the proof of
	Lemma~\ref{lem:existence_at_threshold},
	together with the Nehari identity,
	would imply that
	$\|u_k\|\to0$ as $k\to\infty$,
	contradicting the uniform lower bound
	$\|u_k\|\ge\delta>0$.
	Hence,
	\[
	\|u_k\|_{L^\infty(\Omega)}\to\infty
	\qquad\text{as }k\to\infty.
	\]
	Passing to a further subsequence,
	still denoted by $\{u_k\}$,
	Lemmas~\ref{lem:concentration_locus}
	and~\ref{lem:bubble_profile}
	yield points
	$P_k\in\partial\Omega$
	and scales
	$\delta_k\to0$
	such that
	\[
	\|\nabla(u_k-V_{\delta_k,P_k})\|_{2,\Omega}\to0
	\qquad\text{as }k\to\infty.
	\]
	Since
	$\alpha_k\to\alpha_0<+\infty$
	and
	$\delta_k\to0$,
	we have
	\[
	\alpha_k\delta_k\to0
	\qquad\text{as }k\to\infty.
	\]
	Applying Lemma~\ref{lem:final_estimate},
	we obtain
	$\varepsilon_k>0$ and
	$\xi_k\in\partial\Omega$
	with
	\[
	\frac{\varepsilon_k}{\delta_k}\to1
	\qquad\text{as }k\to\infty,
	\]
	and
	\[
	S_{\alpha_k}
	=
	I_{\alpha_k}(u_k)
	\ge
	A+
	\bigl[
	B(n)\alpha_k
	-
	L(n)H(P_k)
	\bigr]
	\varepsilon_k
	+
	O(\varepsilon_k^2)
	+
	o(\alpha_k\varepsilon_k).
	\]
	
		We claim that every accumulation point of
	$\{P_k\}$ belongs to $\left\{
\xi\in\partial\Omega:
H(\xi)=\max\limits_{\partial\Omega}H
\right\}.$
	Indeed, suppose that, along a subsequence,
	$P_k\to P^*\in\partial\Omega$
	with
$H(P^*)<\max\limits_{\partial\Omega}H.$	Since
\[
\alpha_k\to\alpha_0
=
C(n)\max_{\partial\Omega}H
\qquad\text{as }k\to\infty,
\]
and $P_k\to P^*$, we have
	\[
	B(n)\alpha_k
	-
	L(n)H(P_k)
	\to
	L(n)
	\bigl(
	\max_{\partial\Omega}H-H(P^*)
	\bigr)
	>0.
	\]
	Hence,
	\[
	S_{\alpha_k}
	\ge
	A+c\,\varepsilon_k
	>A
	\]
	for all sufficiently large $k$,
	contradicting
	$S_{\alpha_k}<A$. Therefore, every accumulation point of $\{P_k\}$ is a maximum point of
	the mean curvature on $\partial\Omega$.
\end{proof}

\noindent {\bf Data Availability} Data sharing is not applicable to this article as no new data were created or analyzed in this study.

\section*{Declarations}

\vspace{0.5em} 
\noindent {\bf Conflict of interest} The authors declare that they have no Conflict of interest.

\end{document}